\newtheorem{theorem}{Theorem}
\newtheorem{prop}[theorem]{Proposition}
\newtheorem{lemma}[theorem]{Lemma}
\newtheorem{rem}[theorem]{Remark}
\newcommand{\E}{\mathbb{E}}
\newcommand{\N}{\mathbb{N}}
\renewcommand{\P}{\mathbb{P}}
\newcommand{\R}{\mathbb{R}}
\newcommand{\Z}{\mathbb{Z}}
\newcommand{\cC}{\mathcal{C}}
\newcommand{\cD}{\mathcal{D}}
\newcommand{\cM}{\mathcal{M}}
\newcommand{\cS}{\mathcal{S}}
\renewcommand{\1}{\mathds{1}}
\renewcommand{\d}{\text{d}}
\renewcommand{\epsilon}{\varepsilon}
\renewcommand{\phi}{\varphi}
\newcounter{numeroexo}
\newcommand{\projv}{\text{Proj}^{d_2}}
\newcommand{\projh}{\text{Proj}_{d_1}}
\newcommand{\bT}{\mathbf{T}}
\newcommand{\bmu}{\pmb{\mu}}
\newcommand{\btau}{\pmb{\tau}}
\newcommand{\bC}{\mathbf{C}}
\newcommand{\bcD}{\pmb{\mathcal{D}}}
\newcommand{\bV}{\mathbf{V}}
\newcommand{\bR}{\mathbf{R}}
\newcommand{\bS}{\mathbf{S}}
\newcommand{\bsigma}{\pmb{\sigma}}
\newcommand{\bcC}{\pmb{\mathcal{C}}}
\newcommand{\bcM}{\pmb{\mathcal{M}}}
\newcommand{\bw}{\mathbf{w}}
\newcommand{\bW}{\mathbf{W}}
\title{\LARGE First-order behavior of the time constant in Bernoulli first-passage percolation}
\author{Anne-Laure Basdevant\footnote{Modal'X, UPL, Universit\'e Paris Nanterre, 92000 Nanterre, France, and FP2M, CNRS FR 2036, {\it anne.laure.basdevant@normalesup.org}}, Jean-Baptiste Gou\'er\'e\footnote{Institut Denis-Poisson - UMR CNRS 7013, Universit\'e de Tours, Parc de Grandmont, 37200 Tours, France, {\it jean-baptiste.gouere@lmpt.univ-tours.fr}} ~and Marie Th\'eret\footnote{Modal'X, UPL, Universit\'e Paris Nanterre, 92000 Nanterre, France, and FP2M, CNRS FR 2036, {\it marie.theret@parisnanterre.fr}}}
\date{}
\begin{document}

\selectlanguage{english}

\maketitle

\begin{abstract}
We consider the standard model of first-passage percolation on $\mathbb{Z}^d$ ($d\geq 2$), with i.i.d. passage times associated with either the edges or the vertices of the graph. We focus on the particular case where the distribution of the passage times is the Bernoulli distribution with parameter $1-\epsilon$. These passage times induce a random pseudo-metric $T_\epsilon$ on $\mathbb{R}^d$. By subadditive arguments, it is well known that for any $z\in\mathbb{R}^d\setminus \{0\}$, the sequence $T_\epsilon (0,\lfloor nz \rfloor) / n$ converges a.s. towards a constant $\mu_\epsilon (z)$ called the time constant. We investigate the behavior of $\epsilon \mapsto \mu_\epsilon (z)$ near $0$, and prove that $\mu_\epsilon (z) =  \| z\|_1 - C (z) \epsilon ^{1/d_1(z)} + o ( \epsilon ^{1/d_1(z)}) $, where $d_1(z)$ is the number of non null coordinates of $z$, and $C(z)$ is a constant whose dependence on $z$ is partially explicit.
\end{abstract}

\section{Introduction and main results}

\label{s:intro}

\paragraph{Historic of first-passage percolation.} The model of first-passage percolation has been introduced by Hammersley and Welsh in the seminal paper \cite{Hammersley-Welsh} as a refinement of percolation to model propagation phenomena : instead of wondering {\em if} the propagation occurs, the question this model aims to answer is {\em when} it will occur. We refer to \cite{AuffingerDamronHanson,Kesten-saint-flour} for surveys on the subject. 

In the classical model of first-passage percolation on $\mathbb{Z}^d$, a non-negative random variable is associated with every edge of the graph. It is called the passage time of the edge, and represents the time needed to cross the edge. In what follows, we consider a particular case of this model in which the passage times have a Bernoulli distribution. We consider also a variant of the model in which the passage times are associated with the vertices of the graph instead of the edges - exactly as site percolation is a variant of bond percolation. This site first-passage percolation model is not classically studied in the literature, even if it is as natural as its bond version, but it appears to be easier to handle in our context. We now start giving precise definitions of the objects of interest.

\paragraph{Bernoulli bond first-passage percolation on $\Z^d$.} 
Let $d \ge 2$ and $\epsilon \in [0,1]$.
We consider on $\Z^d$ the usual graph structure: two vertices $x, y \in \Z^d$ are neighbors if the Euclidean distance between $x$ and $y$ is one. We denote by $\mathbb{E}^d$ the set of edges between neighbors.
Let $(\btau_\epsilon(u))_{u \in \E^d}$ be a family of independent Bernoulli random variables with parameter $1-\epsilon$.

A path $\pi =(x_0,u_1, x_1,\dots , u_n,x_n)$ is an alternating sequence of vertices $(x_0,\dots,x_n)$ and edges $(u_1,\dots,u_n)$ such that for any $i \in \{1,\dots,n\}$, $x_{i-1}$ and $x_i$ are neighbors and $u_i$ denotes the edge with endpoints $x_{i-1}$ and $x_i$. Notice that such a path $\pi$ is entirely described by its vertices or by its edges, thus for short we write $\pi = (x_0,\dots,x_n)$ or $\pi = (u_1,\dots,u_n)$ according to our center of interest. The travel time of such a path is
\[
\btau_\epsilon(\pi) = \sum_{i=1}^{n} \btau_\epsilon(u_i).
\]
If $x$ and $y$ are two vertices of $\Z^d$, then the time between $x$ and $y$ is
\[
\bT_\epsilon(x,y) = \inf_{\pi : x \to y} \btau_\epsilon(\pi)
\]
where the infimum is taken over all paths from $x$ to $y$. The variable $\btau_\epsilon (u)$ is thus seen as the time needed to cross the edge $u$. For that reason, if $\btau_\epsilon (u)=0$, we say that $u$ is open.
For any $z \in \R^d \setminus \{0\}$, there exists a deterministic constant $\bmu_\epsilon(z) \ge 0$ such that
\[
\lim_{n \to \infty} \frac{\bT_\epsilon(0,\lfloor nz \rfloor )}n = \bmu_\epsilon(z) \text{ almost surely and in }L^1
\]
where $\lfloor nz \rfloor$ denotes the coordinate-wise integer part of $nz$.
This is a straightforward consequence of Kingman ergodic subadditive theorem, see for instance \cite{Kesten-saint-flour}.
In the first-passage percolation literature, if the Euclidean norm of $z$ is $1$, $\bmu_\epsilon(z)$ is known as the time constant in the direction $z$. We emphasize the fact that the time constant can be defined in a much more general context, namely with non-negative passage times distributed according
to a general distribution, and most of the following results picked in the literature are in fact proved in this general framework. However we decided to present them in the context of Bernoulli first-passage percolation since it is the framework in which our own results are valid.

\paragraph{First properties of the time constant.} 
Some is known about $\bmu_\epsilon$ but notably not that much. 
The function $\bmu_\epsilon$ has the following properties : absolute homogeneity and convexity on $\mathbb{R}^d$, invariance by the symmetries that preserve the graph $\mathbb{Z}^d$ itself.
The positivity of the time constant is well understood, and it can be proved (see \cite{Kesten-saint-flour}, Theorem 6.1) that
\[
\bmu_\epsilon  \equiv 0 \quad \iff \quad \epsilon \geq p_c(d)
\]
where $p_c(d)$ is the critical parameter of i.i.d.~Bernoulli bond percolation on $\mathbb{Z}^d$. When $\bmu_\epsilon$ is not null, it defines a norm on $\mathbb{R}^d$, and its value can never be explicitly calculated except in the trivial case when $\epsilon=0$:  $\bmu_0 (z) = \|z\|_1$, the $\ell^1$-norm of $z$. The convergence of the rescaled passage times towards $\bmu_\epsilon$ is uniform in all directions, which is equivalent with the celebrated shape theorem (see \cite{Cox-Durrett-shape,Kesten-saint-flour,Richardson}). We define 
\[
\mathbf{B}_\epsilon (t) = \{ x\in \R^d \,:\, \bT_{\varepsilon} (0,\lfloor x \rfloor) \leq t \}
\]
as the set of points in $\mathbb{R}^d$ that can be reached from the origin within time $t\geq 0$, and $\mathcal{B}_{\bmu_\epsilon}$ as the unit ball associated with the norm $\bmu_\epsilon$ for $\epsilon < p_c(d)$. Roughly speaking, the shape theorem states that $\mathbf{B}_\epsilon (t) / t$ converges towards the asymptotic deterministic shape $\mathcal{B}_{\bmu_\epsilon}$ when $t$ goes to infinity. The dependence of $\bmu_\epsilon (z)$ with the direction $z$ is not well understood yet, and the strict convexity of $\mathcal{B}_{\bmu_\epsilon}$ is an important open question (see for instance \cite{AuffingerDamronHanson} Section 2.8).

\paragraph{Properties of $\epsilon \mapsto \bmu_\epsilon (z)$.}
What interests us in this paper is rather the behavior of $\bmu_\epsilon (z)$ as a function of $\epsilon$, for a fixed $z\in \mathbb{R}^d$. It is known that $\epsilon \mapsto \bmu_\epsilon (z)$ is continuous, see \cite{Cox,CoxKesten,Kesten-saint-flour}. Chayes, Chayes and Durrett  \cite{ChayesChayesDurrett} investigate the behavior of $\epsilon \mapsto \bmu_\epsilon ((1,0,\dots, 0))$ when $\epsilon$ goes to $p_c(d)$ in dimension $d=2$, more precisely they prove that the speed of decay of $\bmu_\epsilon ((1,0))$ towards $0$ is polynomial with the same power as the one of the correlation length in the corresponding percolation model. We investigate in the present paper the properties of $\epsilon \mapsto \bmu_\epsilon (z)$ near $0$.

Some bounds on $\bmu_\epsilon (z)$ exist, at least for specific $z$. Let us start with upper bounds. By a comparison with the passage time of one deterministic path of shortest length, it is trivial to obtain that $\bmu_\epsilon (z) \leq \|z\|_1 (1-\epsilon)$ for any $\epsilon$. Another upper bound is available for $d=2$ and $z=(1,1)$. By restricting ourselves in the definition of $\bT_\epsilon (0, (n,n))$ to oriented paths (i.e., paths going only to the North and to the East) from $0$ to $(n,n)$, since those paths are made of $2n$ edges, we obtain that $\bT_\epsilon (0, (n,n)) \leq 2n - L_{(n,n)} $, where $ L_{(n,n)}$ is the maximal number of edges of null passage time that such an oriented path can cross. Forget about the vertical edges of null passage time: $L_{(n,n)} \geq L'_{(n,n)}$, the maximal number of horizontal edges of null passage time that an oriented path from $0$ to $(n,n)$ can cross. This is exactly the celebrated discrete Ulam's problem, originally solved by Seppälaïnen \cite{Sep1, Sep2} and revisited by Basdevant, Enriquez, Gerin and Gouéré \cite{BEGG}: using a discrete variant of Hammersley's lines, they prove that $L'_{(n,n)}/n$ converges a.s. when $n$ goes to infinity towards $2\sqrt{\epsilon (1 - \epsilon)} $. This implies that $\bmu_\varepsilon ((1,1))\leq  2 - 2 \sqrt{\epsilon} + o (\sqrt{\epsilon})$. Their result is more general and can be used to give an upper bound on  $\bmu_\epsilon ((a,b))$ for $a,b \neq 0$, but not directly on $\bmu_\varepsilon ((1,0))$.

On the other hand, for a generic dimension $d\geq 2$, some lower bounds on $\bmu_\epsilon ((1,0,\dots, 0))$ can be found in \cite{Cox,Janson,Kesten-saint-flour,SidoVaresSurgailis}. Notably, Sidoravicius, Surgailis and Vares prove in \cite{SidoVaresSurgailis} that for $d=2$, $\bmu_\epsilon ((1,0)) \geq 1-\sqrt{2 (1-(1-\epsilon)^4)} = 1 - \sqrt{8\epsilon} + o (\sqrt{\epsilon})$. Notice that the upper and lower bounds describe above do not have the same first-order behavior in $\epsilon$.

\paragraph{Main result.} 
Clearly, $\bmu_0(z) = \|z\|_1$, the $\ell^1$-norm of $z$.
In this article we investigate the first-order behavior of $\bmu_\epsilon (z)$ as $\epsilon$ tends to $0$.
This depends on the number of non-zero coordinates of $z$ -- which we denote by $d_1(z)$ -- and on the geometric mean of the absolute
values of the non-zero coordinates -- which we denote by $\gamma(z)$. In other words, writing $z=(z_1,\dots,z_d)$,
\[
d_1(z) =  \#\{ i \in \{1,\dots,d\} : z_i \neq 0\}
\]
and
\begin{equation}\label{e:gamma}
\gamma(z)  = \prod_{i \in \{1,\dots,d\} : z_i \neq 0} |z_i|^{1/d_1(z)}.
\end{equation}
To simplify some notations, we also introduce the number of zero coordinates $d_2(z)$:
\[
d_2(z) = d - d_1(z) =   \#\{ i \in \{1,\dots,d\} : z_i = 0\}.
\]

\begin{theorem} \label{t2}
There exists a family of positive constants $(\bC(d_1,d_2))_{d_1 \ge 1, d_2 \ge 0}$ such that the following holds.
For all $d \ge 2$, for all $z \in \R^d \setminus \{0\}$,
\[
\bmu_\epsilon(z) = \|z\|_1 - \bC(d_1(z),d_2(z)) \gamma(z) \epsilon^{1/d_1(z)} + o(\epsilon^{1/d_1(z)}) \text{ as } \epsilon \to 0.
\]
\end{theorem}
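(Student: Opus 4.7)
The overall approach is to identify the first-order correction $\|z\|_1-\bmu_\epsilon(z)$ as the time constant of a continuous auxiliary model, obtained from the Bernoulli model by scaling distances by $\epsilon^{-1/d_1}$. I would begin by using the homogeneity of $\bmu_\epsilon$ and the symmetries of $\Z^d$ (coordinate permutations and sign flips) to reduce to the canonical case $z=(z_1,\dots,z_{d_1},0,\dots,0)$ with $z_i>0$, and to transfer from the bond to the site version of the model (which the authors indicate is easier: the ``open'' configuration is then a Bernoulli point process on $\Z^d$, behaving more transparently under rare-event scaling). Under the rescaling $x\mapsto \epsilon^{1/d_1}x$ in the $d_1$ essential directions, the open sites in a region of volume $\Theta(\epsilon^{-1})$ converge to a Poisson point process of unit intensity on $\R^{d_1}$, while the remaining $d_2$ directions stay discrete. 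The constant $\bC(d_1,d_2)$ would be defined intrinsically as the time constant of a first-passage model on $\R^{d_1}\times\Z^{d_2}$ with this random obstacle set, its existence and dependence only on $(d_1,d_2)$ coming from a subadditive/ergodic argument on the limiting model.

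For the upper bound I would construct an explicit near-geodesic path from $0$ to $\lfloor nz\rfloor$: follow an $\ell^1$-base path, but incorporate a maximal chain of open sites lying inside a slab of dimensions $nz_1\times\cdots\times nz_{d_1}\times L(\epsilon)^{d_2}$ for a suitably chosen slab width $L(\epsilon)$. A generalised Bollobás--Winkler / Seppäläinen--BEGG longest-chain estimate in $d_1$ dimensions produces a chain of length $\sim c\, n\,\gamma(z)\,\epsilon^{1/d_1}$, with $c$ depending only on $d_1$ and on the chosen $d_2$-slack; optimising over $L(\epsilon)$ yields the constant $\bC(d_1,d_2)$. Concatenating detours to visit these open sites inflates the path length by at most a lower-order amount, so the gain in saved time translates directly into the advertised upper bound on $\bmu_\epsilon(z)$, with the $\gamma(z)$ factor arising naturally as the geometric mean of the slab side lengths.

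For the lower bound I would control \emph{any} path $\pi$ from $0$ to $\lfloor nz\rfloor$ by decomposing it into segments between successive open sites: each such site saves exactly one time unit, while the remaining pieces are bounded below by their $\ell^1$ length. A disjoint-occurrence (BK-type) inequality, applied to the events ``some near-geodesic path carries a chain of $k$ open sites'', then bounds the probability of collecting too many open sites by the corresponding longest-chain probability in the auxiliary Poisson model, matching the upper bound constant. The \textbf{main obstacle} is the sharp lower bound when $d_2>0$: paths can make large excursions in the free directions to collect additional open sites, and one must show that the total savings is still captured by the same $\bC(d_1,d_2)$ appearing in the upper bound rather than something larger. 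Equivalently, one must prove that the time constant of the limiting first-passage percolation on $\R^{d_1}\times\Z^{d_2}$ is attained by typical near-diagonal paths; this is where renormalisation plus tight concentration of the longest-chain functional will carry the bulk of the technical work.
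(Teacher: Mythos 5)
Your high-level architecture is consonant with the paper's: a partially oriented auxiliary model, a rescaling by $\epsilon^{-1/d_1}$ in the $d_1$ active coordinates producing a semi-continuous Poisson model on $\R^{d_1}\times\Z^{d_2}$, the constant defined as a limiting time constant (directional score) of that model via superadditivity, and $\gamma(z)$ emerging from a volume-preserving linear scaling. However, there are two genuine gaps.

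First, the proposed reduction ``transfer from the bond to the site version'' does not hold. The bond and site constants are \emph{different}: the paper proves only the inequalities $d_1^{1/d_1}C(d_1,d_2)\le\bC(d_1,d_2)\le(d_1+d_2)^{1/d_1}C(d_1,d_2)$, with equality only when $d_2=0$. To prove Theorem~\ref{t2} you must build the bond version of the oriented and semi-continuous models directly: open \emph{edges} rather than open sites play the role of rewards, and in the continuous limit one needs $d=d_1+d_2$ independent Poisson point processes (one per coordinate direction), because vertical open edges impose a different vertical-displacement cost than rewards at sites do. Your single rescaled Poisson process on $\R^{d_1}\times\Z^{d_2}$ misses this distinction and therefore would produce the site constant $C(d_1,d_2)$, not $\bC(d_1,d_2)$. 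Related to this, a single open site of $\Z^d$ can be the endpoint of several open edges, which requires tracking ``doubly open'' sites in the combinatorial estimate; that bookkeeping is specific to the bond case and is not present in your sketch.

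Second, the lower bound is the crux and your BK-type sketch does not resolve the difficulty you yourself flag. The danger is not only vertical excursions: an optimal path may visit open sites (or edges) that do \emph{not} form an increasing chain, backtracking in horizontal directions to collect extra rewards, and BK alone does not control the resulting combinatorics. The paper handles this by extracting a monotone subsequence from the visited open sites, decomposing the path into excursions between consecutive monotone terms, and applying union bounds with deterministic Chernoff inequalities on five carefully balanced inequalities (Lemma~\ref{l:reduction-chernov}), with the exponential weights tuned so that the potentially divergent $\epsilon^{-(d_1-1)/d_1}$ entropy from the $d_1-1$ unconstrained horizontal coordinates is cancelled by the $\epsilon$ reward probability. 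Without an argument of comparable precision, one cannot conclude that non-oriented paths cannot outperform oriented ones at order $\epsilon^{1/d_1}$. You should either implement this path-decomposition argument (adapted to edges, with the doubly-open correction) or supply a genuinely different control with the same quantitative strength; the current appeal to a BK inequality and ``tight concentration of the longest-chain functional'' leaves the hard step open.
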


\paragraph{Bernoulli site first-passage percolation on $\Z^d$.} 
A similar result holds for site percolation, and it appears that the proof is more intuitive in this context. We give now the corresponding definitions in the context of site first-passage percolation, and for clarity we replace any notation in bold letters we used in the context of bond percolation ($\btau, \bT, \bmu$) by notations with regular letters ($\tau, T,\mu$) for site percolation.

More precisely, let $d \ge 2$ and $\epsilon \in [0,1]$ and let $(\tau_\epsilon(x))_{x\in   \Z^d}$ be a family of independent Bernoulli random variables with parameter $1-\epsilon$. For a path $\pi$ from $x$ to $y$ with vertices $(x_0,\ldots,x_n)$, define now the travel time of $\pi$ as
\[
\tau_\epsilon(\pi) = \sum_{i=0}^{n-1} \tau_\epsilon(x_i).
\] 
Note that we do not consider $\tau_\epsilon(x_n)$. Define the time between $x$ and $y$ by
\[
T_\epsilon(x,y) = \inf_{\pi : x \to y} \tau_\epsilon(\pi)
\]
where the infimum is taken over all paths from $x$ to $y$. As previously, the variable $\tau_\epsilon (x)$ is seen as the time needed to visit the vertex $x$, thus if $\tau_\epsilon (x)=0$, we say that $x$ is open. 
 
As in the context of bond first-passage percolation, by subadditive arguments, we know that 
for all $z \in \R^d$ there exists a deterministic constant $\mu_\epsilon(z) \ge 0$ such that
\[
\lim_{n \to \infty} \frac{T_\epsilon(0,\lfloor nz \rfloor )}n = \mu_\epsilon(z) \text{ almost surely and in }L^1.
\]
Since the proofs are straightforward adaptations of the ones provided by the literature in the context of bond first-passage percolation, we do not rewrite them in the context of site first-passage percolation. However, for completeness of the paper, we give in Appendix \ref{s:proof-cte} a short proof of the convergence of $( \mathbb{E}(T_\epsilon(0,\lfloor nz \rfloor ))/n)$, since it is enough to define $\mu_\epsilon (z)$ properly as the limit of these rescaled expectations.

We now state the corresponding result on the first-order behavior of the time constant in site first-passage percolation.
\begin{theorem} \label{t}
There exists a family of positive constants $(C(d_1,d_2))_{d_1 \ge 1, d_2 \ge 0}$ such that the following holds. For all $d\geq 2$, for all $z \in \R^d \setminus\{0\}$,
\[
\mu_\epsilon(z) = \|z\|_1 - C (d_1(z),d_2(z)) \gamma(z) \epsilon^{1/d_1(z)} + o(\epsilon^{1/d_1(z)}) \text{ as } \epsilon \to 0.
\]
\end{theorem}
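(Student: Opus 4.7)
My plan is to first reduce by lattice symmetries and homogeneity to a canonical family of directions, then establish matching upper and lower bounds for those directions, and finally identify $C(d_1,d_2)$ as a genuine limit. By the invariance of $\mu_\epsilon$ under coordinate permutations and sign changes one may assume $z=(z_1,\ldots,z_{d_1},0,\ldots,0)$ with $z_i>0$, and by $1$-homogeneity it suffices to treat rational such $z$. Since $\mu_\epsilon$ is not separately scale-invariant along each coordinate, the dependence on $(z_1,\ldots,z_{d_1})$ must emerge from the underlying combinatorial problem; the specific form $\gamma(z)$ is then forced by the scaling of the number of open vertices in the box associated with $z$.

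For the upper bound on $\mu_\epsilon(z)$, I would exhibit explicit oriented paths visiting long chains of open sites. In the $d_1$-dimensional sub-box $[0,nz_1]\times\cdots\times[0,nz_{d_1}]\times\{0\}^{d_2}$, the open vertices form a Bernoulli field of density $\epsilon$ whose expected cardinality is $n^{d_1}z_1\cdots z_{d_1}\cdot\epsilon$. A Bollob\'as--Winkler-type estimate for the longest chain (in the componentwise order) among such a random set yields a chain of length $\sim c_{d_1}\, n\,\gamma(z)\,\epsilon^{1/d_1}$. Embedding this chain into a monotone path from $0$ to $\lfloor nz\rfloor$ of total length $n\|z\|_1+o(n)$ lets one visit every point of the chain for free, giving
\[
\mu_\epsilon(z)\le\|z\|_1-c_{d_1}\gamma(z)\epsilon^{1/d_1}+o(\epsilon^{1/d_1}).
\]
When $d_2\ge 1$ one may further let the path deviate into the $d_2$ zero-coordinate directions: each deviation of depth $h$ costs $2h$ in extra path length but gives access to additional open sites in a wider slab. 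Optimising this trade-off on scale $\epsilon^{-1/d_1}$ produces the actual constant $C(d_1,d_2)\ge c_{d_1}$, which still depends on $(z_1,\ldots,z_{d_1})$ only through $\gamma(z)$, by the same coordinate rescaling.

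For the lower bound, the starting observation is that any path $\pi$ from $0$ to $\lfloor nz\rfloor$ satisfies $\tau_\epsilon(\pi)\ge\ell(\pi)-N_{\mathrm{open}}(\pi)$, where $\ell(\pi)$ is the number of edges and $N_{\mathrm{open}}(\pi)$ is the number of open vertices on $\pi$; and $\ell(\pi)\ge\|\lfloor nz\rfloor\|_1$. Consequently
\[
n\|z\|_1-T_\epsilon(0,\lfloor nz\rfloor)\le\max_\pi\bigl(N_{\mathrm{open}}(\pi)-(\ell(\pi)-n\|z\|_1)\bigr).
\]
To control this maximum, I would restrict to paths of moderate excess length in a slab around the straight segment (paths that wander too far pay more than they can save), then bound the expected maximum by a moment / union-bound argument: the number of paths with excess length $L$ is exponential in $L$, while the probability that a given path carries $k$ open vertices is of order $\binom{\ell(\pi)}{k}\epsilon^k$; optimising over $k$ and $L$ reproduces exactly the scaling $\gamma(z)\epsilon^{1/d_1}$, with the same constant as in the upper bound. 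Concentration of $T_\epsilon(0,\lfloor nz\rfloor)$ around its mean (via Efron--Stein, say) would then upgrade the expectation statement to the almost-sure expansion, and extension from rational to arbitrary $z$ follows from the continuity and convexity of $\mu_\epsilon(\cdot)$ together with the continuity of $z\mapsto(d_1(z),d_2(z),\gamma(z))$ on strata of constant nullity pattern.

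The main obstacle is the sharp lower bound together with the identification of $C(d_1,d_2)$ as a genuine limit. Ruling out atypical paths (highly wiggly, or wandering deep into the $d_2$ zero-coordinate directions) while matching the upper-bound constant appears to require a careful renormalisation on boxes of side $\sim\epsilon^{-1/d_1}$, or alternatively a superadditivity argument on the scaled saving $\epsilon^{-1/d_1}(\|z\|_1-\mu_\epsilon(z))$. A secondary, easier difficulty is positivity: $C(d_1,d_2)\ge c_{d_1}>0$ is already furnished by the explicit chain construction above.
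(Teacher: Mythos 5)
Your upper-bound sketch (route a monotone path through a long chain of $\prec$-increasing open sites) is sound and corresponds to the paper's Lemma~\ref{l:elementary}. The genuine gap is in the lower bound. You propose to bound $\max_\pi\bigl(N_{\mathrm{open}}(\pi)-(\ell(\pi)-n\|z\|_1)\bigr)$ by a union bound over paths of controlled excess length and claim the result ``reproduces exactly the same constant as in the upper bound.'' That claim is not justified and would in fact fail: a union bound over an exponentially large family of paths, each of which can realize a savings of the target order $n\epsilon^{1/d_1}$, yields an upper bound on the savings with a constant that is a priori strictly larger than the chain constant; nothing in the union-bound machinery matches the two. You recognize this difficulty in your final paragraph, but neither fallback closes it: $\mu_\epsilon$ is already a limit, so there is no natural sequence on which to run a superadditivity argument for the scaled saving $\epsilon^{-1/d_1}(\|z\|_1-\mu_\epsilon(z))$ as $\epsilon\to 0$, and ``renormalisation on boxes of side $\epsilon^{-1/d_1}$'' is a name for a strategy rather than an argument. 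There is also a secondary gap: the Bollob\'as--Winkler constant lives in the Poissonian continuum, so even for the upper bound you would need a coupling step to transfer it to the Bernoulli field as $\epsilon\to 0$, and you would need to show that your $d_2$-deviation optimization actually reproduces the correct $C(d_1,d_2)$, which is asserted but not argued.

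The paper avoids the constant-matching problem entirely by interposing an intermediate \emph{oriented} model and using it as a bridge. Proposition~\ref{l:reduction_oriented} (driven by the technical Lemma~\ref{l:sommable}) shows $\mu_\epsilon(z)=\|z\|_1-\sigma^\cD_\epsilon(z)+o(\epsilon^{1/d_1})$, where $\sigma^\cD_\epsilon$ is the mean directional score of the oriented discrete model. The crucial structural difference from your proposal is that Lemma~\ref{l:sommable} compares the unoriented savings to the \emph{random} oriented score $S^\cD_\epsilon(\lfloor nz\rfloor)$ rather than to a deterministic target constant; the union bound (organized around a monotone skeleton $b(i_0)\prec\cdots\prec b(i_q)$, the detours $g(k,i)$ and the vertical excursions $f(i)$) is used only to show that backtracking is too costly, not to extract the constant. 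The constant is then identified once and for all by coupling the oriented discrete model to the semi-continuous Poisson model on $\epsilon^{1/d_1}$-cubes (Lemmas~\ref{l:coupling1}--\ref{l:valuesigmac}), which gives $\sigma^\cD_{\tilde\epsilon}(z)/\tilde\epsilon^{1/d_1}\to\gamma(z)\sigma^\cC_0(\1_{d_1})$ directly and so makes the upper- and lower-bound constants coincide by construction rather than by separate computation. To repair your proposal you would need to reproduce this two-step reduction: first show that oriented and unoriented savings agree to order $o(\epsilon^{1/d_1})$, and only then identify the oriented constant by a Poisson limit.
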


Moreover, we have the following comparison between the constants appearing in the two previous theorems.
\begin{prop}\label{p}
Let $(\bC(d_1,d_2)_{d_1 \ge 1, d_2 \ge 0}$ (resp. $(C(d_1,d_2))_{d_1 \ge 1, d_2 \ge 0}$) be the constants appearing in Theorem \ref{t2} (resp. Theorem \ref{t}), we have
\[
 d_1^{1/d_1}C(d_1,d_2) \le \bC(d_1,d_2) \le (d_1+d_2)^{1/d_1}C(d_1,d_2).
 \]
In particular when $d_2=0$ we get the equality 
\[
\bC(d_1,0)=d_1^{1/d_1}C(d_1,0).
\]
If moreover $d=d_1=2$, the value of both constants is explicit:
\[
C(2,0) = 2 \quad \textrm{and} \quad \bC (2,0) =   2^{3/2}.
\]
\end{prop}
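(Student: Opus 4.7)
The proposition has three parts: the two-sided inequality between $\bC$ and $C$; the equality $\bC(d_1,0) = d_1^{1/d_1} C(d_1,0)$, which is an immediate consequence of the inequality applied with $d_2 = 0$ (both bounds then coincide); and the explicit values $C(2,0) = 2$ and $\bC(2,0) = 2^{3/2}$. For the inequality, my plan is to couple the bond and site models via a disjoint-edge partition. Given a bond configuration $(\btau_\epsilon(u))_u$, I would associate to each vertex $x \in \Z^d$ a subset $S(x)$ of incident edges, chosen so that the $S(x)$'s are pairwise disjoint as $x$ ranges over $\Z^d$, and define a derived site field $\tau(x) := \max_{u \in S(x)} \btau_\epsilon(u)$. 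The disjointness makes $(\tau(x))_x$ an i.i.d.\ Bernoulli family with parameter $\epsilon' = 1 - (1-\epsilon)^{|S(x)|} \sim |S(x)| \epsilon$.

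Choosing $|S(x)| = d$ (say, all edges emanating from $x$ in the positive coordinate directions) yields $\epsilon' \sim d\epsilon$, while choosing $|S(x)| = d_1$ (keeping only edges in the directions of the non-zero coordinates of $z$) yields $\epsilon' \sim d_1 \epsilon$. Comparing passage times along paths that are monotone in the relevant coordinate directions and passing to the limit, these two couplings are to produce $\bC \le d^{1/d_1} C$ and $\bC \ge d_1^{1/d_1} C$ respectively, via the expansions of Theorems \ref{t} and \ref{t2}. The crux is the disjointness: a naive choice taking all $2d$ incident edges at each vertex would yield $\epsilon' \sim 2d\epsilon$ and a spurious factor of $2^{1/d_1}$.

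For the explicit constants when $d = d_1 = 2$, the equality reduces the task to proving $C(2,0) = 2$. Taking $z = (1,1)$, I would restrict site FPP to NE-monotone paths from $0$ to $(n,n)$: each such path visits $2n$ non-destination vertices, so its passage time equals $2n - L_n$ with $L_n$ the number of open vertices on the path. The convergence $L_n / n \to 2\sqrt{\epsilon(1-\epsilon)}$ a.s.\ follows from a direct adaptation to the site setting of Sepp\"al\"ainen's solution of the discrete Ulam problem (equivalently, the Hammersley-lines argument of Basdevant--Enriquez--Gerin--Gou\'er\'e). Thus $\mu_\epsilon((1,1)) \le 2 - 2\sqrt\epsilon + o(\sqrt\epsilon)$, giving $C(2,0) \ge 2$. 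The matching bound $C(2,0) \le 2$ --- i.e.\ that non-monotone paths do not improve the first-order coefficient --- should emerge from the lower-bound half of the proof of Theorem \ref{t}. Then $\bC(2,0) = 2^{1/2} \cdot 2 = 2^{3/2}$ follows from the equality case. The main obstacle throughout is this comparison for non-monotone paths, which is the only source of non-leading-order error one has to absorb when translating the coupling statement into an inequality between time constants.
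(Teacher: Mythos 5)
Your disjoint-edge coupling is, once a sign is corrected, a valid alternative to the paper's proof and delivers the same two bounds, so this is a genuinely parallel route. The paper's own argument (Lemma \ref{l:encadrement_arete}) compares the bond semi-continuous model to two site semi-continuous models with rescaled Poisson intensities $d_1\nu$ and $(d_1+d_2)\nu$; you instead couple at the discrete level and only then pass to the semi-continuous limit. Conceptually both exploit the same attribution: each bond opportunity at edge $(x,x+e_j)$ is charged to its lower endpoint $x$, so $x$ sees at most $d$ attributed opportunities (upper bound), while the $d_1$ horizontal ones already reproduce a site model (lower bound).

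The correction: with the paper's convention that \emph{open} means passage time $0$ (probability $\epsilon$, since passage times are Bernoulli$(1-\epsilon)$), your $\tau(x)=\max_{u\in S(x)}\btau_\epsilon(u)$ gives $\P(\tau(x)=0)=\epsilon^{|S(x)|}$, i.e.\ a site model at parameter $\epsilon^{|S|}$, which is useless, not $1-(1-\epsilon)^{|S|}$. The construction that actually produces the parameter $\epsilon'=1-(1-\epsilon)^{|S|}\sim|S|\epsilon$ you state is $\tau(x):=\min_{u\in S(x)}\btau_\epsilon(u)$, i.e.\ $x$ is declared open iff at least one edge of $S(x)$ is open. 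With this fix your two choices of $S$ do yield the two bounds, and the cleanest place to run the comparison is at the level of the oriented scores $\bS^\cD_\epsilon$ and $S^\cD_\cdot$, since these already quantify over exactly the monotone sequences and therefore sidestep the non-monotone path discussion you flag. For $|S|=d_1$: from a monotone sequence of $\tau$-open sites $w(1)\prec\cdots\prec w(k)$, pick for each $w(i)$ an open horizontal edge in $S(w(i))$; this is a monotone sequence of open edges with the same $R$ and $V$, so $\bS^\cD_\epsilon(z)\ge S^\cD_{\epsilon'}(z)$. For $|S|=d$: from a monotone sequence of open edges $\bw(1)\prec\cdots\prec\bw(k)$, take the lower endpoints $\alpha(i)$; each is $\tau$-open, the sequence is monotone, $R=\bR=k$, and $V(\alpha(\cdot))\le\bV(\bw(\cdot))$, so $\bS^\cD_\epsilon(z)\le S^\cD_{\epsilon''}(z)$. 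Dividing by $\epsilon^{1/d_1}$ and invoking Lemma \ref{l:link_discrete_continous} and its bond analogue (rather than the full Theorems \ref{t} and \ref{t2}) yields exactly Lemma \ref{l:encadrement_arete}. The $d_2=0$ equality then follows immediately, and for $C(2,0)=2$ the paper identifies $C(2,0)=\sigma^\cC_0((1,1))$ with the Hammersley constant directly; your route through Sepp\"al\"ainen's discrete longest-increasing-subsequence asymptotics is the same fact seen before the $\epsilon\to0$ limit, the link being again Lemma \ref{l:link_discrete_continous}.
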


In fact the constants $C(d_1,d_2)$ and $\bC(d_1,d_2)$ have an explicit interpretation in terms of an auxiliary semi-continuous (partially) oriented model, see \eqref{e:expressionC} and \eqref{e:expressionC2}. The case $d_2=0$ corresponds to the diagonal case, in which this second model is in fact totally continuous and oriented. For $d=2$ and $(d_1,d_2) = (2,0)$, this model is solvable: it is a continuous Poissonization version of the discrete Ulam's problem described above, introduced by Hammersley \cite{Hammersley}, solved first by Logan and Shepp and by Vershik and Kerov in 1977, and revisited later in a probabilistic way by Aldous and Diaconis \cite{AldousDiaconis} using the so-called Hammersley's line process. The equality $C(2,0) = 2$ comes from there.

\paragraph{Strategy of the proof and organization of the paper.}
The common strategy of the proof of Theorems \ref{t2} and \ref{t} is the following. First prove that the first-order behavior of the time constant for small $\epsilon$ is the same for the studied model and a (partially) oriented version of it. Then we prove the convergence of the time constant of this oriented model, properly rescaled by a power of $\epsilon$, towards the time constant associated with a related semi-continuous oriented model, and check that this limit is well behaved.

The relation between the semi-continuous oriented model, the oriented model and the original one is significantly more intuitive in the context of site first-passage percolation. For this reason, we focus first on the proof of Theorem \ref{t} in Section \ref{s:proof}, following the strategy described above. The adaptation of the proof to get Theorem \ref{t2} is given in Section \ref{s:bond}. The proof of some standard results is postponed to the Appendix.

\section{Proof of Theorem \ref{t}: the site case}

\label{s:proof}

\subsection{Setting and notations}

\label{s:setting}

In the whole of Section \ref{s:proof}, we fix $d_1 \ge 1$, $d_2 \ge 0$ and we set $d=d_1+d_2$. We consider Bernoulli site first-passage percolation on $\mathbb{Z}^d$.
Our aim is to prove the existence of a positive constant $C(d_1,d_2)$ such that, for any $z \in \R^d$ such that $d_1(z)=d_1$ and $d_2(z)=d_2$,
the following limit holds: 
\[
\lim_{\epsilon \to 0} \frac{\|z\|_1-\mu_\epsilon(z)}{\epsilon^{1/d_1}} = C(d_1,d_2) \gamma(z).
\]
By symmetry, it is sufficient to prove this result for any $z \in (0,+\infty)^{d_1} \times \{0\}^{d_2}$.

We denote by  $\projh : \Z^{d_1}\times\Z^{d_2} \to \Z^{d_1}$ the projection on the first space
and by $\projv : \Z^{d_1}\times\Z^{d_2} \to \Z^{d_2}$ the projection on the second space.
We will sometimes refer to the $d_1$ first coordinates as the {\em horizontal} coordinates
and to the $d_2$ last coordinates as the {\em vertical} coordinates.
With this terminology $\projh$ is the projection on the horizontal coordinates and $\projv$ is the projection on the vertical coordinates.

In what follows, we have to deal with sequences of points in $\mathbb{R}^d$ and to adopt a notation for the coordinates of these points: we put the label of the points within the sequence into brackets, and designate one of its coordinates by a subscript. For instance, if $(w(1), \dots , w(k))$ is a sequence of $k$ points in $\mathbb{R}^d$, $w(i)_j$ is the $j$-th coordinate of $w(i)$.

\subsection{A related discrete oriented model}

\label{s:discrete_model}

\subsubsection{The model}

We first define a relation $\prec$ on $\Z^d$.
Let $x, y \in \Z^d$. 
Write $x=(x_1,\dots,x_d)$ and $y=(y_1,\dots,y_d)$.
We define $x \prec y$ as follows:
\[
x \prec y \text{ holds when, for all }  i \in \{1,\dots,d_1\}, \; x_i < y_i.
\]
In other words, we require a strict inequality for indices $i \le d_1$ and we make no requirement for indices $i \ge d_1+1$.
We define a relation $\preceq$ similarly with a large inequality.

Let $x \preceq y$ be two vertices of $\Z^{d_1} \times \{0\}^{d_2}$.
Let $\cD_\epsilon(x,y)$ ($\cD$ stands for {\em discrete}) be the set of monotone sequences of open  sites between $x$ and $y$, that is
\begin{equation*}
\cD_\epsilon(x,y): = 
 \{ (w(1),\dots,w(k)) : k \ge 0, w(1),\dots, w(k)  \text{ open sites of } \Z^d 
\text{ with } x \preceq w(1) \prec \cdots \prec w(k) \prec y\}.
\end{equation*}
Note that we commit an abuse of language by using the word "monotone" as $\prec$ is not an order.
Let $(w(1),\dots,w(k))  \in \cD_\epsilon(x,y)$. 
Write $w(0)=x$, $w(k+1)=y$ and
set
\[
V(w(1),\dots,w(k)) = \sum_{i=1}^{k+1} \|\projv(w(i))-\projv(w(i-1))\|_1 \quad \text{ and }\quad R(w(1),\dots,w(k))=k.
\]
Note that $\projv(w(0))=\projv(w(k+1))=0$ so $V$ does only depend on $(w(1),\dots,w(k))$.
Moreover
\begin{equation}\label{e:length_V}
\sum_{i=1}^{k+1} \|w(i)-w(i-1)\|_1 = \|y-x\|_1 + V(w(1),\dots,w(k)).
\end{equation}
The quantity $V(w(1),\dots,w(k))$ denotes the total vertical displacement to travel from $x$ to $w(1)$, then to $w(2)$, and so on until $w(k)$ 
and finally to $y$.
The quantity $R(w(1),\dots,w(k))$ is the number of rewards collected along this sequence, since each open vertex can be seen as a gain of one unit for the travel time.

For all $\epsilon>0$ and all $x \preceq y$ in $\Z^{d_1} \times \{0\}^{d_2}$, we define a score $S^\cD_\epsilon(x,y)$ by
\begin{equation}\label{e:score}
S^\cD_\epsilon(x,y) =  \sup_{s \in \cD_\epsilon(x,y)} \big(R(s) - V(s)\big).
\end{equation}
When the starting point $x$ is the origin of $\Z^d$, we will simply write $S^\cD_\epsilon(y)$ to denote $S^\cD_\epsilon(0,y)$. Note that
the score is non-negative as $\emptyset \in \cD_\epsilon(x,y)$ (this is the case where $k=0$) and $R(\emptyset)-V(\emptyset)=0$ (see an example of a score calculated in Figure \ref{Fig:discret_site}).
Let state here the following elementary result which quickly explains the link between this score and the travel time.
The point is that the bound given below is sharp when $\epsilon>0$ is small (see Lemma \ref{l:reduction_oriented}).
Therefore the study of $T_\epsilon$ can be reduced to the study of $S^\cD_\epsilon$.

\begin{figure}
\begin{center}
\includegraphics[width=10cm]{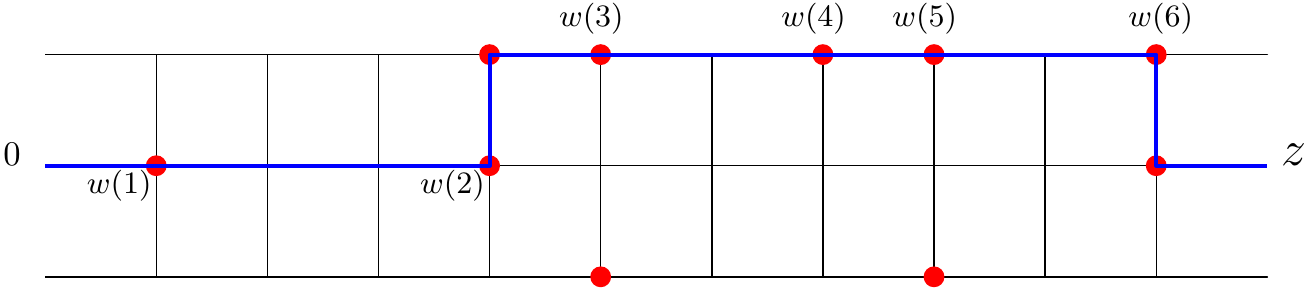}
\end{center}
\caption{Example in the discrete oriented model of site percolation in dimensions $(d_1,d_2)=(1,1)$. Open sites are drawn in red. Here, the score $S^\cD_\epsilon(0,z)$ can be achieved with the path drawn in blue. Note that two open sites $w(i),w(i+1)$ taken by the blue path are counted as two rewards for this path  only if their abscissa are strictly increasing. Note  also that the set of paths considered to compute the score can only do steps to the North, South or East. 
Here, we have $S^\cD_\epsilon(0,z)=6-2=4$. } 
\label{Fig:discret_site}
\end{figure}

\begin{lemma} \label{l:elementary}
For all $z\in \Z^{d_1}\times\{0\}^{d_2}, T_\epsilon(0,z) \le \|z\|_1 - S^\cD_\epsilon(z)$.
\end{lemma}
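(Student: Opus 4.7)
The plan is to take any monotone sequence $s = (w(1),\dots,w(k)) \in \cD_\epsilon(0,z)$, build an explicit path $\pi$ from $0$ to $z$ that threads through the open sites $w(1), \dots, w(k)$ in order, and read off the bound $T_\epsilon(0,z) \le \|z\|_1 - R(s) + V(s)$. Taking the supremum over $s$ then gives the lemma.

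Concretely, set $w(0) = 0$ and $w(k+1) = z$, and let $\pi$ be the concatenation of any choice of $\ell^1$-geodesics in $\Z^d$ from $w(i-1)$ to $w(i)$ for $i = 1, \dots, k+1$. Writing $\pi = (x_0, \dots, x_n)$, its number of edges is exactly
\[
n \;=\; \sum_{i=1}^{k+1} \|w(i)-w(i-1)\|_1 \;=\; \|z\|_1 + V(s),
\]
by the identity \eqref{e:length_V}. Now each $w(i)$ for $1 \le i \le k$ appears as some $x_{j(i)}$, and the indices $j(1) < \cdots < j(k)$ are pairwise distinct because the horizontal coordinates are strictly increasing along the sequence. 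Moreover $j(k) < n$: indeed $w(k) \prec z$ forces $w(k) \ne z = x_n$ (the first $d_1$ coordinates of $z$ are positive, so strict inequality on horizontal coordinates genuinely excludes $w(k) = z$). Hence the travel time
\[
\tau_\epsilon(\pi) \;=\; \sum_{j=0}^{n-1} \tau_\epsilon(x_j)
\]
sums $n$ terms in $\{0,1\}$, of which at least $k$ vanish (the $j(i)$'s, since each $w(i)$ is open by definition of $\cD_\epsilon$). Therefore $\tau_\epsilon(\pi) \le n - k = \|z\|_1 + V(s) - R(s)$, and taking the infimum over paths gives $T_\epsilon(0,z) \le \|z\|_1 - (R(s) - V(s))$.

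Finally, take the supremum over $s \in \cD_\epsilon(0,z)$. The empty sequence $s = \emptyset$ (the case $k = 0$) is included in $\cD_\epsilon(0,z)$ and yields the trivial bound $T_\epsilon(0,z) \le \|z\|_1$, so the supremum is well defined and $T_\epsilon(0,z) \le \|z\|_1 - S^\cD_\epsilon(z)$.

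The argument is essentially a bookkeeping exercise, so there is no real obstacle; the only subtle point is making sure we are allowed to subtract exactly $k$ (not less) from $n$, which requires (i) that the $w(i)$'s correspond to pairwise distinct indices in $\pi$ (guaranteed by strict horizontal monotonicity) and (ii) that none of them coincides with the excluded terminal vertex $x_n = z$ (guaranteed by $w(k) \prec z$). Both are immediate consequences of the definition of $\prec$ together with $z \in (0,\infty)^{d_1} \times \{0\}^{d_2}$.
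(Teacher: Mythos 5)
Your proof is correct and follows essentially the same route as the paper: fix a sequence $s\in\cD_\epsilon(0,z)$, concatenate $\ell^1$-geodesics to form a path of length $\|z\|_1 + V(s)$ through the open sites, and bound its travel time by the length minus $k$. The extra bookkeeping you add (distinctness of the indices $j(i)$ and the exclusion of the terminal vertex) is a slight elaboration of what the paper leaves implicit, not a different argument.
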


This lemma is clear if you have in mind that in fact $\|z\|_1 - S^\cD_\epsilon(z)$ can be seen as the travel time from $0$ to $z$ if you only allow oriented paths in the $d_1$ \emph{horizontal} direction and moreover, the passage time of an open site is counted as $0$  if and only if the previous open site through which the path passed was strictly smaller for the order $\prec$. However, we provide here also a more formal proof.
\begin{proof}
Let $s = (w(1),\dots,w(k)) \in \cD_\epsilon(0,z)$.
Write $w(0)=0$ and $w(k+1)=z$.
Consider a path $\pi$ of minimal length (number of steps) that starts from $w(0)$, goes to $w(1)$, then to $w(2)$ and so on until $w(k+1)$.
Its length is 
\[
\sum_{i=1}^{k+1} \|w(i)-w(i-1)\|_1 = \|z\|_1 + V(s) \text{ by } \eqref{e:length_V}.
\]
As each site $w(1),\dots,w(k)$ is open, its travel time can be bounded from above by its length minus $k$:
\[
\tau_\epsilon(\pi) \le \|z\|_1 + V(s) - R(s) = \|z\|_1 - (R(s) - V(s)).
\]
This yields the result.
\end{proof}

We define a mean directional score $\sigma^\cD_\epsilon(\cdot)$ in the following lemma.
\begin{lemma}  \label{l:sigma_discrete}
For all $z \in (0,+\infty)^{d_1} \times \{0\}^{d_2}$, the following limit is well defined :
\begin{equation}\label{e:sigma}
\sigma^\cD_\epsilon(z) : = \lim_{n \to \infty} \frac 1 n \E[S^\cD_\epsilon(\lfloor nz \rfloor )].
\end{equation}
\end{lemma}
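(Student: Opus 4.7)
The plan is to verify that $a_n := \mathbb{E}[S^\cD_\epsilon(0, \lfloor nz \rfloor)]$ is a superadditive sequence bounded from above by a linear function of $n$, and then to invoke Fekete's lemma for superadditive sequences.

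The core step is the pathwise inequality
\[
S^\cD_\epsilon(0, u_{n+m}) \geq S^\cD_\epsilon(0, u_n) + S^\cD_\epsilon(u_n, u_{n+m}),
\]
where $u_k := \lfloor kz \rfloor \in \Z^{d_1} \times \{0\}^{d_2}$. Given $s_1 = (w(1),\dots,w(k)) \in \cD_\epsilon(0, u_n)$ and $s_2 = (w'(1),\dots,w'(\ell)) \in \cD_\epsilon(u_n, u_{n+m})$, the strict relation $w(k) \prec u_n$ combined with $u_n \preceq w'(1)$ gives $w(k) \prec w'(1)$, so the concatenation lies in $\cD_\epsilon(0, u_{n+m})$. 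Clearly $R$ is additive, and since $\projv(u_n)=0$ the triangle inequality at the junction yields
\[
V(s_1 \cup s_2) \leq V(s_1) + V(s_2) - \|\projv(w(k))\|_1 - \|\projv(w'(1))\|_1 + \|\projv(w'(1)) - \projv(w(k))\|_1 \leq V(s_1) + V(s_2).
\]
Taking the supremum on both sides produces the claimed inequality.

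Next I would take expectations. Translation invariance of $(\tau_\epsilon(x))_{x \in \Z^d}$ gives $\E[S^\cD_\epsilon(u_n, u_{n+m})] = \E[S^\cD_\epsilon(0, u_{n+m}-u_n)]$. A short monotonicity observation — if $y \preceq y'$ in $\Z^{d_1}\times\{0\}^{d_2}$, then any $s \in \cD_\epsilon(0,y)$ also lies in $\cD_\epsilon(0,y')$ because $w(k) \prec y \preceq y'$ still forces $w(k) \prec y'$ — combined with the elementary bound $\lfloor (n+m)z_i \rfloor \geq \lfloor nz_i \rfloor + \lfloor mz_i \rfloor$ (which gives $u_m \preceq u_{n+m}-u_n$), then yields $\E[S^\cD_\epsilon(0, u_{n+m}-u_n)] \geq a_m$. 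Putting the pieces together, $a_{n+m} \geq a_n + a_m$.

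For the upper bound, note that for any $s=(w(1),\dots,w(k)) \in \cD_\epsilon(0, \lfloor nz \rfloor)$, the $k$ integers $w(1)_i < \cdots < w(k)_i$ live in $[0,\lfloor nz_i \rfloor)$ for every $i \leq d_1$, so $k \leq \min_{i \leq d_1}\lfloor nz_i \rfloor$. Since $V \geq 0$, this gives $S^\cD_\epsilon(0,\lfloor nz \rfloor) \leq n \min_{i \leq d_1} z_i$ and hence $a_n/n$ is bounded above. Fekete's lemma for superadditive sequences then delivers convergence of $a_n/n$ to $\sup_n a_n/n$, which is finite and non-negative, and we define $\sigma^\cD_\epsilon(z)$ as this limit. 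The only delicate point in the argument is the handling of the vertical displacement at the junction of two concatenated sequences, for which the identity $\projv(u_n)=0$ (i.e. the assumption $z \in (0,+\infty)^{d_1} \times \{0\}^{d_2}$) is essential; everything else is routine.
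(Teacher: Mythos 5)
Your proof is correct and follows essentially the same approach as the paper: establish superadditivity of $n \mapsto \E[S^\cD_\epsilon(\lfloor nz\rfloor)]$ via concatenation of monotone sequences (with the key observation that $V$ is subadditive under concatenation because $\projv(u_n)=0$), then invoke Fekete. The only cosmetic difference is in handling the floor-function discrepancy: the paper chains through the intermediate point $\lfloor pz\rfloor + \lfloor qz\rfloor$ and drops a non-negative term, while you use stationarity plus a monotonicity-in-the-endpoint observation combined with $\lfloor (n+m)z_i\rfloor \ge \lfloor nz_i\rfloor + \lfloor mz_i\rfloor$; you also add a clean upper bound $a_n/n \le \min_i z_i$ ensuring the Fekete limit is finite, which the paper omits here.
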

%
\begin{proof}
This follows from standard sub-additive arguments.
Let $x \preceq y \preceq z$ in $\Z^{d_1}\times\{0\}^{d_2}$.
The concatenation $(s,s')$ of a sequence $s\in \cD_\epsilon(x,y)$ and of a sequence $s' \in \cD_\epsilon(y,z)$ belongs to $\cD_\epsilon(x,z)$.
Moreover $R(s,s')=R(s)+R(s')$ and $V(s,s') \le V(s)+V(s')$. Therefore 
\[
S^\cD_\epsilon(x,z) \ge S^\cD_\epsilon(x,y)+S^\cD_\epsilon(y,z).
\]
Let now $z$ be in $(0,+\infty)^{d_1} \times \{0\}^{d_2}$.
For any integers $p,q \ge 0$, using first the above triangle inequality and then the non-negativity of the scores,
\begin{align*}
S^\cD_\epsilon(\lfloor (p+q)z \rfloor) 
& \ge S^\cD_\epsilon(\lfloor pz \rfloor) + S^\cD_\epsilon(\lfloor pz \rfloor,\lfloor pz \rfloor + \lfloor qz \rfloor) +
 S^\cD_\epsilon(\lfloor pz \rfloor + \lfloor qz \rfloor,\lfloor (p+q)z \rfloor)\\
& \ge S^\cD_\epsilon(\lfloor pz \rfloor) + S^\cD_\epsilon(\lfloor pz \rfloor,\lfloor pz \rfloor + \lfloor qz \rfloor).
\end{align*}
Integrating and using stationarity, we get
\[
\E\big[S^\cD_\epsilon(\lfloor (p+q)z \rfloor)\big]
\ge \E[S^\cD_\epsilon(\lfloor pz \rfloor)\big]+\E[S^\cD_\epsilon(\lfloor qz \rfloor)\big].
\]
The result then follows from Fekete's subadditive Lemma.
\end{proof}

It is worth noticing that $\|z\|_1 - \sigma_\epsilon^\cD(z)$ corresponds to the time constant associated with the discrete oriented model we have just defined. The convergence appearing in Lemma \ref{l:sigma_discrete} could be strengthened to a convergence with probability one and in $L^1$, however we do not need it.

\subsubsection{Link between the oriented model and the original model}

\label{s:link_oriented_original}

We have in hand two models: the site first-passage percolation, and its (partially) oriented version. We defined the two corresponding time constants, namely $\mu_\epsilon(z)$ and $\|z\|_1-\sigma_\epsilon^\cD(z)$. Our goal is now to prove that these two time constants have the same behavior at order $\epsilon^{1/d_1}$. More precisely, the main result of this section is the following result.

\begin{prop} \label{l:reduction_oriented} 
For all $z\in(0,+\infty)^{d_1}\times\{0\}^{d_2}$,
\[
\lim_{\epsilon \to 0} \frac{\mu_\epsilon(z)-(\|z\|_1-\sigma_\epsilon^\cD(z))}{\epsilon^{1/d_1}} = 0.
\]
\end{prop}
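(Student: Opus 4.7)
The upper bound $\mu_\epsilon(z) \le \|z\|_1 - \sigma^\cD_\epsilon(z)$ is immediate from Lemma~\ref{l:elementary}: take expectations in the inequality $T_\epsilon(0, \lfloor nz \rfloor) \le \|\lfloor nz \rfloor\|_1 - S^\cD_\epsilon(\lfloor nz \rfloor)$, divide by $n$, and let $n\to\infty$. The proposition therefore reduces to the matching lower bound
\[
\mu_\epsilon(z) \ge \|z\|_1 - \sigma^\cD_\epsilon(z) - o(\epsilon^{1/d_1}) \qquad \text{as } \epsilon\to 0.
\]

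For this, my plan is to start from a (near-)geodesic path $\pi$ from $0$ to $\lfloor nz\rfloor$ and construct from it an element $s\in\cD_\epsilon(0,\lfloor nz\rfloor)$ whose score $R(s)-V(s)$ nearly matches the ``gain'' $R(\pi)-U(\pi)$, where $U(\pi)=|\pi|-\|\lfloor nz\rfloor\|_1\ge 0$ is the excess length and $R(\pi)$ is the number of open sites visited by $\pi$. Using $\tau_\epsilon(\pi)=|\pi|-R(\pi)$, this gain is precisely $\|\lfloor nz\rfloor\|_1-\tau_\epsilon(\pi)$. Concretely one wants
\[
R(s)-V(s)\ge R(\pi)-U(\pi)-\Delta_n \quad\text{with}\quad \E[\Delta_n]=o(n\epsilon^{1/d_1}),
\]
and then averaging and passing to the limit $n\to\infty$ yields the lower bound.

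A natural construction renormalizes at a mesoscopic scale $L=L(\epsilon)$ satisfying $\epsilon^{-1/d_1}\ll L\ll n$. Partition the target region by hyperplanes perpendicular to the first horizontal axis at spacing $L$, and within each resulting slab extract from the open sites visited by $\pi$ a longest $\prec$-monotone subsequence (a local instance of Lemma~\ref{l:elementary}). Concatenating these local subsequences produces the global sequence $s$. The open sites discarded by the extraction reflect non-monotonicity of $\pi$ inside a slab, hence correspond to horizontal backtracks of $\pi$ which contribute to $U(\pi)$; the vertical cost $V(s)$ is controlled by the vertical motion of $\pi$ (itself part of $U(\pi)$) plus $O(n/L)$ inter-slab jumps, each of vertical length at most $L$.

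The main obstacle is the quantitative bookkeeping. One needs an a priori upper bound $\E[U(\pi)]=O(n\epsilon^{1/d_1})$ for a geodesic, obtainable by plugging the density estimate $R(\pi)\le(1+o(1))|\pi|\,\epsilon^{1/d_1}$ (itself flowing from applying Lemma~\ref{l:elementary} to subsegments of $\pi$ at scale $L$) into the identity $R(\pi)-U(\pi)=n\|z\|_1-\tau_\epsilon(\pi)$; and one also needs a local version of the extraction inside each slab tight enough that summing over the $O(n/L)$ slabs leaves a total error of order $o(n\epsilon^{1/d_1})$. A choice such as $L=\epsilon^{-1/d_1}|\log\epsilon|$ should make all boundary and discretization effects sub-leading. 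The delicate part is to control simultaneously the discarded open sites, the vertical cost $V(s)$ and the inter-slab contributions, all against the single budget $U(\pi)$, without double-counting.
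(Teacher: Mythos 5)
Your framing of the problem — reduce to a lower bound, pass from a geodesic to a monotone sequence, show the defect is $o(n\epsilon^{1/d_1})$ — matches the paper's overall design, and the identity $R(\pi)-U(\pi)=\|\lfloor nz\rfloor\|_1-\tau_\epsilon(\pi)$ is the right accounting. But there is a genuine gap in the central mechanism, and the paper proceeds quite differently at exactly this point.

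The crucial flaw is the sentence ``the open sites discarded by the extraction reflect non-monotonicity of $\pi$ inside a slab, hence correspond to horizontal backtracks of $\pi$ which contribute to $U(\pi)$.'' This is not true. The relation $\prec$ asks for a \emph{strict} increase of \emph{every} horizontal coordinate, so two open sites visited by $\pi$ can fail to be $\prec$-comparable simply because they share one horizontal coordinate — a ``tie'' — while the path moves only forward in the other coordinates. For $d_1\ge 2$ such a tie costs the path no extra length at all (moving in the other horizontal or vertical directions is useful progress, not a detour), yet it forces the extraction to discard a reward. So the discarded sites cannot be budgeted deterministically against $U(\pi)$: there is no inequality of the form $R(s)-V(s)\ge R(\pi)-U(\pi)-\Delta_n$ valid pathwise with a controllable $\Delta_n$, because an adversarial configuration with many open sites aligned along a hyperplane $\{x_j=\text{const}\}$ defeats it. The statement can only be true \emph{in probability}, which is why the paper frames the reduction as an event $\cM(z,\epsilon,\eta,n)$ and proves $\P[\cM]\to 0$ (Lemma~\ref{l:sommable}) rather than trying to produce a good $s$ from $\pi$ directly.

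The paper's route is therefore a first-moment / union-bound argument, not a renormalization argument. It restricts to nice paths (Lemma~\ref{l:path-nice}) so that the open sites visited are automatically below $\lfloor nz\rfloor$, extracts \emph{one} greedy $\prec$-monotone subsequence from the open sites of the optimal nice path (no slabs, no scale $L$), records the combinatorial data $(q,\ell_k,j_k,f,g,h)$ and the deterministic inequalities Items~1--5 of Lemma~\ref{l:reduction-chernov}, and then sums the probability $\epsilon^p$ of each candidate site-sequence over all admissible data, controlling the sum with exponential (Chernoff-type) bounds on the indicators. It is precisely this sum over configurations that turns the ``tie'' problem into a tractable estimate: many aligned open sites are simply improbable, and Item~3 (the coordinate $j_k$ in which the skipped block is weakly decreasing) is what lets the geometric series over the $g(k,i)$ converge. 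Your slab decomposition and the budget $U(\pi)$ do not provide a substitute for this probabilistic step; without it, the ``delicate part'' you flag at the end is not merely delicate but unresolvable by the proposed bookkeeping.

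Two secondary points. First, you take $\pi$ to be a geodesic from $0$ to $\lfloor nz\rfloor$; without the nice-path reduction, $\pi$ may visit open sites $w$ with $w\not\prec \lfloor nz\rfloor$, which are inadmissible in $\cD_\epsilon(0,\lfloor nz\rfloor)$, and this excess also has to be controlled (the paper does it once and for all through $T_\epsilon^{\text{nice}}$). Second, even granting everything else, the a priori estimate $\E[U(\pi)]=O(n\epsilon^{1/d_1})$ you invoke is itself not obviously available before the proposition is proved — it is essentially equivalent to the lower bound you are trying to establish — so the argument risks circularity.
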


Lemmas \ref{l:elementary} and \ref{l:sigma_discrete} already tell us that $\mu_\epsilon(z) \leq \|z\|_1-\sigma_\epsilon^\cD(z) $. This comes from a basic comparison between general first-passage percolation and (partially) oriented first-passage percolation: restricting ourselves to oriented paths increases the minimal passage time over paths. The delicate part is to prove that it cannot increases it {\em significantly}.
In the remaining of Section \ref{s:link_oriented_original} we give a proof relying 
on Lemma \ref{l:path-nice} -- whose standard proof is given in Appendix \ref{s:proof-path-nice} --
and on Lemma \ref{l:sommable} -- whose technical proof is postponed to Section  \ref{s:proof-sommable}. 

We first need to introduce a few objects and two intermediate results.
Recall the definition of $\preceq$ in Section \ref{s:discrete_model}. 
Let $x, y \in \Z^{d_1}\times\{0\}^{d_2}$.
We denote by $y^-$ the point $y-\1_{d_1}$ where $\1_{d_1}$ is the point of $\Z^{d_1} \times \{0\}^{d_2}$ whose $d_1$ first coordinates equal one.
Assume $x \prec y^-$.
We say that a path $\pi=(a(0),\dots,a(n))$ in $\Z^d$ is a nice path from $x$ to $y^-$ if 
\[
a(0)=x, a(n)=y^- \text{ and for all } i \in \{0,\dots,n\}, a(i) \prec y.
\]
We denote by $T_\epsilon^{\text{nice}}(x,y)$ the infimum of the travel times $\tau_\epsilon(\pi)$ over every nice path $\pi$ from $x$ to $y^-$.

\begin{lemma} \label{l:path-nice}
For all $z \in (0,+\infty)^{d_1}\times \Z^{d_2}$, for all $\epsilon>0$,
\[
\mu_\epsilon(z) = \lim_{n \to \infty} \frac 1 n \E\big[T^{\text{nice}}_\epsilon(0,\lfloor nz \rfloor)\big].
\]
\end{lemma}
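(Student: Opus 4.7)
My plan is to establish the equality by proving both one-sided inequalities.

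\textbf{Lower bound.} Every nice path from $0$ to $\lfloor nz \rfloor$ is, by definition, a path from $0$ to $\lfloor nz \rfloor - \1_{d_1}$. Hence $T^{\text{nice}}_\epsilon(0, \lfloor nz \rfloor) \ge T_\epsilon(0, \lfloor nz \rfloor - \1_{d_1})$. Taking expectations and dividing by $n$, and combining the standard convergence $\E[T_\epsilon(0, \lfloor nz \rfloor)]/n \to \mu_\epsilon(z)$ (recalled in the Appendix) with the trivial Lipschitz bound $|T_\epsilon(0, y) - T_\epsilon(0, y')| \le \|y - y'\|_1$ to absorb the shift by $\1_{d_1}$, we conclude $\liminf_n \E[T^{\text{nice}}_\epsilon(0, \lfloor nz \rfloor)]/n \ge \mu_\epsilon(z)$.

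\textbf{Upper bound.} The strategy is to exhibit, for any small $\eta > 0$, a random nice path whose expected travel time is at most $(1-\eta)\mu_\epsilon(z) n + c\eta n + o(n)$, and then send $\eta \to 0$. Set $w_n := \lfloor n(1-\eta) z \rfloor$. Since each $z_j > 0$ for $j \le d_1$, the point $w_n$ lies strictly below $\lfloor nz \rfloor$ in every horizontal coordinate by an amount of order $\eta n z_j$ for $n$ large, so $w_n$ sits well inside the slab $S_n := \{x \in \Z^d : x_j \le \lfloor nz \rfloor_j - 1, \, j \le d_1\}$. I then consider the concatenation of (i) a slab-restricted near-optimal path from $0$ to $w_n$ (with vertices in $S_n$), and (ii) a deterministic monotone $\ell^1$-shortest path from $w_n$ to $\lfloor nz \rfloor - \1_{d_1}$ (easily chosen inside $S_n$). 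The concatenation is a nice path. Piece (ii) has length $O(\eta n)$, hence contributes at most $O(\eta n)$ to the travel time; piece (i) has expected travel time $(1-\eta)\mu_\epsilon(z) n + o(n)$, modulo the classical first-passage percolation fact that slab-restricted passage times share the same time constant as unrestricted ones when the target direction is interior to the slab.

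\textbf{Main obstacle.} The principal technical input is the slab-equals-unrestricted statement: $\E[T^{S_n}_\epsilon(0, w_n)]/n \to (1-\eta)\mu_\epsilon(z)$, where $T^{S_n}_\epsilon$ denotes the infimum of travel times over paths contained in $S_n$. This is standard in FPP but requires care, since the slab $S_n$ itself depends on $n$. I would establish it via a concatenation argument inside the slab: partition the segment $[0, w_n]$ via checkpoints $v_0 = 0 \prec v_1 \prec \cdots \prec v_K = w_n$ lying in the interior of $S_n$, and on each sub-segment use an unconstrained geodesic. For $K$ fixed and $n$ large, the shape theorem ensures that each sub-geodesic has fluctuations of lower order than the $\Omega(\eta n/K)$ horizontal padding provided by $S_n$, so it lies in $S_n$ with high probability; the atypical events are controlled using the trivial upper bound $T_\epsilon(x,y) \le \|x - y\|_1$ for uniform integrability. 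Summing expected sub-times, invoking subadditivity, then combining with piece (ii) and letting first $n \to \infty$ and then $\eta \to 0$ (via continuity of $\mu_\epsilon$), yields $\limsup_n \E[T^{\text{nice}}_\epsilon(0, \lfloor nz \rfloor)]/n \le \mu_\epsilon(z)$, which completes the proof.
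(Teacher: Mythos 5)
Your lower bound is correct and is essentially the paper's: restrict from nice paths to unrestricted paths to $\lfloor nz\rfloor^-$, then absorb the shift $\1_{d_1}$ via the $\ell^1$-Lipschitz bound on $T_\epsilon$. The overall strategy for the upper bound — build a nice path from a restricted path to a point $\lfloor n(1-\eta)z\rfloor$ in the interior of the slab plus a short deterministic connector, then let $\eta\to 0$ — is also the same as the paper's. But the key step, where you claim that the slab-restricted passage time has the same time constant as the unrestricted one, is justified in a way that does not quite work, and this is precisely the delicate part that the paper handles differently.

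Your justification appeals to the shape theorem to argue that an unconstrained sub-geodesic from $v_i$ to $v_{i+1}$ stays within distance $O(n/K)$ of the segment. This argument breaks down for $\epsilon$ above the critical threshold: there $\mu_\epsilon\equiv 0$, the asymptotic shape $\cB_{\mu_\epsilon}$ is not compact, $B_\epsilon(t)/t$ does not converge to a bounded set, and geodesics between points at distance $n/K$ can wander a distance of order $n$ (indeed geodesics may thread arbitrarily far through the infinite cluster of open sites at zero cost). Even in the subcritical regime the bound on the wandering coming from the shape theorem scales like $n/K$ times the aspect ratio of $\cB_{\mu_\epsilon}$, which blows up as $\epsilon\uparrow p_c$; this forces $K$ to depend on $\epsilon$ and requires extra bookkeeping. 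Since the lemma is stated for all $\epsilon>0$, this is a genuine gap, not merely a cosmetic one. The paper sidesteps it entirely with a more elementary truncation: it fixes a scale $p$ and considers $M$-paths from $0$ to $\lfloor pz\rfloor$, i.e.\ paths constrained to the finite box $\{w : w\preceq \lfloor pz\rfloor + M\1_{d_1}\}$; since $T^M_\epsilon(0,\lfloor pz\rfloor)$ decreases to $T_\epsilon(0,\lfloor pz\rfloor)$ as $M\to\infty$ and both are dominated by $\|\lfloor pz\rfloor\|_1$, dominated convergence on this single fixed random variable suffices to pick $M$ so that $\E[T^M_\epsilon(0,\lfloor pz\rfloor)]/p\le\mu_\epsilon(z)+2\eta$. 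Concatenating translates of $M$-paths and a final $\ell^1$-geodesic then produces a nice path. No shape theorem, no geodesic-fluctuation estimate, and the argument is valid for every $\epsilon>0$ including $\epsilon\ge p_c$. If you want to keep your slab formulation you would need to replace the shape-theorem appeal by a truncation-plus-dominated-convergence argument of exactly this type.
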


This kind of results belongs to the folklore of first-passage percolation.
We provide the short proof in Section \ref{s:proof-path-nice}. 
For all $z \in (0,+\infty)^{d_1} \times \{0\}^{d_2}$, $\epsilon>0$, $\eta>0$ and $n$ we consider the event
\[
\cM(z,\epsilon,\eta,n)  = \{T^{\text{nice}}_\epsilon(0, \lfloor nz \rfloor) < \|\lfloor nz \rfloor^-\|_1 
- S^\cD_\epsilon(\lfloor nz \rfloor) - \eta n \epsilon^{1/d_1}\}.
\]

\begin{lemma}\label{l:sommable} 
For all $z \in (0,+\infty)^{d_1} \times \{0\}^{d_2}, \eta>0$ and for all $\epsilon>0$ small enough (depending on $z$ and $\eta$),
\[
\lim_{n\to \infty} \P[\cM(z,\epsilon,\eta,n)] = 0.
\]
\end{lemma}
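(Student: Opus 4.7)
The proof of Lemma~\ref{l:elementary}, read carefully, in fact produces a \emph{nice} path realising the upper bound $\|\lfloor nz\rfloor^-\|_1 - S^\cD_\epsilon(\lfloor nz\rfloor)$: starting from an optimal $s \in \cD_\epsilon(0,\lfloor nz\rfloor)$ and interpolating by shortest paths yields a path which ends at $\lfloor nz\rfloor^-$ and stays strictly $\prec \lfloor nz\rfloor$. Hence the inequality $T^{\text{nice}}_\epsilon(0,\lfloor nz\rfloor) \le \|\lfloor nz\rfloor^-\|_1 - S^\cD_\epsilon(\lfloor nz\rfloor)$ holds deterministically, and $\cM(z,\epsilon,\eta,n)$ is the event that some nice path $\pi$ \emph{strictly} beats this upper bound by at least $\eta n \epsilon^{1/d_1}$; one has to rule this out with high probability.

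The strategy I would follow is to extract from such a $\pi$ a competing monotone open sequence $s(\pi) \in \cD_\epsilon(0,\lfloor nz\rfloor)$ whose score $R-V$ essentially matches the gain of $\pi$, and derive a contradiction with the very definition of $S^\cD_\epsilon$. Letting $L(\pi)$ denote the length of $\pi$, $K(\pi)$ the number of visits of $\pi$ to open sites (excluding the endpoint) and $B(\pi) = L(\pi) - \|\lfloor nz\rfloor^-\|_1 \ge 0$ its excess length, the identity $\tau_\epsilon(\pi) = L(\pi) - K(\pi)$ rephrases $\cM$ as $K(\pi) - B(\pi) > S^\cD_\epsilon(\lfloor nz\rfloor) + \eta n \epsilon^{1/d_1}$. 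A natural candidate for $s(\pi)$ is then obtained by cutting $\pi$ into maximal sub-paths along which the horizontal projection is coordinate-wise non-decreasing, retaining the open sites of a well-chosen such piece (at most one per horizontal hyperplane, so as to enforce strict $\prec$-monotonicity), and encoding the vertical excursions of $\pi$ between consecutive retained sites as the cost $V(s(\pi))$. A careful bookkeeping of the sites discarded at each horizontal backtracking or at each vertical detour should yield a bound of the form $R(s(\pi)) - V(s(\pi)) \ge K(\pi) - C\, B(\pi) - o(n \epsilon^{1/d_1})$ for some $C = C(d)$; combined with $R(s) - V(s) \le S^\cD_\epsilon(\lfloor nz\rfloor)$, this forces $B(\pi) \ge c\, n \epsilon^{1/d_1}$ on the event $\cM$ for a small $c>0$.

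It then remains to show, with probability tending to $1$, that no nice path from $0$ to $\lfloor nz\rfloor^-$ of travel time less than $\|\lfloor nz\rfloor^-\|_1 - S^\cD_\epsilon(\lfloor nz\rfloor) - \eta n \epsilon^{1/d_1}$ has $B(\pi) \ge c\, n \epsilon^{1/d_1}$. Heuristically, such a nice path visits many sites, of which a proportion close to $1-\epsilon$ should be closed, giving a travel time of order $(1-\epsilon) L(\pi)$ and contradicting the assumed upper bound. To make this rigorous, I would fix the length $L = L(\pi)$, union-bound over the (at most $(2d)^L$) candidate nice paths of that length, apply a Chernoff estimate to the number of their closed visits, and sum over $L$ in the range $[\|\lfloor nz\rfloor^-\|_1 + c\, n \epsilon^{1/d_1},\, O(n)]$. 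The main obstacle is the deterministic extraction: keeping its loss below $\eta n \epsilon^{1/d_1}$ requires a delicate geometric decomposition of $\pi$, and the accounting between horizontal backtracking, vertical excursions and score loss is especially intricate when $d_2 \ge 1$, because the nice path then has macroscopic freedom in the vertical directions.
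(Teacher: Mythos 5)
Your high-level strategy---take an optimal nice path $\pi$, extract from it a $\prec$-monotone sequence of open sites to compare with $S^\cD_\epsilon(\lfloor nz\rfloor)$, and conclude that a large gap is unlikely---is indeed the paper's, but the two-step reduction you propose does not close.

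The deterministic step is false. The event $\cM$ does not force the optimal nice path to have excess length $B(\pi) \ge c\, n\epsilon^{1/d_1}$, and no inequality of the form $R(s(\pi)) - V(s(\pi)) \ge K(\pi) - C\,B(\pi) - o(n\epsilon^{1/d_1})$ can hold deterministically. Take $d_1 = 2$, $d_2 = 0$, $z=(1,1)$, and a configuration whose only open sites are $(1,1),(1,2),\dots,(1,m)$ with $m \approx 2\eta n\sqrt\epsilon$: these are pairwise $\prec$-incomparable, so $S^\cD_\epsilon(\lfloor nz\rfloor)=1$, yet an $\ell^1$-geodesic passing through all of them is a nice path with $B(\pi)=0$, $K(\pi)=m$, travel time $2n-2-m$; thus $\cM$ occurs while $B(\pi)=0$. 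The $m-1$ sites your extraction discards are not paid for by excess length; they are paid for, in the paper, by the factor $\epsilon^p$ for those $p$ sites to all be open. That probability cost, not a geometric excess-length cost, is what controls the discarded sites, and attempting to convert one into the other is where your argument breaks.

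The probabilistic step would also not close even if the reduction were granted. For $L \approx n\|z\|_1$ there are roughly $(2d)^L = e^{\Theta(n)}$ candidate paths, whereas the large-deviation probability for a fixed path of that length to collect $\Theta(n\epsilon^{1/d_1})$ open sites is $e^{-\Theta(n\,\epsilon^{1/d_1}\ln(1/\epsilon))} = e^{-o(n)}$ as $\epsilon \to 0$; the entropy wins and the sum over paths diverges for every $d_1$, the regime ``$\epsilon$ small'' being precisely the one the lemma requires. The paper avoids the entropy of paths altogether: the proof in Section~\ref{s:proof-sommable} records from the optimal $\pi$ only the combinatorial data $(q,(\ell_k),(j_k),(f(i)),(g(k,i)),(h(k)))$ encoding the positions of the visited open sites, derives the five deterministic inequalities of Lemma~\ref{l:reduction-chernov} (with $S^\cD_\epsilon$ handled via the lower bound~\eqref{e:upper_time_score} coming from the extracted skeleton itself, rather than by a separate concentration estimate), and union-bounds over that far smaller class with per-term cost $\epsilon^p$, turning Items 1--5 into exponential weights summed via the generating function $K(s)=\sum_{a\in\Z}e^{-s|a|}$. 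Your sketch has the right intuition---backward and sideways steps become too costly as $\epsilon\to 0$---but the accounting must be done against the probability $\epsilon^p$, not the path length, and must keep the detailed increment structure rather than collapsing it to a single excess-length statistic.
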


The basic intuition is quite simple. 
We are interested in geodesics from $0$ to a point $x$ in $(\N^*)^{d_1} \times \{0\}^{d_2}$.
The travel time of a path is its length minus the number of open sites it visits.
Using steps in $-e_1, \dots, -e_{d_1}$ makes the path longer.
When $\epsilon$ decreases to $0$, making such steps to reach an open site becomes too costly.
In the limit when $\epsilon$ tends to $0$, 
we can thus restrict our attention to paths which use no such steps 
and we can actually restrict to paths that only collects open sites $b(1), \dots, b(p)$ such that $(b(1),\dots,b(p))$ belongs to $\cD_\epsilon(x)$.
In other words, in the regime we are interested in, we can replace $T_\epsilon(0,x)$ by $\|x\|_1 - \cS^\cD_\epsilon(0,x)$.
The proof of this result is actually rather technical. 
We give it in Section \ref{s:proof-sommable}.

\begin{proof}[Proof of Proposition \ref{l:reduction_oriented} using Lemmas \ref{l:path-nice} and \ref{l:sommable}]
Fix $z \in (0,+\infty)^{d_1} \times \{0\}^{d_2}$ and $\eta>0$.
By Lemma \ref{l:sommable} we can fix $\epsilon_0=\epsilon_0(z,\eta)>0$ such that, for all $\epsilon \in (0,\epsilon_0)$,
\[
\lim_{n\to \infty} \P[\cM(z,\epsilon,\eta,n)] = 0.
\]
By Lemma \ref{l:elementary}, for all $n \ge 1$,
\[
T_\epsilon(0,\lfloor nz \rfloor) - n\|z\|_1 + S^\cD_\epsilon(\lfloor nz \rfloor) \le 0
\]
and therefore by the definition of the time constant $\mu_\epsilon (z)$ (see Lemma \ref{l:cte} in Appendix) and Lemma \ref{l:sigma_discrete},
\[
\mu_\epsilon(z) - \|z\|_1 + \sigma^\cD_\epsilon(z) \le 0.
\]
Moreover, for all $n \ge 1$,
\[
T^{\text{nice}}_\epsilon(0,\lfloor nz \rfloor) \ge 0 \text{ and } S^\cD_\epsilon(\lfloor nz \rfloor) \ge 0
\]
and then
\[
T^{\text{nice}}_\epsilon(0,\lfloor nz \rfloor) - \|\lfloor nz\rfloor^-\|_1 + S^\cD_\epsilon(\lfloor nz \rfloor)
\ge 
 -\eta n \epsilon^{1/d_1}\1_{\cM(z,\epsilon,\eta,n)^c} 
-
 \|\lfloor nz\rfloor^-\|_1\1_{\cM(z,\epsilon, \eta,n)}  .
\]
Therefore, using Lemmas \ref{l:sigma_discrete}, \ref{l:path-nice} and \ref{l:sommable}, we get
\[
\mu_\epsilon(z) - \|z\|_1 + \sigma^\cD_\epsilon(z) \ge -\eta \epsilon^{1/d_1}.
\]
This ends the proof of the proposition.
\end{proof}

\subsection{A related semi-continuous oriented model}
\label{s:continuous_model}
\subsubsection{The model}

We define an oriented semi-continuous model on $\R^{d_1} \times \Z^{d_2}$ as follows.
Let $\d x$ denote the Lebesgue measure on $\R^{d_1}$ and $\d v$ denote the counting measure on $\Z^{d_2}$.
Let $\xi$ be a Poisson point process on $\R^{d_1} \times \Z^{d_2}$ with intensity $\nu=\d x \otimes \d v$. 
In other words,
\[
\xi = \bigcup_{v \in \Z^{d_2}} \{(x,v), x \in \chi_v\}
\]
where $(\chi_v)_{v \in \Z^{d_2}}$ is a family of independent Poisson point processes on $\R^{d_1}$ with intensity $\d x$.
We call {\em particles} the points of $\xi$.

Let $\epsilon>0$.
For $a=(a_1,\ldots,a_{d_1}) \in \Z^{d_1}$ and $b \in \Z^{d_2}$, we define the $\epsilon-$cube $C_\epsilon(a,b)$ by
\begin{equation}\label{eq:def_Cube}
C_\epsilon(a,b) = \prod_{i=1}^{d_1}[a_i\epsilon^{1/d_1}, (a_i+1)\epsilon^{1/d_1}) \times \{b\}.
\end{equation}

We say that a $\epsilon$-cube is open if it contains at least one particle of $\xi$.
Otherwise we say that the cube is closed.
As $\nu[C_\epsilon(a,b)] = \epsilon$, each cube is open with probability $\tilde\epsilon = 1-\exp(-\epsilon)$.
Set 
\[
\tau_{\tilde\epsilon}(a,b) = \1_{C_\epsilon(a,b) \text{ is closed}}.
\]
The family $(\tau_{\tilde\epsilon}(a,b))_{a,b}$ is a family of independent Bernoulli random variables with parameter $1-\tilde\epsilon$.
We use this family to define the discrete oriented model with  parameter $\tilde\epsilon$.
Under this coupling, $(a,b) \in \Z^d$ is open (in the discrete model) if and only if $C_\epsilon(a,b)$ is open (in the semi-continuous model).
We refer to Figure \ref{Fig:discret_continu}.

\begin{figure}
\begin{center}
\includegraphics[width=12cm]{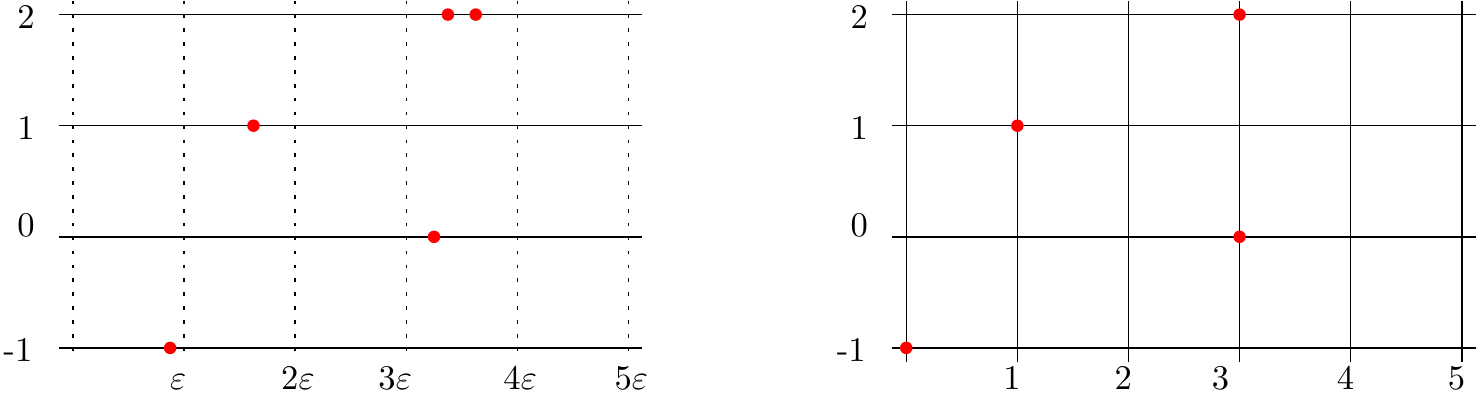}
\end{center}
\caption{Correspondence between the semi-continuous model and the discrete model in dimensions $(d_1,d_2)=(1,1)$. On the left, red points are distributed on each horizontal line as a Poisson point process with unit intensity. On the right, the  corresponding discrete site percolation on $\Z^2$ with parameter $\exp(-\varepsilon)$ (red sites have time $0$ ; a site is red with probability $\tilde\epsilon=1-\exp(-\epsilon)$).
Note that the horizontal scale is not the same on the left and on the right.} 
\label{Fig:discret_continu}
\end{figure}

We extend the definition of $\prec$ and $\preceq$ from $\Z^{d_1}\times \Z^{d_2}$ to $\R^{d_1} \times \Z^{d_2}$ in a natural way.
For any $\alpha \ge 0$, we introduce a new relation $\prec_\alpha$ on $\R^{d_1} \times \Z^{d_2}$ by
\begin{equation}
\label{e:ajoutM3}
x \prec_\alpha y \text{ holds when, for all }  i \in \{1,\dots,d_1\}, \; x_i + \alpha^{1/d_1} < y_i.
\end{equation}
Note that $\prec_0$ is equal to $\prec$.
For any $z \in (0,+\infty)^{d_1}\times \{0\}^{d_2}$ we denote by
$\cC_\alpha(z)$ ($\cC$ stands for {\em continuous}) the set of all monotone sequences of particles of $\xi$ between $0$ and $z$ which are $\alpha$-separated, which we define as the set
\[
\cC_\alpha(z)=
\{ (w(1),\dots,w(k)) : k \ge 0, w(1),\dots, w(k)  \in \xi
\text{ such that } 0 \preceq w(1) \prec_\alpha \cdots \prec_\alpha w(k) \prec_\alpha z\}.
\]
We extend in a natural way the definition of $R$ and $V$ from sequences of $\Z^{d_1}\times \Z^{d_2}$ to sequences of $\R^{d_1} \times \Z^{d_2}$.
We then define a score $S^{\cC}_\alpha(z)$ by
\[
S^{\cC}_\alpha(z)= \sup_{s \in \cC_\alpha(z)}\big(R(s)-V(s)).
\]
As in the discrete setting, we define a mean directional score $\sigma^\cC_\alpha$ as follows.

\begin{lemma} \label{l:sigmac} For all $z \in (0,+\infty)^{d_1} \times \{0\}^{d_2}$ and any $\alpha \ge 0$,
\[
\sigma^\cC_\alpha(z) 
:= \lim_{\lambda \to \infty \,,\, \lambda \in \mathbb{R}} \frac1 \lambda \E\big[S^{\cC}_\alpha(\lambda z)\big] 
= \sup_{\lambda \in \mathbb{R}^{+*}} \frac1 \lambda \E\big[S^{\cC}_\alpha(\lambda z)\big]\in (0,\infty].
\]
\end{lemma}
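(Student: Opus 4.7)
The plan is to follow the classical scheme for time-constant existence: establish super-additivity of $\lambda \mapsto \E[S^\cC_\alpha(\lambda z)]$ on $(0,+\infty)$, apply a continuous-parameter version of Fekete's lemma, and close with a short lower bound showing strict positivity. For super-additivity, I would first extend $S^\cC_\alpha$ to arbitrary endpoints $x \preceq y$ in $\R^{d_1}\times\{0\}^{d_2}$ in the obvious way. Fix $\lambda_1,\lambda_2 > 0$. Given $s=(w(1),\dots,w(k)) \in \cC_\alpha(\lambda_1 z)$ and $s'=(w'(1),\dots,w'(k')) \in \cC_\alpha(\lambda_1 z,(\lambda_1+\lambda_2)z)$, the concatenation $(s,s')$ lies in $\cC_\alpha((\lambda_1+\lambda_2)z)$: the only nontrivial new constraint $w(k) \prec_\alpha w'(1)$ is immediate from $w(k) \prec_\alpha \lambda_1 z$ and $\lambda_1 z \preceq w'(1)$. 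Along the concatenation $R$ is additive, while $V$ is subadditive by the triangle inequality applied to the single merging term (using $\projv(\lambda_1 z) = 0$ to rewrite $\|\projv(w'(1))-\projv(w(k))\|_1 \le \|\projv(w(k))\|_1+\|\projv(w'(1))\|_1$). Taking suprema, then expectations, and exploiting the horizontal translation invariance of $\xi$ (legitimate because $\lambda_1 z$ has zero vertical part) yields
\[
\E[S^\cC_\alpha((\lambda_1+\lambda_2)z)] \ge \E[S^\cC_\alpha(\lambda_1 z)] + \E[S^\cC_\alpha(\lambda_2 z)].
\]

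I would then invoke the continuous Fekete lemma for a non-negative super-additive function $f : \R_+^* \to [0,+\infty]$, namely $\lim_{\lambda\to\infty} f(\lambda)/\lambda = \sup_{\lambda > 0} f(\lambda)/\lambda$, valid in $[0,+\infty]$. The bound $\limsup f(\lambda)/\lambda \le \sup_{\mu>0} f(\mu)/\mu$ is trivial. For the matching lower bound on $\liminf$, fix $\lambda_0 > 0$, write any large $\lambda$ as $\lambda = n\lambda_0 + r$ with $0 \le r < \lambda_0$, iterate super-additivity and use $f(r) \ge 0$ to get $f(\lambda) \ge n f(\lambda_0)$, observe that $n/\lambda \to 1/\lambda_0$ as $\lambda \to \infty$, and finally take the sup over $\lambda_0$. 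This argument is insensitive to whether the sup is finite or $+\infty$.

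For strict positivity, I would exhibit a single $\lambda > 0$ with $\E[S^\cC_\alpha(\lambda z)] > 0$. Choosing $\lambda$ large enough that $\lambda z_i - \alpha^{1/d_1} > 0$ for every $i \in \{1,\dots,d_1\}$, the horizontal box $B = \prod_{i=1}^{d_1}[0, \lambda z_i - \alpha^{1/d_1})$ has positive Lebesgue volume $|B|$, and with probability $1 - e^{-|B|}$ the slice $\xi \cap (B \times \{0\})$ contains at least one particle $w$; on this event $(w) \in \cC_\alpha(\lambda z)$ achieves $R - V = 1 - 0 = 1$, so $\E[S^\cC_\alpha(\lambda z)] \ge 1 - e^{-|B|} > 0$ and thus $\sigma^\cC_\alpha(z) > 0$. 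No step is conceptually hard; the only places to be careful are the bookkeeping for the subadditivity of $V$ at the concatenation point and the use of an $[0,+\infty]$-valued Fekete lemma, which is justified here since $S^\cC_\alpha \ge 0$ pointwise.
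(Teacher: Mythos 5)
Your proof is correct and follows essentially the same route as the paper's: super-additivity of $\lambda\mapsto \E[S^\cC_\alpha(\lambda z)]$ via concatenation (additivity of $R$, subadditivity of $V$ at the merge point, horizontal translation invariance of the Poisson process), followed by the continuous Fekete lemma with the standard $\lambda = q\lambda_0 + r$ decomposition. The one thing you spell out that the paper leaves implicit is the strict positivity $\sigma^\cC_\alpha(z)>0$, via the elementary single-particle lower bound; the paper asserts membership in $(0,\infty]$ but its proof text only addresses the existence of the limit.
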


\begin{proof}
As in the proof of Lemma \ref{l:sigma_discrete} one checks that for any $\lambda, \lambda' \ge 0$,
\[
\E\big[S^{\cC}_\alpha((\lambda +\lambda') z)\big] \ge \E\big[S^{\cC}_\alpha(\lambda z)\big]+\E\big[S^{\cC}_\alpha(\lambda' z)\big].
\]
As $\E\big[S^{\cC}_\alpha(\lambda z)\big]$ is moreover non-negative for any $\lambda$, one deduces from a continuous version of
Fekete's subadditive lemma the required result.\footnote{Define $f:(0,+\infty)\to\R$ by $f(\lambda)=\E\big[S^{\cC}_\alpha(\lambda z)\big]$.
We know that $f$ is super-additive and non-negative.
Fix $\lambda_0 > 0$.
For any $\lambda>0$ write $\lambda = q(\lambda) \lambda_0 + r(\lambda)$ with $q(\lambda)\in \N$ and $r(\lambda) \in [0,\lambda_0)$.
As $f$ is super-additive and non-negative we get $f(\lambda) \ge q(\lambda) f(\lambda_0)$ and thus
\[
\liminf_{\lambda \to +\infty} \frac 1 \lambda f(\lambda) \ge  \liminf_{\lambda \to +\infty} \frac{q(\lambda)}{\lambda} f(\lambda_0) 
= \frac 1 {\lambda_0} f(\lambda_0).
\]
Therefore 
\[
\liminf_{\lambda \to +\infty} \frac 1 \lambda f(\lambda) \ge \sup_{\lambda_0>0} \frac 1 {\lambda_0} f(\lambda_0).
\]
As the inequality $\limsup_{\lambda \to +\infty} \frac 1 \lambda f(\lambda) \le \sup_{\lambda_0>0} \frac 1 {\lambda_0} f(\lambda_0)$
is straightforward, this ends the proof.}
\end{proof}

\subsubsection{Link between the oriented discrete model and the oriented semi-continuous model}\label{s:link_discret_continu}

We have already proved (see Proposition \ref{l:reduction_oriented}) that the time constant $\mu_\epsilon (z)$ behaves like $\|z\|_1 -\sigma_\epsilon^\cD(z) + o (\epsilon^{1/d_1})$ for small $\epsilon$. We now want to prove that $\sigma_\epsilon^\cD(z)$ is indeed of order $\epsilon^{1/d_1}$. This is done by proving that $\sigma_\epsilon^\cD(z)/ \epsilon^{1/d_1}$ actually converges to the mean continuous directional score $\sigma_0^{\cC}(z)$.
The aim of this section is thus to prove the following result.

\begin{lemma}\label{l:link_discrete_continous} For all $z \in (0,+\infty)^{d_1} \times \{0\}^{d_2}$,
\[
\lim_{\tilde\epsilon \to 0} \frac{\sigma^\cD_{\tilde\epsilon} (z)}{\tilde\epsilon^{1/{d_1}}} = \sigma_0^{\cC}(z).
\]
\end{lemma}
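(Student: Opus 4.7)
The plan is to sandwich $\sigma^\cD_{\tilde\epsilon}(z)/\epsilon^{1/d_1}$ between two semi-continuous scores via the Poisson coupling of Section~\ref{s:continuous_model}, and then let the separation parameter $\alpha$ tend to zero. Writing $\tilde\epsilon = 1-e^{-\epsilon}$, one has $\tilde\epsilon^{1/d_1}/\epsilon^{1/d_1}\to 1$ as $\epsilon\to 0$, so it suffices to prove that $\sigma^\cD_{\tilde\epsilon}(z)/\epsilon^{1/d_1}\to\sigma^\cC_0(z)$ when $\epsilon\to 0$.

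\textbf{Upper bound.} Given $(w(1),\dots,w(k))\in\cD_{\tilde\epsilon}(\lfloor nz\rfloor)$ with $w(i)=(a_i,b_i)$, pick in each open cube a particle $\tilde w(i)\in\xi\cap C_\epsilon(a_i,b_i)$. For $j\leq d_1$, $a_{i,j}<a_{i+1,j}$ combined with the cube geometry gives $\tilde w(i)_j<(a_{i,j}+1)\epsilon^{1/d_1}\leq a_{i+1,j}\epsilon^{1/d_1}\leq\tilde w(i+1)_j$, and similarly $0\preceq\tilde w(1)$ and $\tilde w(k)\prec n\epsilon^{1/d_1}z$. Thus $(\tilde w(1),\dots,\tilde w(k))\in\cC_0(n\epsilon^{1/d_1}z)$ with the same $R$ and $V$. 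Taking supremum, then expectation, dividing by $n\epsilon^{1/d_1}$ and letting $n\to\infty$ (so $\lambda=n\epsilon^{1/d_1}\to\infty$), Lemmas~\ref{l:sigma_discrete} and~\ref{l:sigmac} give $\sigma^\cD_{\tilde\epsilon}(z)/\epsilon^{1/d_1}\leq\sigma^\cC_0(z)$ for every $\epsilon>0$.

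\textbf{Lower bound.} Fix $\alpha>0$ and restrict to $\epsilon\in(0,\alpha]$. Given $(\tilde w(1),\dots,\tilde w(k))\in\cC_\alpha(n\epsilon^{1/d_1}z)$, let $w(i)=(a_i,b_i)$ be the unique cube index with $\tilde w(i)\in C_\epsilon(a_i,b_i)$; the cube is open, so $w(i)$ is open. For $j\leq d_1$, $\tilde w(i+1)_j-\tilde w(i)_j>\alpha^{1/d_1}\geq\epsilon^{1/d_1}$ forces $a_{i+1,j}\geq a_{i,j}+1$. At the endpoint, $\tilde w(k)_j<n\epsilon^{1/d_1}z_j-\alpha^{1/d_1}$ and $\alpha\geq\epsilon$ give $a_{k,j}\leq \tilde w(k)_j/\epsilon^{1/d_1}<nz_j-1$, hence $a_{k,j}\leq\lfloor nz_j\rfloor-1<\lfloor nz_j\rfloor$. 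Therefore $(w(1),\dots,w(k))\in\cD_{\tilde\epsilon}(\lfloor nz\rfloor)$ with the same $R$ and $V$, so $S^\cD_{\tilde\epsilon}(\lfloor nz\rfloor)\geq S^\cC_\alpha(n\epsilon^{1/d_1}z)$. The same limiting procedure yields $\sigma^\cC_\alpha(z)\leq\sigma^\cD_{\tilde\epsilon}(z)/\epsilon^{1/d_1}$ whenever $\epsilon\leq\alpha$.

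\textbf{Continuity and conclusion.} As $\alpha\downarrow 0$, the family $\cC_\alpha(\lambda z)$ increases to $\cC_0(\lambda z)$, so $S^\cC_\alpha(\lambda z)\uparrow S^\cC_0(\lambda z)$ pointwise; monotone convergence gives $\E[S^\cC_\alpha(\lambda z)]/\lambda\uparrow\E[S^\cC_0(\lambda z)]/\lambda$, and then by the supremum characterisation in Lemma~\ref{l:sigmac}, $\sigma^\cC_\alpha(z)\uparrow\sigma^\cC_0(z)$. Combining with the two bounds, $\liminf_{\epsilon\to 0}\sigma^\cD_{\tilde\epsilon}(z)/\epsilon^{1/d_1}\geq\sigma^\cC_\alpha(z)$ for every $\alpha>0$; letting $\alpha\to 0$ and using the upper bound closes the sandwich. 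The main obstacle is the endpoint bookkeeping in the lower bound: rounding $\tilde w(i)$ to its cube index $a_i$ costs up to one integer per horizontal coordinate, so the strict terminal inequality $a_k\prec\lfloor nz\rfloor$ demands an $\alpha$-margin that exactly absorbs this rounding, which forces $\alpha\geq\epsilon$ and makes the subsequent passage $\alpha\to 0$ via continuity of $\sigma^\cC_\alpha$ unavoidable.
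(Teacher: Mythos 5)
Your proof is correct and follows essentially the same strategy as the paper: sandwich $S^\cD_{\tilde\epsilon}(\lfloor nz\rfloor)$ between $S^\cC_\alpha(n\epsilon^{1/d_1}z)$ and $S^\cC_0(n\epsilon^{1/d_1}z)$ via the cube coupling (the paper's Lemma~\ref{l:coupling1}, which fixes $\alpha=\epsilon$ where you allow any $\alpha\ge\epsilon$, an inessential difference), then pass to $\sigma^\cD$ and $\sigma^\cC$ by taking $n\to\infty$, and finally close the gap by sending $\alpha\to 0$ via monotone convergence, which is exactly the paper's Lemma~\ref{l:sigmac2}.
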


To achieve this goal, we need two ingredients. The first one is a study of the dependence in $\alpha$ of $S^\cC_{\alpha}$ (see Lemma \ref{l:sigmac2}), that allows us to approximate $ \sigma_0^{\cC}$ by $\sigma_\alpha^{\cC}$ for $\alpha$ small enough. The second one (see Lemma \ref{l:coupling1}) is a comparison between the discrete score $S^\cD_{\tilde\epsilon}$ and two of its continuous counterparts, namely $S^\cC_{0}$ and $S^\cC_{\epsilon}$ for some $\epsilon$ depending on $\tilde\epsilon$.

We first prove the two following lemmas.

\begin{lemma}\label{l:sigmac2} Let $z \in (0,+\infty)^{d_1} \times \{0\}^{d_2}$.
\begin{enumerate}
\item For all $\alpha \ge 0$, the maps $\alpha \mapsto \E\big[S^{\cC}_\alpha(z)\big]$ and $\alpha \mapsto \sigma^{\cC}_\alpha(z)$ are non-increasing.
\item $\displaystyle{\lim_{\alpha \to 0} \E\big[S^{\cC}_\alpha(z)\big] = \E\big[S^{\cC}_0(z)\big]}$.
\item $\displaystyle{\lim_{\alpha \to 0} \sigma^{\cC}_\alpha(z) = \sigma^{\cC}_0(z)}$.
\end{enumerate}
\end{lemma}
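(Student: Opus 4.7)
The plan is to treat the three statements in order, the first being essentially a rewriting of the definitions, and the third following from the second by a standard swap of limit and supremum.

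For (1), the key observation is that when $0 \le \alpha \le \alpha'$ one has $\alpha^{1/d_1} \le (\alpha')^{1/d_1}$, so the relation $x \prec_{\alpha'} y$ defined in \eqref{e:ajoutM3} automatically implies $x \prec_\alpha y$. Consequently $\cC_{\alpha'}(z) \subseteq \cC_\alpha(z)$, and taking the supremum over each set yields $S^{\cC}_{\alpha'}(z) \le S^{\cC}_\alpha(z)$ pointwise on the sample space. Integrating gives the monotonicity of $\alpha \mapsto \E[S^{\cC}_\alpha(z)]$, and applying the same inequality with $\lambda z$ in place of $z$, dividing by $\lambda$, and taking the supremum over $\lambda > 0$ (using the representation of $\sigma^{\cC}_\alpha$ given in Lemma \ref{l:sigmac}) yields the monotonicity of $\sigma^{\cC}_\alpha(z)$.

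For (2), I would argue by monotone convergence, which has the advantage of handling the case where the limit is infinite. One inequality $\limsup_{\alpha \to 0} \E[S^{\cC}_\alpha(z)] \le \E[S^{\cC}_0(z)]$ is immediate from (1). For the converse, fix any $s = (w(1),\dots,w(k)) \in \cC_0(z)$. By definition this involves only finitely many strict inequalities between real numbers, namely $w(i)_j < w(i+1)_j$ for $1 \le i \le k-1$ and $1 \le j \le d_1$, together with $w(k)_j < z_j$. They therefore admit a common positive slack $\delta(s)>0$, so that $s \in \cC_\alpha(z)$ as soon as $\alpha \le \delta(s)^{d_1}$, which forces $R(s)-V(s) \le S^{\cC}_\alpha(z)$ for all such $\alpha$. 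Passing to the limit in $\alpha$ and then taking the supremum over $s \in \cC_0(z)$ gives $S^{\cC}_0(z) \le \lim_{\alpha \downarrow 0} S^{\cC}_\alpha(z)$ pointwise. Since the family is non-negative and monotone in $\alpha$, monotone convergence (applied along any decreasing sequence $\alpha_n \downarrow 0$, together with monotonicity of the expectations in $\alpha$) delivers $\lim_{\alpha \downarrow 0} \E[S^{\cC}_\alpha(z)] = \E[S^{\cC}_0(z)]$, finite or not.

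For (3), the direction $\lim_{\alpha \to 0} \sigma^{\cC}_\alpha(z) \le \sigma^{\cC}_0(z)$ is just (1). For the reverse, fix $\lambda > 0$ and apply (2) to $\lambda z$:
\[
\frac{1}{\lambda}\E\bigl[S^{\cC}_0(\lambda z)\bigr] \;=\; \lim_{\alpha \to 0}\frac{1}{\lambda}\E\bigl[S^{\cC}_\alpha(\lambda z)\bigr] \;\le\; \liminf_{\alpha \to 0}\sigma^{\cC}_\alpha(z).
\]
Taking the supremum over $\lambda > 0$ on the left, using the formula $\sigma^{\cC}_0(z) = \sup_{\lambda>0}\lambda^{-1}\E[S^{\cC}_0(\lambda z)]$ from Lemma \ref{l:sigmac}, gives the matching inequality, allowing for both sides to be $+\infty$. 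The only delicate point is to keep track of the possibility that $S^{\cC}_0(z)$ (and hence $\sigma^{\cC}_0(z)$) is infinite, which is precisely why monotone convergence is preferred over dominated convergence at step (2); no other technical obstacle arises.
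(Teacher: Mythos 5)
Your proof is correct and follows essentially the same route as the paper: monotonicity via inclusion of the families $\cC_\alpha(z)$, monotone convergence for item (2) resting on the identity $\bigcup_{\alpha>0}\cC_\alpha(z)=\cC_0(z)$, and the representation of $\sigma^\cC_\alpha$ as a supremum over $\lambda$ from Lemma~\ref{l:sigmac} for item (3). The only cosmetic difference is that you spell out the slack argument justifying $\bigcup_{\alpha>0}\cC_\alpha(z)=\cC_0(z)$ (which the paper asserts without comment) and you phrase item (3) via a pointwise bound and a $\liminf$ rather than by explicitly interchanging the two suprema; both presentations say the same thing.
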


Notice that the quantity $S^{\cC}_\alpha(z)$ is designed to be monotone in $\alpha$, which makes straightforward the convergence when $\alpha$ goes to $0$.

\begin{proof} The monotony of $\alpha \mapsto \E\big[S^{\cC}_\alpha(z)\big]$ 
is a consequence of the monotony (in the sense of inclusion) of $\alpha \mapsto \cC_\alpha(z)$.
The monotony of $\alpha \mapsto \sigma^{\cC}_\alpha(z)$ follows.
Then,
\begin{align*}
\lim_{\alpha \to 0} \E\big[S^{\cC}_\alpha(z)\big]
& = \E\left[\lim_{\alpha \to 0} S^{\cC}_\alpha(z)\right], \text{ by the monotone convergence theorem} \\
& = \E\left[\sup_{\alpha > 0} S^{\cC}_\alpha(z)\right], \text{ by monotonicity} \\
& = \E\left[\sup_{\alpha > 0} \sup_{s \in \cC_\alpha(z)}\big(R(s)-V(s))\right], \\
& = \E\left[\sup_{s \in \cC_0(z)}\big(R(s)-V(s))\right], \text{ as } \bigcup_{\alpha>0} \cC_\alpha(z)= \cC_0(z).
\end{align*}
This gives the second item of the lemma.
Then,
\begin{align*}
\lim_{\alpha \to 0} \sigma^{\cC}_\alpha(z)
 & = \sup_{\alpha > 0 } \sigma^{\cC}_\alpha(z), \text{ by monotonicity}\\
 & = \sup_{\alpha > 0 } \sup_{\lambda >0} \frac1 \lambda \E\big[S^{\cC}_\alpha(\lambda z)\big], \text{ by Lemma } \ref{l:sigmac} \\
 & = \sup_{\lambda >0} \frac1 \lambda \sup_{\alpha > 0 } \E\big[S^{\cC}_\alpha(\lambda z)\big],  \\
 & = \sup_{\lambda >0} \frac1 \lambda \E\big[S^{\cC}_0(\lambda z)\big], \text{ by the second item and monotonicity} \\
 & = \sigma^{\cC}_0(z),\text{ by Lemma } \ref{l:sigmac}.
\end{align*}
This ends the proof.
\end{proof}

\begin{lemma} \label{l:coupling1}
For all $z \in (0,+\infty)^{d_1}\times\{0\}^{d_2}$,
\[
\E\big[S^\cC_\epsilon(\epsilon^{1/d_1} z)\big] \le \E\big[S^\cD_{\tilde\epsilon}(\lfloor z \rfloor)\big] \le \E\big[S^\cC_0(\epsilon^{1/d_1} z)\big]
\]
where $\tilde{\varepsilon}:=1-\exp(-\varepsilon)$.
\end{lemma}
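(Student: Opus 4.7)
The plan is to establish both inequalities almost surely under the already-described coupling (a discrete site $(a,b)$ being open iff the $\epsilon$-cube $C_\epsilon(a,b)$ contains a particle of $\xi$), and then take expectations. Since the score $R-V$ depends only on the cardinality of a sequence and on the vertical projections of its elements — which live in $\Z^{d_2}$ in both the discrete and the semi-continuous setting — the whole argument reduces to transporting monotone sequences between $\cD_{\tilde\epsilon}(0,\lfloor z\rfloor)$ and $\cC_\alpha(\epsilon^{1/d_1}z)$ for the appropriate value of $\alpha$, in a way that preserves cardinality and vertical coordinates (and hence $R-V$).

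For the upper bound, given any $s=(w(1),\ldots,w(k))\in\cD_{\tilde\epsilon}(0,\lfloor z\rfloor)$ I select an arbitrary particle $\tilde w(i)\in\xi\cap C_\epsilon(w(i))$ for each $i$, which exists by openness of $w(i)$. The integer inequality $w(i)_j+1\le w(i+1)_j$ for $j\le d_1$ combined with the shape of the cubes yields
\[
\projh(\tilde w(i))_j < (w(i)_j+1)\epsilon^{1/d_1} \le w(i+1)_j\epsilon^{1/d_1} \le \projh(\tilde w(i+1))_j,
\]
and an analogous computation takes care of the boundary conditions $\tilde w(1)\succeq 0$ and $\tilde w(k)\prec_0 \epsilon^{1/d_1}z$. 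Thus $(\tilde w(1),\ldots,\tilde w(k))\in\cC_0(\epsilon^{1/d_1}z)$, with the same $R$ and $V$ as $s$, so $S^\cD_{\tilde\epsilon}(\lfloor z\rfloor)\le S^\cC_0(\epsilon^{1/d_1}z)$ almost surely.

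For the lower bound, given $\tilde s=(\tilde w(1),\ldots,\tilde w(k))\in\cC_\epsilon(\epsilon^{1/d_1}z)$ I project each particle to the integer index $w(i)$ of the cube containing it, namely $w(i)_j=\lfloor \projh(\tilde w(i))_j/\epsilon^{1/d_1}\rfloor$ for $j\le d_1$ and $\projv(w(i))=\projv(\tilde w(i))$; by the coupling, $w(i)$ is automatically open. The separation $\prec_\epsilon$ is chosen precisely so that the rounding produces strict integer inequalities: from $\projh(\tilde w(i))_j/\epsilon^{1/d_1}+1<\projh(\tilde w(i+1))_j/\epsilon^{1/d_1}$ and the elementary fact that $a+1<b$ forces $\lfloor a\rfloor<\lfloor b\rfloor$, one gets $w(i)\prec w(i+1)$; the same reasoning applied to $\tilde w(k)\prec_\epsilon \epsilon^{1/d_1}z$ gives $w(k)\prec\lfloor z\rfloor$. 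Once again cardinality and vertical projections are preserved, so $S^\cC_\epsilon(\epsilon^{1/d_1}z)\le S^\cD_{\tilde\epsilon}(\lfloor z\rfloor)$ almost surely.

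I do not expect any genuine obstacle: the argument is a pair of dual deterministic comparisons on sequences, made almost sure by the coupling. The point requiring the most care is the matching of $\alpha$ with the direction of the inequality — $\alpha=0$ is enough when one starts from integer separations and only needs real separations after scaling, while $\alpha=\epsilon$ is exactly what is needed to guarantee that floor-rounded coordinates remain strictly separated — together with the consistent handling of the endpoint terms in the definitions of $R$ and $V$.
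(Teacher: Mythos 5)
Your proof is correct and takes essentially the same approach as the paper: both use the coupling in which a discrete site is open iff the corresponding $\epsilon$-cube contains a particle, and both transport monotone sequences between $\cD_{\tilde\epsilon}$ and $\cC_\alpha$ (with $\alpha=0$ and $\alpha=\epsilon$ for the two directions) in a way that preserves cardinality and vertical projections, hence $R-V$. You merely spell out the floor-and-separation arithmetic that the paper leaves implicit; the content is identical.
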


\begin{proof}
Let $z \in (0,+\infty)^{d_1}\times\{0\}^{d_2}$. 
Let $s=(w(1),\dots,w(k)) \in \cC_\epsilon(\epsilon^{1/d_1} z)$. 
Each $w(i)$ belongs to a unique cube $C_\epsilon(w'(i))$. By construction, $w'(i)$ is an open site in the discrete model with parameter $1-\tilde{\varepsilon}$.
Hence, this defines a sequence of $\tilde{\varepsilon}$-open sites $s'=(w'(1),\dots,w'(k))$. Besides, by definition, the $\epsilon$-separation of the sequence $s$ implies that the discrete sequence $s'$ is monotone for the relation $\prec$ and so $s' \in \cD_{\tilde\epsilon}(0,\lfloor  z \rfloor)$. Moreover, we clearly have 
\[
 R(s)=R(s') \text{ and } V(s)=V(s').
\]
Thus 
\[
S^\cC_\epsilon(\epsilon^{1/d_1} z) \le S^\cD_{\tilde\epsilon}(\lfloor z \rfloor)
\]
and then
\[
\E\big[S^\cC_\epsilon(\epsilon^{1/d_1} z)\big] \le \E\big[S^\cD_{\tilde\epsilon}(\lfloor  z \rfloor)\big].
\]
Now let $s=(w(1),\dots,w(k)) \in \cD_{\tilde\epsilon}(0,\lfloor  z \rfloor)$.
Each cube $C_\epsilon(w(i))$ is open.
Therefore we can take in each $C_\epsilon(w(i))$ a particle $w'(i)$. The strict monotony of the horizontal coordinates of the sequence $(w(1),\ldots,w(k))$  implies that the sequence 
 $s'=(w'(1),\dots,w'(k))$  belongs to $\cC_0 (\epsilon^{1/d_1} z)$.
As above, $R(s)=R(s')$ and $V(s)=V(s')$ and we get
\[
\E\big[S^\cD_{\tilde\epsilon}(\lfloor z \rfloor)\big] \le \E\big[S^\cC_0(\epsilon^{1/d_1}  z)\big]. 
\]
This ends the proof.
\end{proof}

\begin{proof}[Proof of Lemma \ref{l:link_discrete_continous}]
From Lemma \ref{l:coupling1} we get
\[
\lim_{n\to \infty} \frac 1 n \E\big[S^\cC_\epsilon( n \epsilon^{1/d_1} z )\big] 
\le 
\lim_{n\to \infty} \frac 1 n \E\big[S^\cD_{\tilde\epsilon}( \lfloor n  z \rfloor)\big] 
\le 
\lim_{n\to \infty} \frac 1 n \E\big[S^\cC_0( n \epsilon^{1/d_1} z )\big].
\]	
From the definitions of $\sigma^\cD(\cdot)$ and $\sigma^\cC_\cdot(\cdot)$ we deduce
\[
\epsilon^{1/d_1} \sigma_\epsilon^{\cC}(z) \le \sigma^\cD_{\tilde\epsilon}(z) \le \epsilon^{1/d_1} \sigma_0^{\cC}(z).
\]
By the third item of Lemma \ref{l:sigmac2} we then get
\[
\lim_{\epsilon \to 0} \frac{\sigma^\cD_{\tilde\epsilon} (z)}{\epsilon^{1/{d_1}}} = \sigma_0^{\cC}(z).
\]
The result follows as $\tilde\epsilon = 1-\exp(-\epsilon) \sim \epsilon$ when $\epsilon \to 0$. 
\end{proof}

\subsubsection{Study of $\sigma^{\cC}_0$}

We proved in Lemma \ref{l:link_discrete_continous} that $\sigma_\epsilon^\cD(z)/ \epsilon^{1/d_1}$ converges to the mean continuous directional score $\sigma_0^{\cC}(z)$. It remains to check that this limit is finite, and to clarify its dependence in $z$.

Recall that $\gamma(z)$ is the geometric mean of the non-zero coefficients of $z$ (see \eqref{e:gamma}).
Recall also that $\1_{d_1}$ denotes the vector of $(0,+\infty)^{d_1} \times \{0\}^{d_2}$ whose non-zero coefficients equal $1$.

\begin{lemma} \label{l:valuesigmac}
For all $z \in (0,+\infty)^{d_1} \times \{0\}^{d_2}$, 
\[
\sigma^\cC_0(z) = \gamma(z) \sigma^\cC_0(\1_{d_1}) < \infty.
\]
\end{lemma}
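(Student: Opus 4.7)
My plan has two parts: first the scaling identity $\sigma^\cC_0(z) = \gamma(z)\sigma^\cC_0(\1_{d_1})$, then the finiteness $\sigma^\cC_0(\1_{d_1}) < \infty$.

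The scaling identity should follow from a deterministic volume-preserving change of coordinates. I would introduce $\Phi_z \colon \R^{d_1}\times\Z^{d_2} \to \R^{d_1}\times\Z^{d_2}$ defined by $\Phi_z(x_1,\ldots,x_{d_1},v) = (\gamma(z)x_1/z_1, \ldots, \gamma(z)x_{d_1}/z_{d_1}, v)$. By the very definition \eqref{e:gamma} of $\gamma(z)$, the horizontal Jacobian equals $\gamma(z)^{d_1}/\prod_i z_i = 1$; hence $\Phi_z$ preserves the intensity $\d x\otimes\d v$, so $\Phi_z(\xi)$ has the same law as $\xi$. The horizontal scaling factors $\gamma(z)/z_i$ are positive, so $\Phi_z$ preserves $\prec$; the vertical coordinates are untouched, so $R$ and $V$ are preserved on every sequence; and $\Phi_z(z) = \gamma(z)\1_{d_1}$. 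Therefore $S^\cC_0(z)$ and $S^\cC_0(\gamma(z)\1_{d_1})$ have the same distribution. Applying this to $\lambda z$, using $\gamma(\lambda z) = \lambda\gamma(z)$, dividing by $\lambda$ and letting $\lambda\to\infty$ via Lemma~\ref{l:sigmac} produces the announced scaling.

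For the finiteness, I expect the main obstacle. It suffices to bound $\E[S^\cC_0(\lambda\1_{d_1})]\le C\lambda$ uniformly in $\lambda$ for some constant $C$, and I plan a block decomposition. Given any admissible $s$, split it into maximal runs of consecutive particles in a common vertical layer; each block $B$ in layer $v_B$ spans a horizontal rectangle $R_B = \prod_i[a_i^B,b_i^B]$ determined by its first and last particle. Since the blocks are $\prec$-ordered, their projections on each horizontal axis are pairwise disjoint sub-intervals of $[0,\lambda]$. The particles of $B$ form a strictly monotone sub-sequence of $\chi_{v_B}\cap R_B$, so by the Ulam--Bollob\'as--Brightwell theorem in dimension $d_1$, the expected number of particles in $B$ is at most $c_{d_1}(\prod_i(b_i^B-a_i^B))^{1/d_1}$. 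The inequality of arithmetic and geometric means combined with the disjointness of projections yields
\[
\sum_B\Bigl(\prod_{i=1}^{d_1}(b_i^B-a_i^B)\Bigr)^{1/d_1}\le\frac{1}{d_1}\sum_B\sum_{i=1}^{d_1}(b_i^B-a_i^B)\le\lambda.
\]
A quantitative LIS concentration bound together with a union bound over a suitably discretized family of possible block structures---truncated to layers with $\|v\|_1$ of order at most $\lambda$, since the inequality $V(s)\ge 2\max_i\|\projv(w(i))\|_1$ forces sequences reaching farther out to contribute negatively to the score---then yields $\E[S^\cC_0(\lambda\1_{d_1})]\le c_{d_1}\lambda(1+o(1))$, whence $\sigma^\cC_0(\1_{d_1})\le c_{d_1} < \infty$. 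The main technical subtlety lies in controlling this sup over random block structures; the vertical truncation is essential, as it limits the relevant family to polynomially many structures in $\lambda$ and makes the union bound tractable.
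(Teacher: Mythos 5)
Your scaling argument is correct and is essentially the paper's own: the linear map that multiplies the $i$-th horizontal coordinate by $\gamma(z)/z_i$ is Lebesgue-preserving precisely because $\gamma(z)^{d_1}=\prod_i z_i$, it preserves $\prec$, sends $z$ to $\gamma(z)\1_{d_1}$, and leaves $R$ and $V$ invariant; hence $S^\cC_0(z)$ and $S^\cC_0(\gamma(z)\1_{d_1})$ have the same distribution, and the identity follows by sending $\lambda\to\infty$ via Lemma~\ref{l:sigmac}. No issue here.

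The finiteness part is where your route diverges from the paper's, and where the genuine gap is. The paper avoids any appeal to increasing-subsequence theory: it bounds the tail $\P[S^\cC_0(\lambda\1_{d_1})\ge\lambda A]$ by a first-moment computation using the multivariate Mecke formula, summing over the length $n\ge\lambda A$ of the candidate sequence, integrating out the $n$ strictly ordered horizontal positions (giving $(\lambda^n/n!)^{d_1}$) and summing over the $n$ vertical increments subject to $n-V\ge\lambda A$ (giving $e^{n-\lambda A}K^{d_2n}$); for $A$ above an explicit threshold the resulting series is $\le e^{-\lambda A}$ and finiteness follows at once. Your block-decomposition plan—disjoint layer-blocks, a $d_1$-dimensional Bollob\'as--Winkler bound plus AM--GM, then a union bound over block structures—breaks at the union-bound step. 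After discretization and vertical truncation each block is specified by roughly $\lambda^{2d_1+d_2}$ choices, but the \emph{number} of blocks is $1$ plus the number of layer changes, and a maximizing sequence can change layers $\Theta(\lambda)$ times, so the family of block structures has size $\lambda^{\Theta(\lambda)}$, not polynomial in $\lambda$ as you assert. Controlling that would force you to prove concentration for the increasing-subsequence count in a box, uniformly over an exponentially large family—possible in principle but much heavier than the paper's calculation. The vertical truncation is also circular as stated: $V(s)\ge 2\max_i\|\projv(w(i))\|_1$ only bounds the vertical excursion by $R(s)/2$, and $R(s)$ is exactly the quantity you have not yet bounded. The paper's Mecke computation disposes of the vertical combinatorics and the horizontal entropy in one elementary stroke, with no external input; I'd recommend that route.
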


The proof is divided in two steps. We first prove that $\sigma^\cC_0(z) = \gamma(z) \sigma^\cC_0(\1_{d_1})$. This follows easily from a scaling argument for Poisson point processes. Then we prove that $ \sigma^\cC_0(\1_{d_1}) < \infty$. This is done through a control of the tail of the distribution of $\cS^\cC_0(\lambda \1_{d_1})$.

\begin{proof}
Let $z \in (0,+\infty)^{d_1} \times \{0\}^{d_2}$ and write $\gamma$ for $\gamma(z)$.
Let $\phi : \R^d \to \R^d$ be the linear map defined by 
\[
\phi(x_1,\dots,x_d) = (\gamma x_1/z_1, \dots, \gamma x_{d_1}/z_{d_1}, x_{d_1+1},\dots,x_d).
\]
Then $\phi(z)=\gamma \1_{d_1}$.
Making explicit in the notations the dependence on the configuration $\xi$ of particles, we have
\begin{align*}
\sup_{s \in \cC_0(z)(\xi)}\big(R(s)-V(s)\big) 
& = \sup_{s \in \cC_0( \gamma \1_{d_1})(\phi(\xi))}\big(R(\phi^{-1}(s))-V(\phi^{-1}(s))\big) \\
& = \sup_{s \in \cC_0( \gamma \1_{d_1})(\phi(\xi))}\big(R(s)-V(s)\big).
\end{align*}
But $\phi$ preserves the Lebesgue measure and therefore $\phi(\xi)$ has the same distribution as $\xi$.
Integrating the previous equality yields
\[
\E\big[S^{\cC}_0(z)\big] =  \E\big[S^{\cC}_0(\gamma(z)\1_{d_1})\big].
\] 
Applying this for all $\lambda z$ we get
\[
\lim_{\lambda \to \infty } \frac 1 \lambda \E\big[S^{\cC}_0(\lambda z)\big] 
= 
\lim_{\lambda \to \infty } \frac 1 \lambda \E\big[S^{\cC}_0(\lambda \gamma(z)\1_{d_1})\big] 
= \gamma(z) \lim_{\lambda \to \infty } \frac 1 \lambda \E\big[S^{\cC}_0(\lambda\1_{d_1})\big]
\]
and thus
\[
\sigma^\cC_0(z) = \gamma(z) \sigma^\cC_0(\1_{d_1}).
\]
It remains to prove that $\sigma^\cC_0(\1_{d_1})$ is finite.
Let $A>0$ and $\lambda>0$.
We have
\begin{align*}
\P[\cS^\cC_0(\lambda \1_{d_1}) \ge\lambda A] 
 & \le \E\Big[\sum_{n \ge \lambda A}\sum_{s=(w(1), \dots, w(n)) \in \cC_0(\lambda \1_{d_1})} \1_{\{R(s)-V(s) \ge \lambda A\}} \Big]\\
 & =  \sum_{n \ge\lambda A} \E\Big[\sum_{w(1), \dots, w(n) \in \xi} \1_{\{\forall j \in \{1,\dots,d_1\},  0 \le w(1)_j < \cdots < w(n)_j < \lambda\}}
 \1_{\{n-V(w(1), \dots, w(n)) \ge \lambda A\}} \Big].
\end{align*}
Using the multivariate Mecke formula (see Theorem 4.4 in \cite{last_penrose_2017}) for the point process $\xi$, we get 
\begin{align*}
& \P[\cS^\cC_0(\lambda \1_{d_1}) \ge \lambda A]  \\
 & \le \sum_{n \ge \lambda A} \int_{(\R^{d_1})^n} \d a(1) \dots \d a(n) \sum_{b(1), \dots, b(n) \in \Z^{d_2}} 
 \1_{\{\forall j \in \{1,\dots,d_1\},  0 \le a(1)_j < \cdots < a(n)_j < \lambda\}}
 \1_{\{n-V((a(1),b(1)), \dots, (a(n),b(n))) \ge \lambda A\}}.
\end{align*}
With a change of variable we get, for all $n \ge 1$,
\[
\sum_{b(1), \dots, b(n) \in \Z^{d_2}}  \1_{\{n-V((a(1),b(1)), \dots, (a(n),b(n))) \ge\lambda A\}} 
\le
\sum_{\Delta(1), \dots, \Delta(n) \in \Z^{d_2}} \1_{\{n-\sum_{i=1}^n \|\Delta(i)\|_1 \ge \lambda A\}} 
\le e^{n-\lambda A}  K^{d_2n}
\]
where
\[
K=\sum_{u \in \Z} \exp(-|u|) < \infty.
\]
On the other hand,
\[
\int_{(\R^{d_1})^n} \d a(1) \dots \d a(n)  \1_{\{\forall j \in \{1,\dots,d_1\},  0 \le a(1)_j < \cdots < a(n)_j < \lambda\}} 
= \Big(\frac{\lambda^n}{n!}\Big)^{d_1}\le \Big(\frac{e\lambda}n\Big)^{nd_1}.
\]
Thus,
\begin{align*}
\P[\cS^\cC_0(\lambda \1_{d_1}) \ge \lambda A]  
& \le \sum_{n \ge \lambda A} e^{n-\lambda A}  K^{d_2n}\Big(\frac{e\lambda}n\Big)^{nd_1}   \\
& = e^{-\lambda A} \sum_{n \ge \lambda A}\Big(\frac{e^{1+d_1}\lambda^{d_1}K^{d_2}}{n^{d_1}}\Big)^n \\
& \le e^{-\lambda A} \sum_{n \ge \lambda A}\Big(\frac{e^{1+d_1}K^{d_2}}{A^{d_1}}\Big)^n \\
& \le e^{-\lambda A} \sum_{n \ge 1}\Big(\frac{e^{1+d_1}K^{d_2}}{A^{d_1}}\Big)^n. \\
\end{align*}
Fix $A_0>0$ such that 
\[
\frac{e^{1+d_1}K^{d_2}}{A_0^{d_1}} = \frac 1 2.
\]
Then, for all $A \ge A_0$,
\[
\P[\cS^\cC_0(\lambda \1_{d_1}) \ge \lambda A]  \le e^{-\lambda A}.
\]
Therefore, 
\[
\E\left[\frac{\cS^\cC_0(\lambda \1_{d_1})}{\lambda}\right]\le A_0 + \int_{A_0}^{+\infty} e^{-\lambda A} \d A \le A_0 + \frac 1 \lambda
\]
and then
\[
\sigma^{\cC}_0(\1_{d_1}) = \lim_{\lambda \to \infty} \E\Big[\frac{\cS^\cC_0(\lambda \1_{d_1})}{\lambda}\Big]  \le A_0 < \infty.
\]
\end{proof}

\subsection{Proof of Theorem \ref{t}} 

\label{s:end_proof}

The proof of Theorem \ref{t} is now straightforward (but recall that we postponed the proof of Lemma \ref{l:sommable}). 
Write
\[
\frac{\|z\|_1-\mu_\epsilon(z)}{\epsilon^{1/d_1}} 
= \frac{\|z\|_1-\sigma_\epsilon^\cD(z)-\mu_\epsilon(z)}{\epsilon^{1/d_1}} + \frac{\sigma_\epsilon^\cD(z)}{\epsilon^{1/d_1}}.
\]
The first term tends to $0$ when $\epsilon$ tends to $0$ by Proposition \ref{l:reduction_oriented}.
The second term tends to $\sigma_0^\cC(z)$ by Lemma \ref{l:link_discrete_continous}.
By Lemma \ref{l:valuesigmac} we thus get 
\[
\lim_{\epsilon\to 0}  \frac{\|z\|_1-\mu_\epsilon(z)}{\epsilon^{1/d_1}}  =  \gamma(z) \sigma^\cC_0(\1_{d_1}).
\]
Thus the theorem holds with 
\begin{equation}
\label{e:expressionC}
C(d_1,d_2)= \sigma^\cC_0(\1_{d_1})
\end{equation}
which is finite by Lemma \ref{l:valuesigmac}.
Let us recall
 that $\sigma^\cC_0(\1_{d_1})$ is the directional score in direction $\1_{d_1}$ in the semi-continuous model of dimension $d_1+d_2$. So, $\sigma^\cC_0(\1_{d_1})$  also depends on $d_2$ although it is not recalled in our notation.

\subsection{Proof of Lemma \ref{l:sommable}}
\label{s:proof-sommable}

In this section we complete the proof of Theorem \ref{t} by giving a proof of Lemma \ref{l:sommable}. The idea is to associate with each nice path from $0$ to $\lfloor nz \rfloor^-$ an oriented path in the \emph{horizontal} directions which have almost the same travel time. To this end, we will  need first to introduce some notations.

We refer to Figures \ref{Fig:squelette}, \ref{Fig:detour} and \ref{Fig:diagonale} for an illustration of all the notations we introduce now.
Let $(b(1), \dots,b(p))$ be a sequence of sites of $\Z^d$ such that 
\begin{equation}\label{e:becausenice}
b(i) \prec\lfloor nz \rfloor \text{ for all } i \in \{1,\dots, p\}.
\end{equation}
Set also $b(0)=0$ and $b(p+1)=\lfloor nz \rfloor$.
We  extract a finite monotone sub-sequence $(b(i_k))_{0\le k\le q}$ of $(b(0),\dots,b(p+1))$ recursively  as follows: 
\begin{itemize}
\item[$\bullet$] Set $i_0=0$
\item[$\bullet$] If $i_k< p+1$,  set $i_{k+1}=\inf\{i>i_k, b(i_k) \prec b(i)\}$. 
\end{itemize}
Note that, if $i_k< p+1$, Equation \eqref{e:becausenice} implies that $\inf\{i>i_k, b(i_k) \prec b(i)\}\le p+1$ and so $i_{k+1}$ is well defined.
We thus get an integer $q\ge 1$ and a sequence $b(i_0) \prec \cdots \prec b(i_q)$ of length $q+1$ such that  $b(i_0)=b(0)=0$, $b(i_q)=b(p+1)=\lfloor nz \rfloor$.
For each $k \in \{0,\dots,q-1\}$ we write
\[
\ell_k=i_{k+1}-i_{k}-1 \ge 0 \text{ and } (c(k,0),\dots,c(k,\ell_k)) = (b(i_{k}), \dots, b(i_{k+1}-1)).
\]
In other words, 
$c(k,0)$ is a new notation for $b(i_k)$ 
and $(c(k,1),\dots,c(k,\ell_k))$ is the sequence of $b(i)$ that are strictly between (in the order given by the index) $b(i_{k})$ and $b(i_{k+1})$.
Note that $\ell_k$ can be equal to $0$.
We also write $c(q,0)=b(i_q)=\lfloor nz \rfloor$.
By construction, 
\begin{equation}\label{e:salut}
\forall k \in \{0,\dots,q-1\}, \forall i \in \{1,\dots,\ell_k\}, c(k,0) \not\prec c(k,i)
\end{equation}
and
\begin{equation}\label{e:hi}
0 = c(0,0) \prec c(1,0) \prec \dots \prec c(q,0) = \lfloor nz \rfloor.
\end{equation}

For each $i \in \{1,\dots,p\}$ write 
\[
f(i) = \projv(b(i) - b(i-1)).
\]
For all $k \in \{0,\dots,q-1\}$ and $i \in \{1,\dots,\ell_k\}$, set
\[
g(k,i)=\projh(c(k,i)-c(k,i-1)) \text{ and } h(k)=\projh(c(k+1,0)-c(k,0))=\projh(b(i_{k+1})-b(i_k)).
\]
Note also that we have the following relation between $p,q$ and the sequence $(\ell_k)_{0\le k \le q-1}$:
\[
p=q-1+\sum_{k=0}^{q-1} \ell_k.
\]
An illustration of these notations is given in Figures \ref{Fig:squelette} and \ref{Fig:detour} in dimension $d_1=d_2=1$.

Moreover, by construction of the monotone sub-sequence, for each $k\in\{0,\ldots,q-1\}$, if $\ell_k\neq 0$,
$$b(i_k)  \not\prec b(i_{k+1}-1)= c(k,\ell_k) .$$
Hence, we can define some direction  $j_k\in\{1,\ldots,d_1\}$ such that
\begin{equation}\label{e:properties_detours_1}
j_k:=\inf\{j,\; b(i_{k+1}-1)_{j}\le b(i_k)_{j}\}.
\end{equation}
In words, $j_k$ is an \emph{horizontal} direction for which $b(i_{k+1}-1)$ is \emph{on the left} of 
$b(i_k)$. We set $j_k=1$ if $\ell_k=0$. An example is drawn in Figure \ref{Fig:diagonale} in dimension $d_1=2$ and $d_2=0$.

\begin{figure}
\begin{center}
\includegraphics[width=11.8cm]{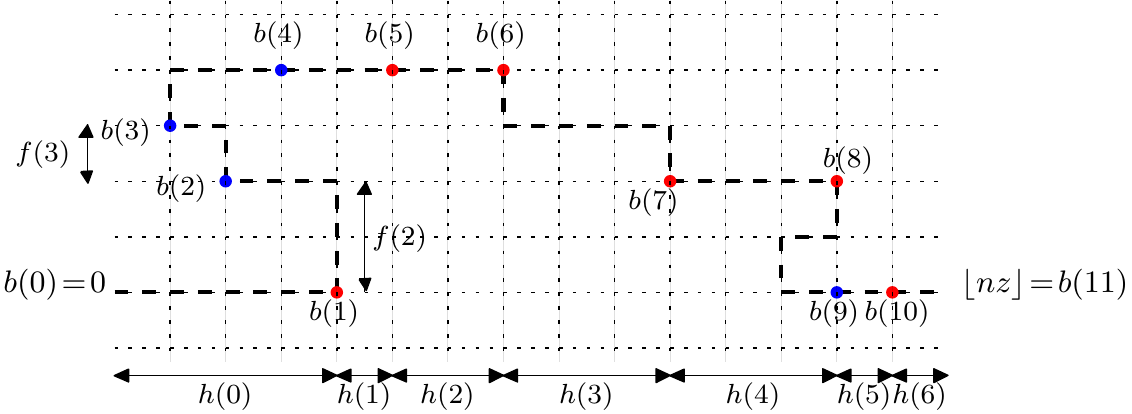}
\end{center}
 \caption{Illustration in dimensions $(d_1,d_2)=(1,1)$ of the notations $(b(i),f(i),q,\ell_k,h(k))$ introduced in Section \ref{s:proof-sommable}. The sites $(b(i))_{1\le i\le  10}$ are  given. For $1\le i \le 10$, the vector $f(i)$  of $\Z^{d_2}$ is defined as the \emph{vertical} increment between the two consecutive sites $b(i-1)$ and $b(i)$.
 The sites $b(i)$ drawn in red correspond to the monotone sub-sequence $(b(i_k))_{k\ge 1}$. The integer $q-1$ is equal to the number of red sites, so here $q=7$.  For $0\le k < q$, the vector $h(k)$  of $\Z^{d_1}$ is defined as the \emph{horizontal} increment between the two consecutive  red sites $b(i_{k})$ and $b(i_{k+1})$ and the integer $\ell_k$ denotes the number of blue sites between these two  red sites. For example, here  $\ell_1=3$, $\ell_5=1$ and $\ell_k=0$ for $k\notin \{1,5\}$.  }
\label{Fig:squelette}
\begin{center}
\includegraphics[width=3.4cm]{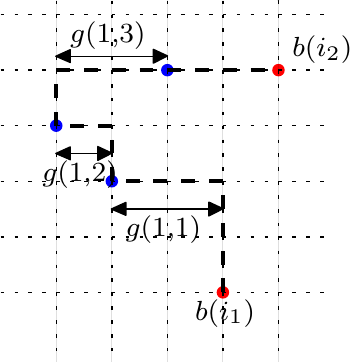}
\end{center}
       \caption{Illustration in dimensions $(d_1,d_2)=(1,1)$ of the notations $g(k,i)$ introduced in Section \ref{s:proof-sommable}. The picture represents a detour of the path \emph{to the left} between two consecutive sites of the monotone sub-sequence $b(i_1)\prec b(i_2)$. The sequence $(g(1,1),\ldots,g(1,\ell_k))$ taking values in $\Z^{d_1}$ is equal to the \emph{horizontal} increments between two consecutive sites. Recall that we do not take into account the increment between the last blue site and $b(i_2)$.     
}\label{Fig:detour}
\begin{center}
\includegraphics[width=9cm]{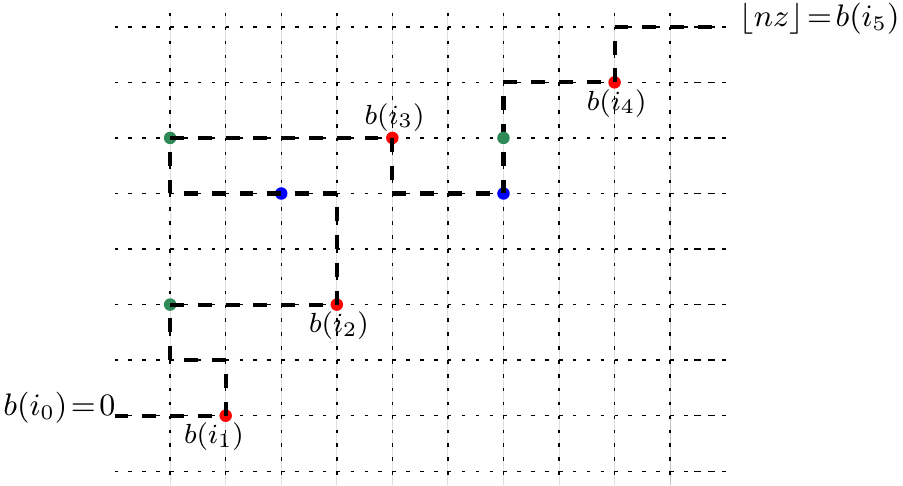}
\end{center}
\caption{Illustration of the  notation $(j_k)$ introduced in Section \ref{s:proof-sommable} in dimensions $(d_1,d_2)=(2,0)$. The blue, green and red sites represent the sequence $(b(i))_{1\le i\le  9}$  of given sites. The sites  drawn in red correspond to the monotone sub-sequence $(b(i_k))_k$. By construction, if there exists  sites between two consecutive red sites $b(i_k)$, $b(i_{k+1})$, the last one (drawn here in green) is either on the left or below (or both) of $b(i_k)$. Then,  we set $j_k=1$ if this green site is on the left of $b(i_k)$ and $j_k=2$ if not (and so, in this latter case, the green site is necessarily below $b(i_k)$). By convention, we  also set $j_k=1$ if there is no site between $b(i_k)$ and $b(i_{k+1})$, {\it i.e.}, if $\ell_k=0$.  For example, here, we have  $j_0=j_1=j_2=j_4=1$ and $j_3=2$.
 }
\label{Fig:diagonale}
\end{figure}

We just summarize here the different notations for a sequence of sites $(b(1),\ldots,b(p))$ :
\begin{itemize}
\item The sequence $(f(i))_{1 \le i \le p}$  of vectors of $\Z^{d_2}$ are the \emph{vertical} increments between two consecutive terms of the sequence.
\item The integer $q \ge 1$ is the number of terms of the monotone sub-sequence
and  for  $k \in \{0,\dots,q-1\}$,
\begin{itemize}
\item the vector $h(k)$ of  $\N^{d_1}$ is  the \emph{horizontal} increment between two consecutive terms of the monotone sub-sequence.
\item the integer $\ell_k \ge 0$ is the number of terms between two consecutive terms of the monotone sub-sequence and  the sequence $(g(k,i))_{1 \le i \le \ell_k}$  of vectors of $\Z^{d_1}$ are the \emph{horizontal} increments between these terms.
\item  the integer $j_k \in \{1,\dots,d_1\}$ is an \emph{horizontal} direction such that 
$b(i_{k+1}-1)_{j_k}\le b(i_k)_{j_k}$.
\end{itemize}
\end{itemize}

We now gather in the following lemma properties that some sequence $(b(i))$ must satisfy if the event $\cM(z,\epsilon,\eta,n)$ occurs.
\begin{lemma}\label{l:reduction-chernov}
Let $z\in(0,+\infty)^{d_1}\times\{0\}^{d_2}, \epsilon>0$ and $\eta>0$. Let $n \ge 1$ be large enough to ensure $0 \prec \lfloor nz \rfloor^-$.
On $\cM(z,\epsilon,\eta,n)$ there exist $p\ge 0$ and a sequence of open sites $b(1),\ldots,b(p)$ such that (with the notations defined above):
\begin{enumerate}
\item $0 \le n\|z\|_1 - \sum_{k=0}^{q-1}\|h(k)\|_1$.
\item $0 \le p- \sum_{i=1}^{p} \|f(i)\|_1 $.
\item $0 \le -\sum_{i=1}^{\ell_k} g(k,i)_{j_k}$ for each $k \in \{0,\dots,q-1\}$.
\item $0 \le  
\ell_k +  \sum_{1\le j \le d_1, j \neq j_k } h(k)_j-\sum_{i=1}^{\ell_k} \sum_{1\le j \le d_1, j \neq j_k }g(k,i)_j$
for each $k \in \{0,\dots,q-1\}$.
\item $0 \le 
-\sum_{k=0}^{q-1} \sum_{i=1}^{\ell_k} |g(k,i)_{j_k}| 
- \sum_{k=0}^{q-1} \sum_{1 \le j \le d_1 ,j \neq j_k} \sum_{i=1}^{\ell_k} \big(g(k,i)_j\big)_-
+ \sum_{k=0}^{q-1} \ell_k
 -\eta n \epsilon^{1/d_1}$.
 \end{enumerate}
\end{lemma}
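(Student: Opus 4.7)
On the event $\cM(z,\epsilon,\eta,n)$ I fix a nice path $\pi$ from $0$ to $\lfloor nz\rfloor^-$ of length $N$ whose travel time $\tau_\epsilon(\pi)=N-p$ witnesses $\cM$ (i.e. satisfies $N-p<\|\lfloor nz\rfloor^-\|_1 - S^\cD_\epsilon(\lfloor nz\rfloor)-\eta n\epsilon^{1/d_1}$), and take for $b(1),\dots,b(p)$ the sequence of open sites visited by $\pi$ in order. With the conventions $b(0)=0$ and $b(p+1)=\lfloor nz\rfloor$, all the auxiliary data $(q,i_k,c(k,i),h(k),g(k,i),f(i),\ell_k,j_k)$ are then determined. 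Inequality (1) follows at once from the telescoping identity $\sum_{k=0}^{q-1} h(k)=\projh(\lfloor nz\rfloor)=\lfloor nz\rfloor$ together with the componentwise positivity of each $h(k)$ (a consequence of the strict $\prec$-monotonicity of the sub-sequence), which yields $\sum_k\|h(k)\|_1=\|\lfloor nz\rfloor\|_1\le n\|z\|_1$. Inequality (3) is exactly the defining property of $j_k$: $\sum_i g(k,i)_{j_k}=b(i_{k+1}-1)_{j_k}-b(i_k)_{j_k}\le 0$.

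For the remaining inequalities, the two key a priori estimates are: the triangle length bound $N\ge\sum_{i=1}^{p+1}\|b(i)-b(i-1)\|_1-d_1$ (the $-d_1$ accounting for the offset between the endpoint $\lfloor nz\rfloor^-$ of $\pi$ and $b(p+1)=\lfloor nz\rfloor$), split into horizontal and vertical parts; and the score bound $S^\cD_\epsilon(\lfloor nz\rfloor)\ge(q-1)-V_{\mathrm{mono}}$ with $V_{\mathrm{mono}}=\sum_k\|\projv(b(i_{k+1})-b(i_k))\|_1\le\sum_i\|f(i)\|_1$, obtained by using the monotone sub-sequence as a witness in the definition of $S^\cD$. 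For (2), the crudest horizontal triangle estimate $\sum_i\|\projh(b(i)-b(i-1))\|_1\ge\|\lfloor nz\rfloor\|_1$ gives $N\ge\|\lfloor nz\rfloor\|_1+\sum_i\|f(i)\|_1-d_1$; inserted in the $\cM$-bound with $S^\cD\ge 0$ this yields $\sum_i\|f(i)\|_1\le p-\eta n\epsilon^{1/d_1}\le p$. For (5), I refine the horizontal estimate per block and per coordinate: by (3), the $j_k$-direction contribution in block $k$ equals $h(k)_{j_k}-\sum_i g(k,i)_{j_k}+\sum_i|g(k,i)_{j_k}|$, which is $\ge h(k)_{j_k}+\sum_i|g(k,i)_{j_k}|$; and the elementary inequality $|a|+|b-a|\ge b+2(a)_-$ applied with $a=\sum_i g(k,i)_j$, $b=h(k)_j$, gives at least $h(k)_j+\sum_i(g(k,i)_j)_-$ for each $j\ne j_k$. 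Summing over blocks and directions, combining with the score bound so that the vertical contributions $V_{\mathrm{mono}}$ cancel, and using the identity $\sum_k\ell_k=p-q+1$, reproduces (5) after rearrangement.

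Inequality (4) is the main technical hurdle. Unlike the others it is a per-block statement and is \emph{not} a deterministic consequence of the nice-path structure: one can exhibit a single block containing an intermediate open site $b(i_{k+1}-1)$ that overshoots $b(i_{k+1})$ in some direction $j\ne j_k$ (so that the sum $\sum_i\sum_{j\ne j_k}g(k,i)_j$ is much larger than $\ell_k+\sum_{j\ne j_k}h(k)_j$), and such a block is compatible with the event $\cM$. The plan to bypass this is a filtering step: remove from the list $b(1),\dots,b(p)$ any intermediate open site whose inclusion would force (4) to fail in its block. Such a removal leaves the monotone sub-sequence $(b(i_k))_{1\le k\le q-1}$ unchanged (so $q$, the $h(k)$'s and $V_{\mathrm{mono}}$ are preserved), decreases $p$ and the offending $\ell_k$ by the same amount, and merges two consecutive vertical increments of the $f$-sequence (so $\sum_i\|f(i)\|_1$ can only decrease). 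Carefully tracking the effect of this filtering on the terms $\sum_i|g(k,i)_{j_k}|$ and $\sum_i(g(k,i)_j)_-$ appearing in (5), and checking that all five inequalities survive simultaneously, is the crux of the proof.
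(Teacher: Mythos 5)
Your overall framework---extracting the sequence of open sites visited by a nice path and verifying the five inequalities against the auxiliary data $(q, \ell_k, j_k, h(k), g(k,i), f(i))$---matches the paper's, and your arguments for Items~1, 2 and~3 are essentially sound. There are, however, two problems, one of which is fatal as written.

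For Item~5, your per-coordinate bound is too crude. You invoke $|a|+|b-a|\ge b+2(a)_-$ with $a=\sum_i g(k,i)_j$, but this only controls $\bigl|\sum_i g(k,i)_j\bigr|+\bigl|h(k)_j-\sum_i g(k,i)_j\bigr|$, whereas the $j$-th coordinate contribution to the length of $\pi_k$ is $\sum_i|g(k,i)_j|+\bigl|h(k)_j-\sum_i g(k,i)_j\bigr|$, which can be strictly larger. The difference matters: take $g(k,1)_j=2$, $g(k,2)_j=-1$, $h(k)_j=1$; your bound gives detour $\ge 0$, but $\sum_i(g(k,i)_j)_-=1$. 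One must keep the full $\sum_i|g(k,i)_j|$ and do the short case analysis on the sign of $h(k)_j-\sum_i g(k,i)_j$, which is what the paper's display \eqref{e:ajoutM} uses.

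The serious gap is Item~4, and here your diagnosis is only half right. Item~4 indeed does not follow deterministically from the nice-path structure for an \emph{arbitrary} nice path witnessing $\cM$; but the fix the paper uses is not filtering. It is to choose $\pi$ as an actual minimizer, $\tau_\epsilon(\pi)=T^{\text{nice}}_\epsilon(0,\lfloor nz\rfloor)$ (which exists since passage times are $\N$-valued). Global optimality then transfers to each block subpath $\pi_k$: replacing $\pi_k$ by a shortest-length path from $c(k,0)$ to $c(k+1,0)$ yields another nice path (because $c(k,0)\prec c(k+1,0)\prec\lfloor nz\rfloor$ for $k\le q-2$), so $\tau_\epsilon(\pi_k)\le\|c(k+1,0)-c(k,0)\|_1-\1_{c(k,0)\text{ open}}$; comparing with the lower bound on the length of $\pi_k$ that accounts for the overshoot $\sum_{j\ne j_k}\bigl(c(k,\ell_k)_j-c(k+1,0)_j\bigr)_+$ gives exactly Item~4. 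Your filtering strategy is not carried out, and it is not at all clear that it can be: removing an intermediate open site lowers $p$ by one while $\sum_i\|f(i)\|_1$ need not decrease (it stays put whenever the two merged vertical increments $f(i),f(i+1)$ point in the same directions), so Item~2 can be destroyed; and the terms in Item~5 change in competing directions since $\sum_k\ell_k$ drops while $\sum|g(k,i)_{j_k}|$ and $\sum(g(k,i)_j)_-$ also drop. Choosing $\pi$ optimal from the outset bypasses all of this, and without that choice the event $\cM$ alone does not give the lemma.
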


\begin{proof}
Let $\pi$ be a nice path from $0$  to $\lfloor nz \rfloor^-$ such that $\tau_\epsilon(\pi)=T_\epsilon^{\text{nice}}(0,\lfloor nz \rfloor)$.
Such a path exists because $0\prec\lfloor nz\rfloor ^-$ (so that there exists nice paths) and $\tau_\epsilon$ takes value in $\N$
(so that the infimum is a minimum).
As $\pi$ is optimal we have (consider a path with minimal length):
\begin{equation}\label{e:geodesique1}
\tau_\epsilon(\pi) \le \|\lfloor nz \rfloor^- \|_1 - \1_{0 \text{ is open}}.
\end{equation}
We can assume that $\pi$ is self-avoiding.
Say that a vertex visited by $\pi$ -- without considering the last vertex -- is special if it is open or if it is the site $0$.
Let $(b(0), \dots,b(p))$ be the sequence -- ordered by order of visit -- of special vertices successively visited by $\pi$.
Set $b(p+1)=\lfloor nz \rfloor$.
By definition of nice paths, 
\begin{equation}
b(i) \prec \lfloor nz \rfloor \text{ for all } i \in \{0,\dots, p\}
\end{equation}
and Equation \eqref{e:becausenice} is satisfied.
Let us now prove that, on $\cM(z,\epsilon,\eta,n)$, the five items of Lemma \ref{l:reduction-chernov} are satisfied for this sequence $(b(0),\ldots,b(p+1))$.

Start with Item 1.
For each $k \in \{0,\dots,q-1\}$, as $b(i_k)\prec b(i_{k+1})$, all the coordinates of $h(k) = \projh(b(i_{k+1})-b(i_k))$ are positive.
As moreover $\sum_{k=0}^{q-1} h(k) = \projh(b(i_q)-b(i_0))=\projh(\lfloor nz\rfloor)$, we get
\[
\sum_{k=0}^{q-1}\|h(k)\|_1 = \sum_{k=0}^{q-1} \sum_{j=1}^{d_1} h(k)_j = \sum_{j=1}^{d_1} \lfloor nz\rfloor_j \le n\|z\|_1
\]
as the coordinates of $z$ are non-negative. This establishes Item 1.

Let us check Item 2.
First note the following (crude) lower bound on $\tau_\epsilon(\pi)$:
\begin{equation}\label{e:plouf}
\tau_\epsilon(\pi) \ge \|\lfloor nz \rfloor ^-\|_1 + \sum_{i=1}^{p} \|f(i)\|_1 - p - \1_{0 \text{ is open}}.
\end{equation}
Indeed the minimal length of a path from $0$ to $\lfloor nz \rfloor^- $ is $\|\lfloor nz \rfloor ^-\|_1$.
Moreover, as $0$ and $\lfloor nz \rfloor$ both belong to $\Z^{d_1}\times \{0\}^{d_2}$, each step in the last $d_2$ coordinates is a detour.
Therefore the length of $\pi$ is at least $\|\lfloor nz \rfloor ^-\|_1 + \sum_{i=1}^{p} \|f(i)\|_1$.
As $\pi$ visits $p  + \1_{0 \text{ is open}}$ open sites, \eqref{e:plouf} follows.
From \eqref{e:geodesique1} and \eqref{e:plouf} one gets Item 2.

Item 3 is just a rewriting of the definition of $j_k$ given in \eqref{e:properties_detours_1}.
Indeed recall that $g(k,i)=\projh(c(k,i)-c(k,i-1))$ so 
$$\sum_{i=1}^{\ell_k} g(k,i)_{j_k}=c(k,\ell_k)_{j_k}-c(k,0)_{j_k}=b(i_{k+1}-1)_{j_k}-b(i_k)_{j_k}\le 0.$$

Using the definition of $h(k)$ and $g(k,i)$, let us note that Item 4 is equivalent to the following inequality:
\begin{equation}\label{e:properties_detours_2}
\sum_{1\le j \le d_1, j \neq j_k } c(k,\ell_k)_j \le \ell_k + \sum_{1\le j \le d_1, j \neq j_k } c(k+1,0)_j.
\end{equation} 
If $\ell_k=0$, \eqref{e:properties_detours_2} is a consequence of \eqref{e:hi}.
Henceforth, we assume $\ell_k \ge 1$.
If $k=q-1$, \eqref{e:properties_detours_2} is straightforward by definition of nice paths.
If $k \le q-2$, \eqref{e:properties_detours_2} is due to the optimality of $\pi$.
Consider indeed the subpath $\pi_k$ of $\pi$ from $c(k,0)$ to $c(k+1,0)$.
As $\pi$ is optimal, $\pi_k$ must be optimal. 
This implies that the rewards that $\pi_k$ collects must be bigger than the additional length induced by the detour it makes to collect them. 
In other words, 
considering a path of shortest length between $c(k,0)$ and $c(k+1,0)$ (which is part of a nice path as $c(k,0) \prec c(k+1,0) \prec c(q,0)$) we get 
\begin{equation}\label{e:unsens}
\tau_\epsilon(\pi_k) \le \|c(k+1,0)-c(k,0)\|_1 - \1_{c(k,0) \text{ open}}.
\end{equation}
Using a lower bound on the length of $\pi_k$ and recalling  $\pi_k$ visits $\ell_k+\1_{c(k,0) \text{ open}}$ open sites (without counting the last one), we get
\begin{align}
\tau_\epsilon(\pi_k) 
& \ge \|c(k+1,0)-c(k,0)\|_1 + \sum_{1\le j \le d_1, j \neq j_k } \big(c(k,\ell_k)_j-c(k+1,0)_j\big)_+ - \ell_k - \1_{c(k,0) \text{ open}} \nonumber \\
& \ge \|c(k+1,0)-c(k,0)\|_1 + \sum_{1\le j \le d_1, j \neq j_k } \big(c(k,\ell_k)_j-c(k+1,0)_j\big) - \ell_k - \1_{c(k,0) \text{ open}} \label{e:lautre},
\end{align}
where $(x)_+$ denotes the positive part of $x$, \emph{i.e} $\max(x,0)$. From \eqref{e:unsens} and \eqref{e:lautre} we get \eqref{e:properties_detours_2}.

We now prove Item 5.
On $\cM(z,\epsilon,\eta,n)$, we have the following additional property:
\begin{equation}\label{e:gaindetemps1}
\tau_\epsilon(\pi) <  \|\lfloor nz \rfloor^-\|_1  - S^\cD_\epsilon(\lfloor nz \rfloor) - \eta n \epsilon^{1/d_1}.
\end{equation}
The length of $\pi$ is $\|\lfloor nz \rfloor^-\|_1$ plus the length of the detours.
Let $k \in \{0,\dots,q-1\}$ and consider the contribution to detour of the path $\pi_k$, that is the subpath of $\pi$ between $c(k,0)$ and $c(k+1,0)$.
By \eqref{e:hi}, $c(k,0) \prec \lfloor nz\rfloor$ an thus $c(k,0) \preceq \lfloor nz \rfloor^-$.
By \eqref{e:properties_detours_1} the contribution to the detour of the $j_k$-th coordinate steps of $\pi_k$ is at least
\[
\sum_{i=1}^{\ell_k} |g(k,i)_{j_k}|.
\]
By \eqref{e:hi} $c(k,0) \prec c(k+1,0)$.
Therefore the contribution to the detour of the other horizontal coordinate steps of $\pi_k$ is at least
\[
\sum_{1 \le j \le d_1 ,j \neq j_k} \sum_{i=1}^{\ell_k} \big(g(k,i)_j\big)_-
\]
where $(x)_-$ denotes the negative part of $x$, \emph{i.e.} $\max(-x,0)$.
The contribution to the detour of the vertical steps is at least
\[
\sum_{k=0}^{q-1} \|\projv(c(k+1,0)-c(k,0))\|_1.
\]
Thus the length of $\pi$ is thus at least
\begin{equation}
\label{e:ajoutM}
\|\lfloor nz \rfloor^-\|_1
+
\sum_{k=0}^{q-1} \sum_{i=1}^{\ell_k} |g(k,i)_{j_k}| + \sum_{k=0}^{q-1} \sum_{1 \le j \le d_1 ,j \neq j_k} \sum_{i=1}^{\ell_k} \big(g(k,i)_j\big)_-
+\sum_{k=0}^{q-1} \|\projv(c(k+1,0)-c(k,0))\|_1.
\end{equation}
The number of open sites visited by $\pi$ is $p+\1_{0\text{ is open}}$
(we do not consider the last site, whatever its state is).
Moreover, $p=q-1+\sum_{k=0}^{q-1} \ell_k$.
Therefore
\begin{align}
\tau_\epsilon(\pi) & \ge 
\|\lfloor nz \rfloor^-\|_1
+
\sum_{k=0}^{q-1} \sum_{i=1}^{\ell_k} |g(k,i)_{j_k}| + \sum_{k=0}^{q-1} \sum_{1 \le j \le d_1 ,j \neq j_k} \sum_{i=1}^{\ell_k} \big(g(k,i)_j\big)_-
\nonumber \\ 
& +\sum_{k=0}^{q-1} \|\projv(c(k+1,0)-c(k,0))\|_1
- q +1 -\1_{0\text{ is open}} - \sum_{k=0}^{q-1} \ell_k. \label{e:lower_bound_time}
\end{align}
Let us first consider the case where $0$ is closed.
By \eqref{e:hi}, $(c(1,0), \dots, c(q-1,0))$ belongs to $\cD_\epsilon(0,\lfloor nz \rfloor)$ and therefore
\begin{align*}
\cS^\cD_\epsilon(\lfloor nz \rfloor) 
& \ge R(c(1,0), \dots, c(q-1,0)) - V(c(1,0), \dots, c(q-1,0)) \\
& \ge q-1 - \sum_{k=0}^{q-1} \|\projv(c(k+1,0)-c(k,0))\|_1. 
\end{align*}
When $0$ is open, we use $(c(0,0), \dots, c(q-1,0)) \in \cD_\epsilon(0,\lfloor nz \rfloor)$ and  get
\[
\cS^\cD_\epsilon(\lfloor nz \rfloor) 
\ge q - \sum_{k=0}^{q-1} \|\projv(c(k+1,0)-c(k,0))\|_1. 
\]
In all cases,
\begin{equation}\label{e:upper_time_score}
\cS^\cD_\epsilon(\lfloor nz \rfloor) 
 \ge q+\1_{0\text{ is open}}-1 - \sum_{k=0}^{q-1} \|\projv(c(k+1,0)-c(k,0))\|_1. 
\end{equation}
Item 5 follows from \eqref{e:gaindetemps1}, \eqref{e:lower_bound_time} and \eqref{e:upper_time_score}.
\end{proof}

Lemma \ref{l:reduction-chernov} prepares the application of union bounds and 
deterministic Chernov inequalities in the proof of Lemma \ref{l:sommable}.
As usual, the strategy is to keep enough information to avoid combinatorial explosion while throwing enough information to get manageable expressions.
Note for example that Item 2 already provides a good control on the length of the vertical increments. For that reason, we do not have to take all their contributions into account in Items 4 and 5, especially when giving a lower bound on the length of $\pi$ in \eqref{e:lautre} and \eqref{e:ajoutM} - which is good news since it would have add some pretty messy terms.
As another example, note that Items 3-5 give a good control on one coordinate of the $g(k,i)$ but a cruder one on the other coordinates.
This gives some relative freedom on $d_1-1$ coordinates of the $g(k,i)$.
Because of this, for example, there is an exploding factor of order $\epsilon^{-(d_1-1)/d_1}$ in \eqref{e:clete}.
This is however harmless because it is multiplied by $\epsilon$.

\begin{proof}[Proof of Lemma \ref{l:sommable}]
For all $s>0$ we introduce
\[
K(s) = \sum_{a \in \Z} \exp(- s|a|)=\frac{1+e^{-s}}{1-e^{-s}} \in [1,\infty).
\]
We will use in particular the following properties of $K$:
\begin{equation}\label{e:k:properties}
s \mapsto K(s) \text{ is decreasing}, K(s) \to 1 \text{ as } s \to \infty, K(s) \sim \frac 2 s \text{ as } s \to 0.
\end{equation}

Let $z,\epsilon,\eta$ and $n$ be as in Lemma \ref{l:reduction-chernov}.
Write $\cM(n)=\cM(z,\epsilon,\eta,n)$ for short.
We give an upper bound on $\P[\cM(n)]$ using Lemma \ref{l:reduction-chernov}, union bound and deterministic Chernov inequality 
(that is bound of an indicator by an exponential). On the event $\cM(n)$, Lemma \ref{l:reduction-chernov} gives us the existence of a $p\geq 0$ and of a sequence $(b(1), \dots , b(p))$, thus of the corresponding $q, (\ell_k)_k, (j_k)_k, (f(i))_i, (g(k,i))_{k,i}$ and $(h(k))_{k}$ satisfying Items 1 to 5.
By a union bound, we thus get a sum over $q, (\ell_k)_k, (j_k)_k, (f(i))_i, (g(k,i))_{k,i}$ and $(h(k))_{k}$ (as in Lemma \ref{l:reduction-chernov}) of
\begin{equation}\label{e:un_terme}
\epsilon^{q-1 + \sum \ell_k} \1_{\text{Item }1}\1_{\text{Item }2}\1_{\text{Item }4}\1_{\text{Item }5}
\end{equation}
where $\1_{\text{Item } 1}=1$ if and only if the condition described in Item 1 holds and so on. The term $\epsilon^{q-1 + \sum \ell_j}=\epsilon^p$ is simply the probability that a fixed sequence of sites $(b(1),\ldots,b(p))$ are open. Note that we bound $\1_{\text{Item } 3}$ by $1$ (we kept Item 3 for clarity in the statement of Lemma \ref{l:reduction-chernov} but it is useless here). 
We now fix $\alpha, \beta, \gamma >0$ such that
\begin{equation} \label{e:definition_constantes}
\beta=2\gamma, \quad eK(1)^{d_2} 2^{d_1}\beta^{-1}\gamma^{-(d_1-1)}= \frac 1 {8d_1} \text{ and } \alpha\eta-\beta \|z\|_1=1.
\end{equation}
The constants $\alpha,\beta,\gamma$ depends only on $z$ and $\eta$ (recall $d_1=d_1(z)$ and $d_2=d_2(z)$).
There exists $\epsilon_0=\epsilon_0(z,\eta)>0$ such that, for all $\epsilon<\epsilon_0$,
\begin{equation} \label{e:epsilon_condition}
 \gamma \epsilon^{1/d_1}<\alpha
\end{equation}
holds. We make this assumption in the remaining of the proof.
We use
\begin{align*}
\1_{\text{Item } 1} & \le M_1 := \exp[\beta \epsilon^{1/d_1} n\|z\|_1]\prod_{k=0}^{q-1}\exp[ -\beta  \epsilon^{1/d_1}\|h(k)\|_1], \\
\1_{\text{Item } 2} & \le M_2 := \prod_{i=1}^{q-1+\sum \ell_k} \exp[1-\|f(i)\|_1], \\
\1_{\text{Item } 4} & \le M_4 := \prod_{k=0}^{q-1} 
\exp\Big[
\gamma\epsilon^{1/d_1}\Big(\ell_k +  \sum_{1\le j \le d_1, j \neq j_k } h(k)_j-\sum_{i=1}^{\ell_k} \sum_{1\le j \le d_1, j \neq j_k }g(k,i)_j\Big)
\Big], \\
\1_{\text{Item } 5} &  \le M_5 :=
\exp[-\alpha \eta n \epsilon^{1/d_1}]
\prod_{k=0}^{q-1} 
\exp\Big[
- \alpha \sum_{i=1}^{\ell_k} |g(k,i)_{j_k}| 
-  \alpha \sum_{1 \le j \le d_1 ,j \neq j_k} \sum_{i=1}^{\ell_k} \big(g(k,i)_j\big)_-
+  \alpha \ell_k
\Big].
\end{align*}

Fix $q \ge 1, \ell_0, \dots, \ell_{q-1} \ge 0$ and $j_0,\dots,j_{q-1} \in \{1,\dots,d_1\}$.
We first focus on the sum $W(q,\ell_\cdot,j_\cdot)$ of \eqref{e:un_terme} over all the other variables:
\begin{equation}\label{e:W_somme_partielle}
W(q,\ell_\cdot,j_\cdot) = \sum_{\text{the } f(i), g(k,i) \text{ and }h(k)} \eqref{e:un_terme}.
\end{equation}
For all $k \in \{0, \dots, q-1\}$,  denote by $h'(k) \in \N$ the $j_k$-th coordinate of $h(k)$ and by $h''(k) \in \N^{d_1-1}$ the other coordinates.
In other words,
\[
h'(k)=h(k)_{j_k} \in \N \text{ and } h''(k) = (h(k)_j)_{1 \le j \le d_1, j \neq j_1} \in \N^{d_1-1}.
\]
Define similarly the $g'(k,i) \in \Z$ and the $g''(k,i) \in \Z^{d_1-1}$.
We can see the sum $W(q,\ell_\cdot,j_\cdot)$ as a sum over the new variables:
\begin{equation}\label{e:W_somme_partielle_prime}
W(q,\ell_\cdot,j_\cdot) = \sum_{\text{the } f(i), g'(k,i), g''(k,i), h'(k) \text{ and }h''(k)} \eqref{e:un_terme}.
\end{equation}
We can also see \eqref{e:un_terme} as a product of a constant, 
a factor depending on the $f(i)$, a factor depending on the $g'(k,i)$ and so on.
We will consider separately the sum of each kind of factor over the variables on which it depends.

Let start with the factor depending on the $f(i)$ (for simplicity we consider the whole $M_2$).
For short we write $p=q-1+\sum_k \ell_k$.
The sum is over $f(1), \dots, f(p) \in \Z^{d_2}$.
We get
\begin{align*}
\sum_{\text{the }f(i) \in \Z^{d_2}} \prod_{i=1}^p \exp[1-\|f(i)\|_1] 
& = \Big(\sum_{u \in \Z^{d_2}}  \exp[1-\|u\|_1]\Big)^p \\
& = \left(eK(1)^{d_2}\right)^p\\
& \le  \left(eK(1)^{d_2}\right)^{q+\sum_k \ell_k}.
\end{align*}
We now deal with the factor depending on the $g'(k,i)$:
\begin{align*}
\sum_{\text{the }g'(k,i) \in \Z} \prod_{k=0}^{q-1} \exp\Big[- \alpha \sum_{i=1}^{\ell_k} |g'(k,i)| \Big] = K(\alpha)^{\sum_k \ell_k}.
\end{align*}
We go on with the next factors. We get
\begin{align*}
& \sum_{\text{the }g''(k,i) \in \Z^{d_1-1}} 
\prod_{k=0}^{q-1} 
\exp\Big[ - \gamma\epsilon^{1/d_1}\sum_{i,j} g''(k,i)_j - \alpha \sum_{i,j}  \big(g''(k,i)_j\big)_-\Big] \\
 & \quad =  \Big( \sum_{u \in \Z}  \exp[ - \gamma\epsilon^{1/d_1} u - \alpha u_-]\Big)^{(d_1-1)\sum_k \ell_k} \\
 & \quad\le  \big(K(\gamma\epsilon^{1/d_1})+K(\alpha-\gamma\epsilon^{1/d_1})\big)^{(d_1-1)\sum_k \ell_k}
\end{align*}
where we used \eqref{e:epsilon_condition} in the last step.
Concerning the $h'(k)$ we have
\begin{align*}
\sum_{\text{the }h'(k) \in \Z} 
\prod_{k=0}^{q-1}\exp[ -\beta  \epsilon^{1/d_1}|h'(k)|]=K(\beta \epsilon^{1/d_1})^q.
\end{align*}
Using $\beta=2\gamma$ (see \eqref{e:definition_constantes}) we obtain
\begin{align*}
& \sum_{\text{the }h''(k) \in \Z^{d_1-1}} 
\prod_{k=0}^{q-1}\exp\Big[ -\beta  \epsilon^{1/d_1}\|h''(k)\|_1 + \gamma \epsilon^{1/d_1} \sum_j h''(k)_j\Big]  \\
& \quad = 
\sum_{\text{the }h''(k) \in \Z^{d_1-1}} 
\prod_{k=0}^{q-1}\exp\Big[ -2\gamma  \epsilon^{1/d_1}\|h''(k)\|_1 + \gamma \epsilon^{1/d_1} \sum_j h''(k)_j\Big]\\
& \quad =  
\sum_{\text{the }h''(k) \in \Z^{d_1-1}} 
\prod_{k=0}^{q-1}\exp\Big[ -\gamma  \epsilon^{1/d_1}\|h''(k)\|_1 \Big] \\
& \quad = K(\gamma  \epsilon^{1/d_1})^{q(d_1-1)}.
\end{align*}
Coming back to \eqref{e:W_somme_partielle_prime}, 
using the above upper bounds 
and rearranging the factors (in particular we distribute the power of $\epsilon$ in the different factors),
 we get:
\begin{align}\label{e:borneW}
W(q,\ell_\cdot,j_\cdot) =
& \frac 1 \epsilon \exp\Big[\beta \epsilon^{1/d_1} n\|z\|_1  -\alpha \eta n \epsilon^{1/d_1}\Big] \\ \nonumber
& \left(\epsilon eK(1)^{d_2}K(\beta \epsilon^{1/d_1})K(\gamma  \epsilon^{1/d_1})^{(d_1-1)}\right)^{q} \\ \nonumber
& \left(\epsilon \exp[\gamma \epsilon^{1/d_1} + \alpha ]
eK(1)^{d_2}K(\alpha)\big(K(\gamma\epsilon^{1/d_1})+K(\alpha-\gamma\epsilon^{1/d_1})\big)^{d_1-1}\right)^{\sum_k \ell_k}. 
\end{align}
Using \eqref{e:k:properties} we get, as $\epsilon \to 0 $,
\[
K(\gamma\epsilon^{1/d_1})+K(\alpha-\gamma\epsilon^{1/d_1}) \sim 2\gamma^{-1}\epsilon^{-1/d_1}
\]
and thus, for some constant $C=C(\alpha,\beta,\gamma,d_1,d_2)$,
\begin{equation}\label{e:clete}
\epsilon \exp[\gamma \epsilon^{1/d_1} + \alpha ]
eK(1)^{d_2}K(\alpha)\big(K(\gamma\epsilon^{1/d_1})+K(\alpha-\gamma\epsilon^{1/d_1})\big)^{d_1-1}
\sim C \epsilon^{1/d_1}
\end{equation}
as $\epsilon$ tends to 0.
In particular there exists $\epsilon_1=\epsilon_1(z,\eta)$ (recall that $\alpha, \beta, \gamma$ only depends on $z$ and $\eta$
and that $d_1=d_1(z)$, $d_2=d_2(z)$)
such that for all $\epsilon<\epsilon_1$, the above quantity is at most $1/2$.
Using \eqref{e:k:properties} we also get, as $\epsilon \to 0$,
\[
\epsilon eK(1)^{d_2}K(\beta \epsilon^{1/d_1})K(\gamma  \epsilon^{1/d_1})^{(d_1-1)} \to eK(1)^{d_2} 2^{d_1}\beta^{-1}\gamma^{-(d_1-1)}.
\]
By \eqref{e:definition_constantes}, this limit is equal to $1/(8d_1)$. 
Therefore there exists $\epsilon_2=\epsilon_2(z,\eta)$ 
such that for all $\epsilon<\epsilon_2$, the above quantity is at most $1/(4d_1)$.
Set $\epsilon_3=\min(\epsilon_0,\epsilon_1,\epsilon_2)$, which depends only on $z$ and $\eta$.
For all $\epsilon<\epsilon_3$ we thus have
\[
W(q,\ell_\cdot,j_\cdot) \le 
 \frac 1 \epsilon \exp\Big[\beta \epsilon^{1/d_1} n\|z\|_1  -\alpha \eta n \epsilon^{1/d_1}\Big] \left( \frac 1 {4d_1}  \right)^q
 \left(\frac 1 2 \right)^{\sum_k \ell_k}.
\]
Therefore (note that the above expression does not depends on the $j_k$)
\[
\sum_{\text{the }j_k} W(q,\ell_\cdot,j_\cdot) \le 
 \frac 1 \epsilon \exp\Big[\beta \epsilon^{1/d_1} n\|z\|_1  -\alpha \eta n \epsilon^{1/d_1}\Big] \left( \frac 1 4  \right)^q
 \left(\frac 1 2 \right)^{\sum_k \ell_k}
\]
and then 
\[
\sum_{\text{the }\ell_k} \sum_{\text{the }j_k} W(q,\ell_\cdot,j_\cdot) \le 
 \frac 1 \epsilon \exp\Big[\beta \epsilon^{1/d_1} n\|z\|_1  -\alpha \eta n \epsilon^{1/d_1}\Big] \left( \frac 1 2  \right)^q
\]
and therefore
\[
\P[\cM(n)] \le \sum_{q \ge 1} \sum_{\text{the }\ell_k} \sum_{\text{the }j_k }W(q,\ell_\cdot,j_\cdot)
 \le \frac 1 \epsilon \exp\Big[\beta \epsilon^{1/d_1} n\|z\|_1  -\alpha \eta n \epsilon^{1/d_1}\Big].
\]
By \eqref{e:definition_constantes} we thus have, for all $\epsilon<\epsilon_3$,
\[
\P[\cM(n)]\le \frac 1 \epsilon  \exp\Big[- \epsilon^{1/d_1} n\Big].
\]
This proves the lemma. 
\end{proof}

\section{Proof of Theorem \ref{t2}: the bond case}
\label{s:bond}

The proof in the case of bond percolation uses the same strategy as before. First, we study an oriented model which is equivalent in the limit to a semi-continuous model. Then, we prove that, at first order, the oriented and non-oriented model are equal. 
In this section, we just highlight the differences compared to the case of the site percolation.
As in the introduction, we use bold letters for objects in the framework of Bernoulli bond first-passage percolation.

\subsection{A related discrete oriented model}\label{a:discret_model}

Recall the relation introduced in Section \ref{s:discrete_model} between the sites of $\Z^d$ :
for $x, y \in \Z^d$, we write  
\[
x \prec y \text{ if for all }  i \in \{1,\dots,d_1\}, \; x_i < y_i.
\]
We extend this relation for edges of $\Z^d$ in the following way. Denote by $(e_1,\ldots,e_d)$ the canonical basis of $\Z^d$. Then  for $u:=(x,x+e_i)$ and $v:=(y,y+e_j)$ two edges of $\Z^d$, we write
$$u\prec v \text{ if  }   \; x \prec y.$$
We will also use this relation to compare a site $x$ to an edge $v:=(y,y+e_j)$ with the convention $x\prec v$ (resp.~ $x \preceq v$, $v\prec x$) if $x\prec y$ (resp.~$x \preceq y$, $y\prec x$).
We now define the set of monotone sequences of open edges between sites $x\preceq y$ in $\Z^{d_1}\times \{0\}^{d_2}$ by 
\begin{align*}
& \bcD_\epsilon(x,y) = \\
& \quad \{ (\bw(1),\dots,\bw(k)) : k \ge 0, \bw(1),\dots, \bw(k)  \text{ are open edges of } \Z^d 
\text{ such that } x \preceq \bw(1) \prec \cdots \prec \bw(k) \prec y\}.
\end{align*}
Notice that the notation $\bw(i)$ refers to an edge, whereas the notation $w(i)$ we used previously referred to a point in $\mathbb{Z}^d$. 
Let $\alpha(i),\beta(i)$ be the two extremities of the edge $\bw(i)$ and define $A(i)$ as
\begin{itemize}
\item The edge $\{\projv(\alpha(i)), \projv(\beta(i)) \}$ if  $\projv(\alpha(i))\neq\projv(\beta(i))$, \emph{i.e.}, if $\bw(i)$ is a \emph{vertical} edge.
\item The point $\projv(\alpha(i) ) $ otherwise, \emph{i.e.}, if $\bw(i)$ is an \emph{horizontal} edge.
\end{itemize}
We denote $\bV(\bw(1),\dots,\bw(k))$ the $\ell_1$-length of the shortest path in $\Z^{d_2}$ from  $0$ to $0$ that goes  successively through  the (unoriented) edges and sites $A(1), . . . , A(k)$ in this order. Note that, since $(\bw(1),\dots,\bw(k))\in  \bcD_\epsilon(x,y)$,   $\|y-x\|_1+ \bV(\bw(1),\dots,\bw(k))$ is  in fact equal to the $\ell_1$-length of the   shortest path from $x$ to $y$  going through the edges $\bw(1),\dots,\bw(k)$ and $\bV(\bw(1),\dots,\bw(k))$ is the total vertical displacement of this path.

Let $\bR(\bw(1),\dots,\bw(k)):=k$ be the number of rewards collected along this path.
As before, we define the score $\bS^\cD_\epsilon(x,y)$  between $x \preceq y$ in $\Z^{d_1} \times \{0\}^{d_2}$ by 
\begin{equation*}
\bS^\cD_\epsilon(x,y) =  \sup_{s \in \bcD_\epsilon(x,y)} \big(\bR(s) - \bV(s)\big).
\end{equation*}
An example is given in Figure \ref{Fig:discret_arete}.
Note that the analogs of Lemma \ref{l:elementary} and Lemma \ref{l:sigma_discrete} clearly also hold in this setting
and we can define a mean directional score $\bsigma^\cD_\epsilon(\cdot)$ for all $z \in (0,+\infty)^{d_1} \times \{0\}^{d_2}$ by
\begin{equation*}
\bsigma^\cD_\epsilon(z) : = \lim_{n \to \infty} \frac 1 n \E[\bS^\cD_\epsilon(0,\lfloor nz \rfloor )].
\end{equation*}

\begin{figure}
\begin{center}
\includegraphics[width=10cm]{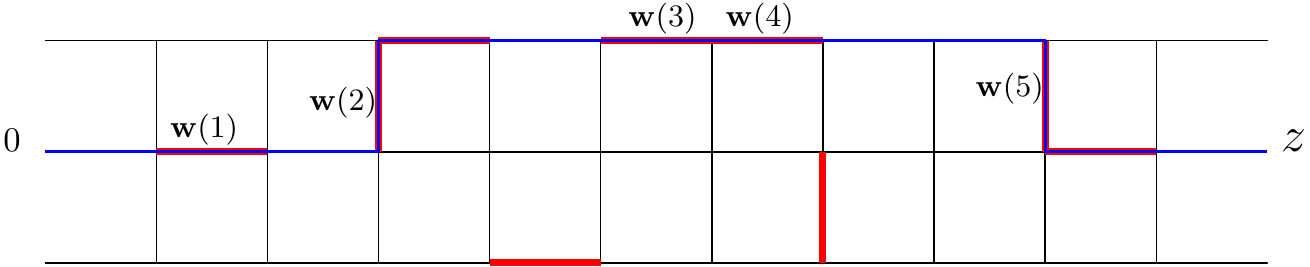}
\end{center}
\caption{Example in the discrete oriented model of bond percolation in dimensions $(d_1,d_2) = (1,1)$. Open edges are drawn in red. Here, the score $\bS^\cD_\epsilon(0,z)$ is achieved using the path drawn in blue. Note that two open edges $\bw(i),\bw(i+1)$ taken by the blue path are counted as two rewards for this path  only if $\bw(i)\prec \bw(i+1)$. So here, we have $\bS^\cD_\epsilon(0,z)=5-2=3$. } 
\label{Fig:discret_arete}
\end{figure}

\subsection{A related semi-continuous oriented model}
\label{a:continuous_model}

Let $(\xi_1,\ldots,\xi_d)$ be $d$ independent Poisson point processes on $\R^{d_1} \times \Z^{d_2}$ with intensity $\nu=\d x \otimes \d v$, where $\d x$ denotes the Lebesgue measure on $\R^{d_1}$ and $\d v $ the counting measure on $ \Z^{d_2}$.
We call {\em particles of type $j$}   the points  of $\xi_j$.
Let $\epsilon>0$.
For $x=(a,b) \in \Z^{d_1+d_2}$, we define the $\epsilon-$cube $C_\epsilon(x)= C_\epsilon (a,b)$ as in \eqref{eq:def_Cube}. Let $\tilde\epsilon:=1-\exp(-\varepsilon)$ and for an edge $u:=(x,x+e_j)\in \mathbb{E}^d$,  set
 \[
\btau_{\tilde\epsilon}(u) = \1_{\{C_\epsilon(x)\cap \xi_j=\emptyset\}}.
\]
Hence, the edge $(x,x+e_j)$ is open if and only if there is a particle of type $j$ in the $\varepsilon$-cube $C_\varepsilon(x)$.
The family $(\btau_{\tilde\epsilon}(u))_{u \in \mathbb{E}^d }$ is again a family of independent Bernoulli random variables with parameter $1-\tilde\epsilon$.
We use this family to define a bond percolation on $\Z^d$ with  parameter $\tilde\epsilon$.

Let us now explain how we construct the semi-continuous oriented model on $\R^{d_1} \times \Z^{d_2}$ and define the score between two sites. Let $\Xi:=\cup_{i=1}^d \xi_i$ be the union of the $d$ Poisson point processes. Recall the definition \eqref{e:ajoutM3} of the relation $\prec_{\alpha}$ introduced in Section \ref{s:continuous_model} and consider as before, for any $z \in (0,+\infty)^{d_1}\times \{0\}^{d_2}$, the set
$\bcC_\alpha(z)$  of all monotone sequences of particles of $\Xi$ between $0$ and $z$ which are $\alpha$-separated, which we define as the set
\[
\bcC_\alpha(z)=
\{ (w(1),\dots,w(k)) : k \ge 0, w(1),\dots, w(k)  \in \Xi
\text{ such that } 0 \preceq w(1) \prec_\alpha \cdots \prec_\alpha w(k) \prec_\alpha z\}.
\]
 
Let us now emphasize here a difference compared to the case of site percolation for the definition of the score. Particles of $\xi_j$ will have a distinct role  whether $j\le d_1$ or $j>d_1$. Indeed
\begin{itemize}
\item For $j\le d_1$, a particle $w=(x,\nu)\in \R^{d_1} \times \Z^{d_2}$  of type $j$  creates a reward at position $(x,\nu)$.
\item For $j>d_1$, a particle $w=(x,\nu)\in \R^{d_1} \times \Z^{d_2}$ of type $j$ creates an open \emph{vertical} edge from $(x,\nu)$ to $(x,\nu)+e_j$.
\end{itemize}
For a sequence $(w(1),\dots,w(k))\in \bcC_\alpha(z)$ of points of $\Xi$, let define $A(i)$  for $1\le i \le k$ as
\begin{itemize}
\item The edge $\{\projv(w(i)), \projv(w(i)+e_j) \}$ if  $w(i)$ is a particle of type $j>d_1$, \emph{i.e.}, if $w(i)$ is associated to a \emph{vertical} edge.
\item The point $\projv(w(i) ) $ otherwise, \emph{i.e.}, if $w(i)$ is associated to a reward.
\end{itemize}
As before, we denote $\bV(w(1),\dots,w(k))$ the $\ell_1$-length of the shortest path in $\Z^{d_2}$ from $0$ to $0$ that goes  successively through  the (unoriented) edges and sites $A(1), . . . , A(k)$ in this order. Note that we commit here a slight abuse of notation since $\bV(w(1),\dots,w(k))$ does not depend only on $(w(1),\dots,w(k))$ but also on the types of particles associated to each term of the sequence $(w(1),\dots,w(k))$. 
%
%
Let us remark that going through an edge is more constraining than going through a site, and so the path in $\Z^{d_2}$ associated here to a sequence  $(w(1),\ldots,w(k))$ of points of $\Xi$ has an $\ell_1$-length greater than or equal to the one defined for the same sequence of points in Section \ref{s:continuous_model}, \emph{i.e.},
 \begin{equation}\label{e:inegaliteV}
 \bV(w(1),\ldots,w(k))\ge V(w(1),\ldots,w(k)).
 \end{equation}
 Finally, let $\bR(w(1),\ldots,w(k)):=k$ as before.

Then,  for all $z \in (0,+\infty)^{d_1} \times \{0\}^{d_2}$ and any $\alpha \ge 0$,  we  define the score $\bS^{\cC}_\alpha(z)$  by
\[
\bS^{\cC}_\alpha(z)= \sup_{s \in \bcC_\alpha(z)}\big(\bR(s)-\bV(s))
\]
(see an example in Figure \ref{Fig:ppp_arete}) and the  mean directional score $\bsigma^\cC_\alpha$ as in Lemma \ref{l:sigmac} by
\[
\bsigma^\cC_\alpha(z) 
:= \lim_{\lambda \to \infty} \frac1 \lambda \E\big[\bS^{\cC}_\alpha(\lambda z)\big] 
= \sup_{\lambda >0} \frac1 \lambda \E\big[\bS^{\cC}_\alpha(\lambda z)\big].
\]

\begin{figure}
\begin{center}
\includegraphics[width=10cm]{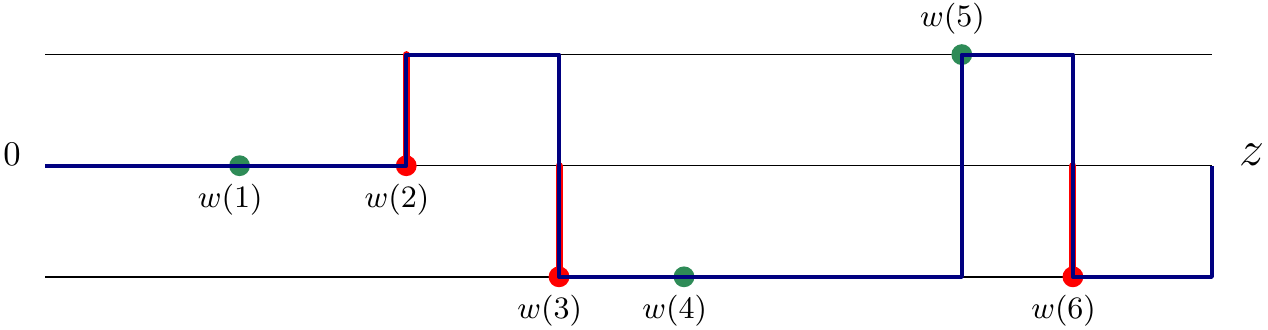}
\end{center}
\caption{Example of a path in the semi-continuous model in dimensions $(d_1,d_2) = (1,1)$. Particles of type $1$ are drawn in green whereas particles of type $2$ are drawn in red with their associated edge. In blue, a path with minimal length going through $s=(w(1),\ldots,w(6))$. Here $\bR(s)-\bV(s)=6-8=-2.$} 
\label{Fig:ppp_arete}
\end{figure}

All the arguments explained in Section \ref{s:link_discret_continu} still hold in this setting and so we also get 
\begin{lemma}\label{l:link_discrete_continous_arete} For all $z \in (0,+\infty)^{d_1} \times \{0\}^{d_2}$,
\[
\lim_{\tilde\epsilon \to 0} \frac{\bsigma^\cD_{\tilde\epsilon} (z)}{\tilde\epsilon^{1/{d_1}}} = \bsigma_0^{\cC}(z).
\]
\end{lemma}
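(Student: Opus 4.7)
The plan is to follow the same three-step strategy as in the proof of Lemma \ref{l:link_discrete_continous}, adapting each intermediate lemma to the bond setting. The multi-type Poisson construction introduced in Section \ref{a:continuous_model} is tailored precisely to make the coupling work edge-by-edge, so once the bookkeeping is carefully written down, essentially everything goes through verbatim.

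First I would establish the bond analog of Lemma \ref{l:sigmac2}: the maps $\alpha \mapsto \E[\bS^\cC_\alpha(z)]$ and $\alpha \mapsto \bsigma^\cC_\alpha(z)$ are non-increasing, and
\[
\lim_{\alpha \to 0} \E[\bS^\cC_\alpha(z)] = \E[\bS^\cC_0(z)], \qquad \lim_{\alpha \to 0} \bsigma^\cC_\alpha(z) = \bsigma^\cC_0(z).
\]
This follows word for word the proof of Lemma \ref{l:sigmac2}: monotonicity comes from the inclusion $\bcC_\alpha(z)\subseteq\bcC_{\alpha'}(z)$ for $\alpha\ge\alpha'$, the pointwise limit on the event level then propagates by monotone convergence, and the limit of $\bsigma^\cC_\alpha$ is obtained by swapping two suprema as before.

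Second, I would prove the bond analog of Lemma \ref{l:coupling1}, namely
\[
\E[\bS^\cC_\epsilon(\epsilon^{1/d_1} z)] \;\le\; \E[\bS^\cD_{\tilde\epsilon}(\lfloor z \rfloor)] \;\le\; \E[\bS^\cC_0(\epsilon^{1/d_1} z)],
\]
using the natural coupling based on the $\epsilon$-cubes. For the left inequality, given $s=(w(1),\ldots,w(k))\in\bcC_\epsilon(\epsilon^{1/d_1}z)$, each $w(i)$ is a particle of some type $j_i$ lying in a unique cube $C_\epsilon(x_i)$ with $x_i\in\Z^d$; define $\bw(i):=(x_i,x_i+e_{j_i})$, which is open in the discrete bond model with parameter $\tilde\epsilon$ by construction. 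The $\epsilon$-separation $w(i) \prec_\epsilon w(i+1)$ forces the horizontal coordinates of the $x_i$ to be strictly increasing, so $(\bw(1),\ldots,\bw(k))\in\bcD_{\tilde\epsilon}(0,\lfloor z\rfloor)$. For the right inequality, given a monotone sequence of open edges, pick in each corresponding cube a particle of the matching type (which exists precisely because the edge is declared open), and check that strict discrete monotony implies the resulting particles lie in $\bcC_0(\epsilon^{1/d_1}z)$.

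The key verification, and the one spot where the bond case is genuinely more delicate than the site case, is that $\bV$ and $\bR$ are preserved under the coupling. Here $\bR$ trivially matches since both sides equal $k$. For $\bV$ one checks, for each $i$, that the projection of $w(i)$ onto $\Z^{d_2}$ equals the projection of the starting endpoint of $\bw(i)$ (this is immediate from the definition \eqref{eq:def_Cube} of $C_\epsilon(a,b)$, which fixes the vertical coordinate to $b$), and that the type $j_i$ coincides with the direction of $\bw(i)$. Consequently the decorated objects $A(i)$ agree in both models, so the shortest path through the $A(i)$ in $\Z^{d_2}$ has the same $\ell_1$-length, giving $\bV(s)=\bV(s')$.

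Finally I would conclude as in the proof of Lemma \ref{l:link_discrete_continous}: apply the inequality above to $nz$ in place of $z$, divide by $n$ and let $n\to\infty$ to obtain
\[
\epsilon^{1/d_1}\,\bsigma^\cC_\epsilon(z) \;\le\; \bsigma^\cD_{\tilde\epsilon}(z) \;\le\; \epsilon^{1/d_1}\,\bsigma^\cC_0(z),
\]
use the continuity statement $\bsigma^\cC_\epsilon(z)\to\bsigma^\cC_0(z)$ as $\epsilon\to 0$ from the first step, and finally absorb the discrepancy between $\epsilon$ and $\tilde\epsilon=1-\exp(-\epsilon)\sim\epsilon$ into the $1/d_1$-th power. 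The main obstacle is thus not analytic but notational: the identification of $\bV$ across the two models rests on carefully tracking that a particle of type $j$ in the cube $C_\epsilon(x)$ corresponds exactly to an open edge at $x$ in direction $j$, including when $j>d_1$ forces the associated object in $\Z^{d_2}$ to be an edge rather than a site.
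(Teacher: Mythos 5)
Your proposal is correct and follows exactly the route the paper takes: the paper's proof of this lemma consists of the single remark that ``all the arguments explained in Section 2.3.2 still hold in this setting,'' and you have filled in precisely those arguments, including the one genuinely new verification that the decorated objects $A(i)$ (and hence $\bV$) are preserved under the cube coupling because $C_\epsilon(a,b)$ fixes the vertical coordinate and the particle type matches the edge direction. The adaptations of Lemmas \ref{l:sigmac2} and \ref{l:coupling1} and the concluding passage via $\tilde\epsilon\sim\epsilon$ are all as the paper intends.
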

Moreover, using scaling argument, we also get that for all $z \in (0,+\infty)^{d_1} \times \{0\}^{d_2}$, 
\begin{equation}
\label{e:ajoutM2}
\bsigma^\cC_0(z) = \gamma(z) \bsigma^\cC_0(\1_{d_1}).
\end{equation}
Besides, by a coupling argument, we get the following bound on $\bsigma^\cC_0(\1_{d_1})$:
\begin{lemma}\label{l:encadrement_arete}For all $d_1\ge 1$ and $d_2\ge 0$,
$$
d_1^{1/d_1}\sigma^\cC_0(\1_{d_1})\le \bsigma^\cC_0(\1_{d_1})\le  (d_1+d_2)^{1/d_1}\sigma^\cC_0(\1_{d_1}).$$
\end{lemma}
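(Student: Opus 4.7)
The plan is to prove both inequalities through two natural couplings with the site semi-continuous model, followed by a Poisson-intensity scaling argument.

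For the \emph{upper bound}, let $\Xi=\bigcup_{j=1}^{d}\xi_j$, which by superposition is a Poisson point process on $\R^{d_1}\times\Z^{d_2}$ of intensity $(d_1+d_2)\cdot(\d x\otimes\d v)$. Any sequence $s=(w(1),\dots,w(k))\in \bcC_0(z)$ is also an admissible sequence in the site model $\cC_0(z)$ when $\Xi$ is used as particle set, since both definitions rely on the same relation $\prec$ extended to $\R^{d_1}\times\Z^{d_2}$. By \eqref{e:inegaliteV}, $\bV(s)\ge V(s)$, while $\bR(s)=R(s)=k$, hence $\bR(s)-\bV(s)\le R(s)-V(s)$. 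Taking the supremum yields
\[
\bS^\cC_0(z)\ \le\ S^\cC_0(z;\,\Xi),
\]
where $S^\cC_0(\cdot\,;\,\Xi)$ denotes the site score computed with $\Xi$. For the \emph{lower bound}, let $\Xi^h=\bigcup_{j=1}^{d_1}\xi_j$, a Poisson point process of intensity $d_1\cdot(\d x\otimes\d v)$. Restricting the supremum defining $\bS^\cC_0(z)$ to sequences contained in $\Xi^h$, each $w(i)$ is of a reward type, so $A(i)=\projv(w(i))$ and hence $\bV(s)=V(s)$. This gives
\[
\bS^\cC_0(z)\ \ge\ S^\cC_0(z;\,\Xi^h).
\]

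A scaling argument then converts Poisson intensity into a spatial rescaling. For $\kappa>0$, the map $\phi_\kappa\colon(x,v)\mapsto(\kappa^{1/d_1}x,v)$ sends a Poisson point process of intensity $\kappa\cdot(\d x\otimes\d v)$ to one of unit intensity; it preserves $\prec$ (which only tests strict horizontal inequalities), the reward count $R$, and the vertical displacement $V$ (which depends only on vertical coordinates). Since $\phi_\kappa(\lambda\1_{d_1})=\kappa^{1/d_1}\lambda\1_{d_1}$, this gives
\[
S^\cC_0(\lambda\1_{d_1};\,\kappa)\ \overset{d}{=}\ S^\cC_0\bigl(\kappa^{1/d_1}\lambda\1_{d_1}\bigr).
\]
Dividing by $\lambda$ and letting $\lambda\to\infty$, using Lemma \ref{l:sigmac}, yields the mean directional score $\kappa^{1/d_1}\sigma^\cC_0(\1_{d_1})$ in the site model of intensity $\kappa$.

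Applying this identity with $\kappa=d_1+d_2$ in the upper coupling and $\kappa=d_1$ in the lower coupling, taken at $z=\1_{d_1}$, gives
\[
d_1^{1/d_1}\sigma^\cC_0(\1_{d_1})\ \le\ \bsigma^\cC_0(\1_{d_1})\ \le\ (d_1+d_2)^{1/d_1}\sigma^\cC_0(\1_{d_1}).
\]
I do not expect a genuine obstacle: the argument is a superposition/thinning computation combined with a Poisson scaling. The only point requiring some care is the asymmetric treatment of particle types in the bond model, which is what makes $\bV=V$ on sequences drawn from $\Xi^h$ (so the lower coupling is tight) while only $\bV\ge V$ holds in general (as used in the upper coupling via \eqref{e:inegaliteV}).
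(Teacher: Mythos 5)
Your proof is correct and follows essentially the same route as the paper: both inequalities are obtained from the same two couplings (the full superposition $\Xi=\bigcup_{j\le d}\xi_j$ for the upper bound via \eqref{e:inegaliteV}, and the horizontal-type superposition $\bigcup_{j\le d_1}\xi_j$ for the lower bound), and the same Poisson-intensity scaling $\kappa\mapsto\kappa^{1/d_1}$. You merely make the scaling map $\phi_\kappa$ and the verification $\bV=V$ on horizontal-type sequences a bit more explicit than the paper does.
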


\begin{rem}
Recall that $\sigma^\cC_0(\1_{d_1})$ (resp. $\bsigma^\cC_0(\1_{d_1})$) is the directional score in direction $\1_{d_1}$ in the semi-continuous model of dimension $d_1+d_2$ associated with site (resp. bond) first-passage percolation. So, $\sigma^\cC_0(\1_{d_1})$ and $\bsigma^\cC_0(\1_{d_1})$ also depend on $d_2$ although it is not recalled in our notation.
\end{rem}

\begin{proof} Recall that, in the case of the site percolation, the semi-continuous model (and so the directional score   $\sigma^\cC_0(\1_{d_1})$) is constructed from a Poisson point process  on $\R^{d_1} \times \Z^{d_2}$ with intensity $\nu=\d x \otimes \d v$. By a scaling argument, if we instead consider a Poisson point process on $\R^{d_1} \times \Z^{d_2}$ with intensity $\lambda \nu$ to construct the semi-continuous model, we easily see that the directional score in direction $\1_{d_1}$ will be equal then to $\lambda^{1/d_1}\sigma^\cC_0(\1_{d_1})$.
Besides, recall that the semi-continuous model for bond percolation is constructed from a collection of $d=d_1+d_2$ Poisson point processes $(\xi_1,\ldots,\xi_d)$. 

Then, on one hand, we use that, in the definition of the score of bond percolation, for $j\le d_1$, the points of $\xi_j$ corresponds to an open site   on $\R^{d_1}\times \Z^{d_2}$. Hence, noticing that $\Xi_{d_1}:=\cup_{j\le d_1} \xi_j$ is just a Poisson point process   on $\R^{d_1} \times \Z^{d_2}$ with intensity $d_1\nu$ and that adding vertical open edges can only increase the score, we deduce that 
$$\bsigma^\cC_0(\1_{d_1})\ge d_1^{1/d_1}\sigma^\cC_0(\1_{d_1}).$$

And on the other hand, a path going through an edge $(x,x+e_j)$ necessarily goes through the site $x$ 
and so the scores in the semi-continuous model of  bond percolation are necessarily smaller than or equal to the scores in the semi-continuous model of site percolation constructed from the Poisson point process $\Xi:=\cup_{j\le d} \xi_j$ which have intensity  $(d_1+d_2)\nu$ (\emph{c.f.} Eq. \eqref{e:inegaliteV}).
Thus, we get 
$$\bsigma^\cC_0(\1_{d_1})\le (d_1+d_2)^{1/d_1}\sigma^\cC_0(\1_{d_1}).$$
\end{proof}

\subsection{Link between the oriented model and the original model}

\label{a:link_oriented_original}

As in the case of site percolation, it remains to prove that, for $\varepsilon$ small enough, the discrete oriented model  and the non-oriented model have the same behavior, more precisely that we have the analogue of Proposition \ref{l:reduction_oriented}:

\begin{lemma} \label{a:reduction_oriented} 
For all $z\in(0,+\infty)^{d_1}\times\{0\}^{d_2}$,
\[
\lim_{\epsilon \to 0} \frac{\bmu_\epsilon(z)-(\|z\|_1-\bsigma_\epsilon^\cD(z))}{\epsilon^{1/d_1}} = 0.
\]
\end{lemma}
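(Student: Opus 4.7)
The plan is to mirror the proof of Proposition \ref{l:reduction_oriented}. The upper bound $\bmu_\epsilon(z) \le \|z\|_1 - \bsigma_\epsilon^\cD(z)$ already follows from the bond analog of Lemma \ref{l:elementary} noted in Section \ref{a:discret_model}, so I only need to prove the matching lower bound
\[
\bmu_\epsilon(z) \ge \|z\|_1 - \bsigma_\epsilon^\cD(z) - \eta\, \epsilon^{1/d_1}
\]
for every fixed $\eta>0$ and every $\epsilon$ smaller than some $\epsilon_0(z,\eta)$. To this end, I would first introduce the notion of nice path for bond percolation (a path $(a(0),\dots,a(n))$ in $\Z^d$ with $a(0)=0$, $a(n)=\lfloor nz\rfloor^-$ and $a(i)\prec\lfloor nz\rfloor$ for every $i$) and the associated minimal travel time $\bT^{\text{nice}}_\epsilon$, and check that the proof of Lemma \ref{l:path-nice} carries over verbatim to give $\bmu_\epsilon(z) = \lim_n \tfrac1n \E[\bT^{\text{nice}}_\epsilon(0,\lfloor nz\rfloor)]$. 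The whole problem then reduces to proving that $\P[\bcM(z,\epsilon,\eta,n)] \to 0$ as $n\to\infty$, where
\[
\bcM(z,\epsilon,\eta,n) = \big\{\bT^{\text{nice}}_\epsilon(0,\lfloor nz\rfloor) < \|\lfloor nz\rfloor^-\|_1 - \bS^\cD_\epsilon(\lfloor nz\rfloor) - \eta n\epsilon^{1/d_1}\big\}.
\]

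To bound $\P[\bcM(z,\epsilon,\eta,n)]$ I would transpose the skeleton construction of Section \ref{s:proof-sommable}. On $\bcM$, fix an optimal self-avoiding nice path $\pi$ realising $\bT^{\text{nice}}_\epsilon$ and list, in order of traversal, the open edges $\bw(1),\dots,\bw(p)$ crossed by $\pi$. For each $i$ let $a(i)$ be the lower-left endpoint of $\bw(i)$ (so that $\bw(i)=(a(i),a(i)+e_{j_i})$ for some $j_i$), set $a(0)=0$, $a(p+1)=\lfloor nz\rfloor$, and extract the monotone sub-sequence $a(i_0)\prec\cdots\prec a(i_q)$ by the greedy algorithm used for the sites $b(i)$ in Section \ref{s:proof-sommable}. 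Define $f(i)\in\Z^{d_2}$, $h(k)\in\N^{d_1}$, $g(k,i)\in\Z^{d_1}$ and $j_k\in\{1,\dots,d_1\}$ by the same formulas, with the $a(\cdot)$ replacing the $b(\cdot)$.

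Now I would derive the bond counterparts of Items 1--5 of Lemma \ref{l:reduction-chernov}. Items 1 and 3 transfer unchanged. Item 2 comes from combining the upper bound $\btau_\epsilon(\pi)\le\|\lfloor nz\rfloor^-\|_1$ (comparison with a deterministic shortest path) with the lower bound \emph{length of $\pi$ minus number of open edges crossed by $\pi$}, noting that the length of $\pi$ is at least $\|\lfloor nz\rfloor^-\|_1 + \sum_i\|f(i)\|_1$. Item 4 follows, as in the site case, from optimality of each sub-path of $\pi$ between consecutive skeleton edges. Item 5 uses $\btau_\epsilon(\pi) < \|\lfloor nz\rfloor^-\|_1 - \bS^\cD_\epsilon(\lfloor nz\rfloor) - \eta n\epsilon^{1/d_1}$ together with the inequality
\[
\bS^\cD_\epsilon(\lfloor nz\rfloor) \ge q-1 - \sum_{k=0}^{q-1}\|\projv(a(i_{k+1})-a(i_k))\|_1,
\]
obtained by feeding the skeleton sequence $(\bw(i_1),\dots,\bw(i_{q-1}))$ into $\bcD_\epsilon(0,\lfloor nz\rfloor)$. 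One then applies the union bound over $(q,\ell_\cdot,j_\cdot,f(\cdot),g(\cdot),h(\cdot))$, bounds each indicator by the same exponential used in the proof of Lemma \ref{l:sommable}, evaluates the resulting geometric sums via $K(s)=\sum_{a\in\Z}e^{-s|a|}$, and closes the estimate with the very same constants $(\alpha,\beta,\gamma)$ defined by \eqref{e:definition_constantes}.

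The main obstacle is the edge-specific bookkeeping in Items 2 and 5. In the bond model a vertical open edge plays two roles simultaneously: it contributes $+1$ to $\bR$ and forces one unit of vertical displacement on any path traversing it. One must therefore be careful not to double-count: the vertical contribution of $\bV$ aggregates the detours in $\Z^{d_2}$ between the (possibly edge-valued) projections $A(i)$ of the skeleton, while $\sum_i\|f(i)\|_1$ in Item 2 accounts for the vertical length of $\pi$ between the lower-left endpoints of consecutive open edges. A secondary, minor point is that the edge relation $\prec$ compares lower-left endpoints, which is precisely why the $a(i)$ above are the correct vertices for the monotony extraction. None of these adjustments changes the order of magnitudes in the Chernov estimates, so the same scheme of constants yields $\P[\bcM(z,\epsilon,\eta,n)] \le \epsilon^{-1}\exp[-n\epsilon^{1/d_1}]$ for $\epsilon<\epsilon_0(z,\eta)$, which completes the proof exactly as in Section \ref{s:proof-sommable}.
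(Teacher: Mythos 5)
Your approach is genuinely different from the paper's, so let me first say what the paper does. The paper does \emph{not} list open edges: it declares a \emph{site} $x$ to be open if any of the $d$ edges $(x,x+e_j)$, $j\in\{1,\dots,d\}$, is open (an event of probability $\bar\epsilon=1-(1-\epsilon)^d$), calls $x$ \emph{doubly open} if at least two such edges are open, lists the open sites visited by $\pi$, and carries an extra parameter $m$ (the number of doubly open skeleton sites) through the Chernov computation. This gives a bound on the number of open edges of $\pi$ of the form $q+m+1+2\sum_k\ell_k$, hence the factors of $2$ and the extra term $m$ in the bond analogues of Items~2, 4 and 5. Your plan keeps the edge objects themselves and extracts the skeleton from their lower-left endpoints, which is a legitimate alternative and conceptually cleaner, but it is not what the paper does.

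There are, however, two concrete gaps in your write-up. First, the inequality you cite for Item 5,
\[
\bS^\cD_\epsilon(\lfloor nz\rfloor)\ge q-1-\sum_{k=0}^{q-1}\|\projv(a(i_{k+1})-a(i_k))\|_1,
\]
does \emph{not} follow from feeding $(\bw(i_1),\dots,\bw(i_{q-1}))$ into $\bcD_\epsilon$: what you get is $\bS^\cD_\epsilon\ge q-1-\bV(\bw(i_1),\dots,\bw(i_{q-1}))$, and $\bV$ is in general \emph{strictly larger} than $\sum_k\|\projv(a(i_{k+1})-a(i_k))\|_1$ whenever some skeleton edge is vertical, because the path in $\Z^{d_2}$ must then traverse the edge $A(i_k)$ rather than merely touch its lower endpoint. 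So your stated bound is the wrong direction. The fix is to use $\bV$ both here and in the lower bound on the vertical detour of $\pi$ (the vertical projection of $\pi$ does pass through $A(i_1),\dots,A(i_{q-1})$ in order, so it has length at least $\bV$); the two $\bV$'s then cancel exactly as the two $V$'s do in the site case. Second, your union bound is over $(q,\ell_\cdot,j_\cdot,f,g,h)$ only, but the lower-left endpoints $a(1),\dots,a(p)$ do not determine the edges $\bw(1),\dots,\bw(p)$: one must also sum over the edge directions, contributing an extra factor of up to $d$ per edge, i.e.\ $d^{\,q-1+\sum\ell_k}$. This is of the same type as the site case's $\epsilon^{q-1+\sum\ell_k}$ factor (it effectively replaces $\epsilon$ by $d\epsilon$), so the method survives, but the constants $\alpha,\beta,\gamma$ must then solve a relation with an extra power of $d$ and cannot literally be those of \eqref{e:definition_constantes}. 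Once these two points are repaired, your scheme should close; note also that it implicitly relies on the fact that distinct edges of $\Z^d$ are independently open even if they share a lower-left endpoint (which the paper handles instead through its doubly-open bookkeeping), and it is worth stating that observation explicitly since consecutive $a(i)$'s may coincide.
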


The proof of Lemma \ref{a:reduction_oriented} is quite technical since we will again define 
for all $z \in (0,+\infty)^{d_1} \times \{0\}^{d_2}$, $\epsilon>0$, $\eta>0$ and $n$  the event
\[
\bcM(z,\epsilon,\eta,n)  = \{\bT^{\text{nice}}_\epsilon(0, \lfloor nz \rfloor) < \|\lfloor nz \rfloor^-\|_1 
- \bS^\cD_\epsilon(\lfloor nz \rfloor) - \eta n \epsilon^{1/d_1}\}
\]
and prove that 
 for all $\epsilon>0$ small enough (depending on $z$ and $\eta$),
\begin{equation}\label{e:M_araete}
\lim_{n\to \infty} \P[\bcM(z,\epsilon,\eta,n)] = 0.
\end{equation}

To prove this, we adapt the proof of Lemma \ref{l:sommable} to our new setting. We first say that a site $x\in \Z^d$ is \emph{open} if one of the $d$ edges $(x,x+e_j)$ is open for $(e_1,\ldots,e_d)$ the canonical basis of $\Z^d$. Note that, with this convention, each site is open independently of the others and with probability
$$\bar{\varepsilon}:=1-(1-\varepsilon)^d\sim d\varepsilon.$$
Moreover, we say that a site $x\in \Z^d$ is \emph{doubly open} if at least two edges $(x,x+e_j)$ with $j\in\{1,\ldots,d\}$ are open.
Hence, a site is doubly open with probability
$$1-(1-\varepsilon)^d-d(1-\varepsilon)^{d-1}\varepsilon\sim \frac{d(d-1)}{2}\varepsilon^2\le \bar{\varepsilon}^2 ,$$
for $\epsilon$ small enough.
Consider an optimal path $\pi$ between $0$ and $\lfloor nz \rfloor$ which is self avoiding.
Let $b(1),\ldots, b(p)$ be the open sites it goes through and, as in Section \ref{s:proof-sommable}, denote by $(b(i_k))_{0\le k\le q}$ the associated monotone sub-sequence and $\ell_k=i_{k+1}-i_k-1$ the number of open sites between two consecutive terms of this monotone sub-sequence. Denote also by
 $m$ the number of 
\emph{doubly open} sites in the sub-sequence $(b(i_k))_{1\le k\le q-1}$. Let $\pi_k$ be the restriction of $\pi$ from $b(i_k)$ to $b(i_{k+1})$. Let us remark that if an edge is open, by definition, one of its extremity is open. Moreover, any given site is an extremity of at most two edges of a given self-avoiding path. Thus,
without counting the first and last step, the number of open edges $\pi_k$ goes through is at most $2\ell_k$ (and $2(\ell_k+1)$ if we also count the first and last step). Besides,  $q+1+m$ is an upper bound on the number of open edges of the path touching one of the sites $(b(i_k))_{1\le k\le q-1}$. 
Hence, the total number of open edges $\pi$ goes trough is at most 
$$q+m+1+2\sum_{k=0}^{q-1} \ell_k\le 2(p+1).$$

From these remarks and using the same arguments as in the proof of Lemma \ref{l:reduction-chernov}, we can then easily establish the following result.

\begin{lemma}\label{l:reduction-chernov2}
Let $z\in(0,+\infty)^{d_1}\times\{0\}^{d_2}, \epsilon>0$ and $\eta>0$. Let $n \ge 1$ be large enough to ensure $0 \prec \lfloor nz \rfloor^-$.
On $\bcM(z,\epsilon,\eta,n)$ there exists $p,m\ge 0$ and  a sequence of open sites $b(1),\ldots,b(p)$ such that $m$ sites of the monotone sub-sequence $(b(i_k))_{1\le k\le q-1}$ are \emph{doubly open} and :
\begin{enumerate} 
\item $0 \le n\|z\|_1 - \sum_{k=0}^{q-1}\|h(k)\|_1$.
\item $0 \le 2(p+1)- \sum_{i=1}^{p} \|f(i)\|_1 $.
\item $0 \le -\sum_{i=1}^{\ell_k} g(k,i)_{j_k}$ for each $k \in \{0,\dots,q-1\}$.
\item $0 \le  
2(\ell_k+1) +  \sum_{1\le j \le d_1, j \neq j_k } h(k)_j-\sum_{i=1}^{\ell_k} \sum_{1\le j \le d_1, j \neq j_k }g(k,i)_j$
for each $k \in \{0,\dots,q-1\}$.
\item $0 \le 
-\sum_{k=0}^{q-1} \sum_{i=1}^{\ell_k} |g(k,i)_{j_k}| 
- \sum_{k=0}^{q-1} \sum_{1 \le j \le d_1 ,j \neq j_k} \sum_{i=1}^{\ell_k} \big(g(k,i)_j\big)_-
+m+1+ 2\sum_{k=0}^{q-1} \ell_k
 -\eta n \epsilon^{1/d_1}$.
 \end{enumerate}
\end{lemma}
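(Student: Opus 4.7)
The plan is to adapt the proof of Lemma \ref{l:reduction-chernov} to the bond setting, the single new ingredient being the combinatorial observation sketched just above the statement: for a self-avoiding path $\pi$, at most two edges of $\pi$ are incident to any given vertex, and any open edge of $\pi$ makes its ``positive'' endpoint (the vertex from which it emanates in a positive direction) an open site. Consequently, the number of open edges of $\pi$ is controlled by the number of open sites along $\pi$, with an extra factor of two, and more sharply by how many of the relevant sites are doubly open.

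The construction of the sequence is essentially the same as in Section \ref{s:proof-sommable}: on $\bcM(z,\epsilon,\eta,n)$ I would pick an optimal self-avoiding nice path $\pi$ from $0$ to $\lfloor nz\rfloor^-$, let $b(1), \ldots, b(p)$ be the open sites visited in order (with $b(0)=0$ and $b(p+1)=\lfloor nz\rfloor$), and define as before $(b(i_k))_{0\le k\le q}$, the vectors $h(k), f(i), g(k,i)$, the lengths $\ell_k$ and the directions $j_k$. Then $m$ is the number of $k \in \{1,\ldots,q-1\}$ such that $b(i_k)$ is doubly open. Items~1 and~3 depend only on the geometry of the skeleton and are unchanged from the site case: Item~1 uses $\sum_k h(k) = \projh(\lfloor nz\rfloor)$ together with $z_i\ge 0$, and Item~3 is a mere rewriting of the definition of $j_k$ in \eqref{e:properties_detours_1}.

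For Item~2 I would retain the upper bound $\btau_\epsilon(\pi)\le \|\lfloor nz\rfloor^-\|_1$ coming from comparison with a shortest nice path (the analogue of \eqref{e:geodesique1}), and combine it with the lower bound $\btau_\epsilon(\pi)\ge \|\lfloor nz\rfloor^-\|_1 + \sum_{i=1}^p \|f(i)\|_1 - 2(p+1)$, where the length of $\pi$ is at least $\|\lfloor nz\rfloor^-\|_1 + \sum_i \|f(i)\|_1$ and the number of open edges of $\pi$ is at most $2(p+1)$ by the observation above. Item~4 applies the same idea to each subpath $\pi_k$ from $c(k,0)$ to $c(k+1,0)$: the optimality comparison \eqref{e:unsens} is still valid for the bond version, while the lower bound \eqref{e:lautre} is adjusted so that the $\ell_k$ there becomes $2(\ell_k+1)$, since $\pi_k$ picks up at most $2\ell_k$ open edges strictly inside and at most $2$ more boundary edges.

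Item~5 is the main technical step. As in the site proof I would combine three ingredients: (i) the length of $\pi$ bounded below as in \eqref{e:ajoutM} (horizontal and vertical detours are defined in exactly the same way); (ii) the number of open edges of $\pi$ bounded above by $q+m+1+2\sum_k \ell_k$, where each singly open skeleton vertex $b(i_k)$ with $k\in\{1,\ldots,q-1\}$ contributes at most one open edge of $\pi$ (its unique open outgoing positive edge need not even lie on $\pi$), each of the $m$ doubly open ones contributes at most two, the interior open vertices of each detour block contribute at most $2\ell_k$, and the $+1$ absorbs boundary effects at $0$ and $\lfloor nz\rfloor$; (iii) on $\bcM$ the refined upper bound $\btau_\epsilon(\pi)<\|\lfloor nz\rfloor^-\|_1 - \bS^\cD_\epsilon(\lfloor nz\rfloor)-\eta n \epsilon^{1/d_1}$ together with the analogue $\bS^\cD_\epsilon(\lfloor nz\rfloor)\ge q - 1 - \sum_k \|\projv(c(k+1,0)-c(k,0))\|_1$ of \eqref{e:upper_time_score} obtained by testing the skeleton itself. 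Substituting and rearranging yields Item~5. The real subtlety is ingredient~(ii); once the coefficient $q+m+1+2\sum_k\ell_k$ is verified, the remaining algebra is a direct transcription of the site case.
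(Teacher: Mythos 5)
Your proposal follows the paper's own argument essentially line for line: the same choice of self-avoiding optimal nice path, the same extraction of the monotone skeleton, the same key observation that every open edge has its lower endpoint open and that a self-avoiding path meets each vertex in at most two edges, leading to the same tallies $2(p+1)$, $2(\ell_k+1)$ and $q+m+1+2\sum_k\ell_k$, and the same modifications to Items 2, 4 and 5. The paper's own exposition is comparably terse (it defers most of the work to ``the same arguments as in the proof of Lemma~\ref{l:reduction-chernov}''), so your level of detail and the content of the argument match the intended proof.
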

The differences with Lemma \ref{l:reduction-chernov} are that, in Item 2, a $p$ becomes a $2(p+1)$, in Item 4,  $\ell_k$ becomes $2(\ell_k+1)$ and in Item 5, $\sum_{k=0}^{q-1} \ell_k$ becomes $m+1+ 2\sum_{k=0}^{q-1} \ell_k$.

Now, the probability that a fixed sequence of sites  $b(1),\ldots,b(p)$ are open and   $m$ sites of the monotone sub-sequence $(b(i_k))_{1\le k\le q-1}$ are \emph{doubly open} is bounded, for $\varepsilon$ small enough, by 
$$\binom{q-1}{m}\bar{\varepsilon}^{p+m}.$$
So, now, we need to bound  a sum over $q,m, (\ell_k)_k, (j_k)_k, (f(i))_i, (g(k,i))_{k,i}$ and $(h(k))_{k}$  of
\begin{equation}\label{e:un_terme_arete}
\binom{q-1}{m}\bar{\varepsilon}^{m+q-1 + \sum \ell_k} \1_{\text{Item }1}\1_{\text{Item }2}\1_{\text{Item }4}\1_{\text{Item }5}.
\end{equation}

Using the same techniques as in Section \ref{s:proof-sommable}, we get the following bound for the sum $\bW (q,m,\ell_\cdot,j_\cdot)$ of \eqref{e:un_terme_arete} over $(f(i))_i, (g(k,i))_{k,i}$ and $(h(k))_{k}$:

\begin{align*}
\bW(q,m,\ell_\cdot,j_\cdot) \le
& \binom{q-1}{m}\bar{\varepsilon}^{m}e^{\alpha m}\\
& \frac {1} {\bar{\epsilon}} \exp\Big[2+ \alpha+\beta \bar{\epsilon}^{1/d_1} n\|z\|_1  -\alpha \eta n \bar{\epsilon}^{1/d_1}\Big] \\
& \left(\bar{\epsilon} e^2\exp[2\gamma \bar{\epsilon}^{1/d_1}]K(1)^{d_2}K(\beta \bar{\epsilon}^{1/d_1})K(\gamma  \bar{\epsilon}^{1/d_1})^{(d_1-1)}\right)^{q} \\
& \left(\bar{\epsilon} \exp[2\gamma \bar{\epsilon}^{1/d_1} + 2\alpha ]
e^2K(1)^{d_2}K(\alpha)\big(K(\gamma\bar{\epsilon}^{1/d_1})+K(\alpha-\gamma\bar{\epsilon}^{1/d_1})\big)^{d_1-1}\right)^{\sum_k \ell_k}.
\end{align*}

Thus, summing over $m$, we get

\begin{align*}
\sum_{m=0}^{q-1}\bW(q,m,\ell_\cdot,j_\cdot) \le  
& \frac {1} {\bar{\epsilon}} \exp\Big[2+ \alpha+\beta \bar{\epsilon}^{1/d_1} n\|z\|_1  -\alpha \eta n \bar{\epsilon}^{1/d_1}\Big] \\
& \left((1+\bar{\epsilon}e^\alpha)\bar{\epsilon} e^2\exp[2\gamma \bar{\epsilon}^{1/d_1}]K(1)^{d_2}K(\beta \bar{\epsilon}^{1/d_1})K(\gamma  \bar{\epsilon}^{1/d_1})^{(d_1-1)}\right)^{q} \\
& \left(\bar{\epsilon} \exp[2\gamma \bar{\epsilon}^{1/d_1} + 2 \alpha ]
e^2K(1)^{d_2}K(\alpha)\big(K(\gamma\bar{\epsilon}^{1/d_1})+K(\alpha-\gamma\bar{\epsilon}^{1/d_1})\big)^{d_1-1}\right)^{\sum_k \ell_k}
\end{align*}
which is roughly the same bound as the one obtain in \eqref{e:borneW}. Hence, the end of the proof is quite identical as in Section \ref{s:proof-sommable}. Choosing $\alpha, \beta$ and $\gamma$ (depending on $z$ and $\eta$) such that
\[
\beta = 2 \gamma , \quad e^2 K(1)^{d_2}2^{d_1} \beta^{-1}\gamma^{-(d_1 -1)}= \frac{1}{8d_1} \textrm{ and } \alpha \eta - \beta \|z\|_1 =1
\]
we obtain that, for all $\epsilon>0$ small enough (depending on $z$ and $\eta$),
\[
\mathbb{P} [\bcM(z,\epsilon,\eta,n)] \leq \frac{e^{2+\alpha}}{\bar{\epsilon}} \exp \left[ - \bar{\epsilon}^{1/d_1} n \right].
\]

\subsection{Proof of Theorem \ref{t2} and Proposition \ref{p}}

Theorem \ref{t2} is a direct consequence of Lemmas \ref{l:link_discrete_continous_arete} and \ref{a:reduction_oriented}, with
\begin{equation}
\label{e:expressionC2}
\bC (d_1,d_2) =  \bsigma^\cC_0(\1_{d_1})
\end{equation}
by \eqref{e:ajoutM2}. Moreover, $ \bsigma^\cC_0(\1_{d_1})$ is finite by Lemma \ref{l:encadrement_arete} and the fact that $ \sigma^\cC_0(\1_{d_1}) < \infty$ (see Lemma \ref{l:valuesigmac}).

The first part of Proposition \ref{p}, {\it i.e.}, the relation between the constants $C(d_1,d_2)$ and $\bC(d_1,d_2)$ is a straightforward consequence of Lemma \ref{l:encadrement_arete} considering the definition of the constants $C(d_1,d_2)$ and $\bC(d_1,d_2)$ given by \eqref{e:expressionC} and \eqref{e:expressionC2}. The last fact to check is the equality
\[
C(2,0) = 2\,.
\]
By \eqref{e:expressionC} we know that $C(2,0)= \sigma^\cC_0((1,1))$, the mean directional score in the semi-continuous oriented model associated to site Bernoulli first-passage percolation for $(d_1,d_2) = (2,0)$ as defined by Lemma \ref{l:sigmac} in Section \ref{s:continuous_model}. Since $d_2=0$, this model is in fact totally continuous and oriented, and it is solvable: it is a continuous Poissonization version of the discrete Ulam's problem described in the introduction, introduced by Hammersley \cite{Hammersley}. Hammersley conjectured $ \sigma^\cC_0((1,1))=2$, and this conjecture was proved first by Logan and Shepp and by Vershik and Kerov in 1977, and then in a more probabilistic way by Aldous and Diaconis \cite{AldousDiaconis}  in 1995, using the so-called Hammersley's line process. 

\appendix

\section{Proof of some standard results}

\subsection{Existence of the time constant in Bernoulli site first-passage percolation}
\label{s:proof-cte}

The time constant in Bernoulli site first-passage percolation can be defined for any $z \in \mathbb{R}^d\setminus \{0\}$ as 
\[
\mu_\epsilon (z) = \lim_{n\rightarrow \infty} \frac{T_\epsilon (0, \lfloor nz \rfloor )}{n} \quad \textrm{a.s. and in }L^1.
\]
The proof of this result is an exact copy of its more classical version in Bernoulli bond first-passage percolation, see for instance \cite{Kesten-saint-flour}. When $z\in \mathbb{Z}^d$, this is a straightforward consequence of Kingman ergodic subadditive theorem. The convergence can be extended to any $z\in \mathbb{Q}^d$ by homogeneity. Obtaining the convergence for any $z\in \mathbb{R}^d$ requires a little more work, that is standard but would require a few pages. For completeness of the paper, we make the choice to give an explicit proof of a much simpler result, namely the convergence of the expectations of the rescaled passage times to the time constant. The result has the double advantage to be easy to prove, and to give a rigorous definition of $\mu_\epsilon (z)$ that is sufficient for our study.

\begin{lemma}
\label{l:cte}
For any $z \in \mathbb{R}^d\setminus \{0\}$, the following limit is well defined:
\[
\mu_\epsilon (z) := \lim_{n\rightarrow \infty} \frac{1}{n} \mathbb{E} [T_\epsilon (0, \lfloor nz \rfloor )].
\]
\end{lemma}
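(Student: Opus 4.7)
The plan is to establish an almost-subadditivity property for the sequence $a_n := \mathbb{E}[T_\epsilon(0, \lfloor nz \rfloor)]$ and conclude via a Fekete-type argument. Three elementary ingredients suffice. First, a Lipschitz bound: for any $x, y, y' \in \mathbb{Z}^d$, $|T_\epsilon(x, y) - T_\epsilon(x, y')| \leq \|y - y'\|_1$, obtained by considering any path of minimal length between $y$ and $y'$ and using that each $\tau_\epsilon(\cdot) \leq 1$; in particular $a_n \leq n \|z\|_1$, so $(a_n/n)_n$ is bounded. Second, subadditivity of the travel time: $T_\epsilon(0, v+w) \leq T_\epsilon(0, v) + T_\epsilon(v, v+w)$, by concatenating an optimal path from $0$ to $v$ with an optimal path from $v$ to $v+w$ (the concatenation double-counts no vertex, thanks to the convention that the last site of a path does not contribute to its travel time). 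Third, translation invariance: $T_\epsilon(v, v+w)$ has the same distribution as $T_\epsilon(0, w)$ since $(\tau_\epsilon(x))_{x \in \mathbb{Z}^d}$ is i.i.d.

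Next I would exploit the elementary coordinate-wise identity
\[
\lfloor (n+m)z_i \rfloor - \lfloor n z_i \rfloor - \lfloor m z_i \rfloor \in \{0,1\},
\]
which implies that the vector $\lfloor (n+m)z \rfloor - \lfloor nz \rfloor - \lfloor mz \rfloor$ has $\ell_1$-norm at most $d$. Taking expectations in the subadditivity inequality with $v = \lfloor nz \rfloor$ and $v + w = \lfloor (n+m) z \rfloor$, then using stationarity to reduce $\mathbb{E}[T_\epsilon(v, v+w)]$ to $\mathbb{E}[T_\epsilon(0, w)]$, and finally the Lipschitz bound to compare this with $a_m$ at the cost of an additive error at most $d$, yields
\[
a_{n+m} \leq a_n + a_m + d.
\]
The shifted sequence $b_n := a_n + d$ is then genuinely subadditive, so Fekete's lemma yields convergence of $b_n/n$ to $\inf_n b_n/n \in [0, \|z\|_1 + d]$, and therefore $a_n/n = b_n/n - d/n$ converges to the same finite limit, which we define to be $\mu_\epsilon(z)$.

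No serious obstacle is expected: this is the standard Hammersley--Welsh--Kingman scheme, the only mild subtlety being the book-keeping of the additive error arising from the fact that $\lfloor (n+m)z \rfloor \neq \lfloor nz \rfloor + \lfloor mz \rfloor$ in general. This is painlessly absorbed by passing from $a_n$ to $b_n = a_n + d$ before applying Fekete.
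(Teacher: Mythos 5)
Your proof is correct and follows essentially the same argument as the paper's: both establish near-subadditivity $a_{n+m}\le a_n+a_m+d$ by exploiting translation invariance and the bound $\|\lfloor(n+m)z\rfloor-\lfloor nz\rfloor-\lfloor mz\rfloor\|_1\le d$, then apply Fekete to $a_n+d$. The only cosmetic difference is that you absorb the floor-function discrepancy via a Lipschitz bound on $T_\epsilon$ after a two-term triangle inequality, whereas the paper writes a three-term triangle inequality and bounds the extra passage time directly by $d$.
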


\begin{proof}
Fix some $z\in \R^d$ and let $u_n:=\E [T_\epsilon (0, \lfloor nz \rfloor )]$. By triangle inequality, we have
$$T_\epsilon (0, \lfloor (n+m)z \rfloor )\le T_\epsilon (0, \lfloor nz \rfloor )+T_\epsilon ( \lfloor nz \rfloor,\lfloor nz \rfloor+\lfloor mz \rfloor )+T_\epsilon (\lfloor nz \rfloor+\lfloor mz \rfloor, \lfloor (n+m)z \rfloor ).
$$
Thus, taking the expectation and using the invariance by translation, we get
$$u_{n+m}\le u_n+u_m+\E[T_\epsilon (0, \lfloor (n+m)z \rfloor-\lfloor nz \rfloor-\lfloor mz \rfloor )].$$
One can easily check that 
$$\|\lfloor (n+m)z \rfloor-\lfloor nz \rfloor-\lfloor mz \rfloor \|_\infty\le 1$$ 
and so, since the passage time at each site is bounded by 1, we get that 
$$T_\epsilon (0, \lfloor (n+m)z \rfloor-\lfloor nz \rfloor-\lfloor mz \rfloor )\le d.$$
Thus, $(u_n+d)_{n\ge 0}$ is a subadditive sequence and Fekete's Lemma implies that 
$$\mu_\epsilon (z) := \lim_{n\rightarrow \infty} \frac{1}{n} \mathbb{E} [T_\epsilon (0, \lfloor nz \rfloor )]=\lim_{n\rightarrow \infty} \frac{u_n}{n}=\lim_{n\rightarrow \infty} \frac{u_n+d}{n}$$ exists.
\end{proof}

\subsection{Proof of Lemma \ref{l:path-nice}}
\label{s:proof-path-nice}

Let $z \in (0,+\infty)^{d_1} \times \{0\}^{d_2}$ and $n \in \mathbb{N}^*$. From any nice path from $0$ to $\lfloor nz\rfloor^-$ we get a path from $0$ to $\lfloor nz\rfloor$
by adding $d_1$ steps. The travel time of the path increases by at most $d_1$. 
From this observation and by definition of $ \mu_\epsilon(z)$ (see Lemma \ref{l:cte}) one gets
\[
\mu_\epsilon(z) = \lim_{n \to \infty} \frac 1 n \E[T_\epsilon(0,\lfloor nz \rfloor)]
 \le \liminf_{n\to \infty} \frac 1 n \E[T^{\text{nice}}_\epsilon(0,\lfloor nz\rfloor)].
\]

To prove Lemma \ref{l:path-nice}, it remains to prove that $\mu_\epsilon(z) \geq  \limsup_n \E[T^{\text{nice}}_\epsilon(0,\lfloor nz\rfloor)] /n$.
Let $\epsilon>0$.
Let $\eta \in (0,1)$.
By definition of $ \mu_\epsilon(z)$ (see Lemma \ref{l:cte}), we can fix $p\ge 1$ such that
\[
\frac 1 p \E[T_\epsilon(0, \lfloor pz \rfloor )] \le \mu_\epsilon(z) + \eta
\]
and
\begin{equation}\label{e:pgrand2}
(1-\eta) pz \preceq \lfloor pz \rfloor
\end{equation}
and 
\begin{equation}\label{e:pgrand3}
\frac{d_1}p \le \eta.
\end{equation}
For $M \ge 1$ and $x \prec y$ in $\Z^{d_1} \times \{0\}^{d_2}$ we say that $\pi=(w(0),\dots,w(n))$ is a $M$-path from $x$ to $y$ if 
$\pi$ is a path from $x$ to $y$ and, for all $i \in \{0,\dots,n\}$,
\[
w(i) \preceq y + M \1_{d_1}.
\]
We denote by $T_\epsilon^M(x,y)$ the infimum of travel times $\tau_\epsilon(\pi)$ over all $M$-paths $\pi$ from $x$ to $y$.
By dominated convergence, we get
\[
\lim_{M \to +\infty} \E[T_\epsilon^M(0, \lfloor pz \rfloor )] = \E[T_\epsilon(0, \lfloor pz \rfloor )].
\]
Therefore we can fix $M$ such that
\[
\frac 1 p \E[T_\epsilon^M(0, \lfloor pz \rfloor )] \le \mu_\epsilon(z)+2\eta.
\]
Let now $n$ be a large integer. Let $q$ be the largest integer such that
\begin{equation}\label{e:q}
q \lfloor pz \rfloor + M\1_{d_1} \preceq \lfloor nz \rfloor^-.
\end{equation}
Define $r \in (\N^*)^{d_1}\times\{0\}^{d_2}$ by
\[
\lfloor nz \rfloor^- = q \lfloor pz \rfloor + r.
\]
Gluing a $M$-path from $0$ to $\lfloor pz \rfloor$,
a $M$-path from $\lfloor pz \rfloor$ to $2\lfloor pz \rfloor$,
and so on until a $M$-path from $(q-1)\lfloor pz \rfloor$ to $q\lfloor pz \rfloor$ 
and then any shortest path (in number of edges) from $q\lfloor pz \rfloor$ to $\lfloor nz\rfloor^-$,
we get a nice path from $0$ to $\lfloor nz\rfloor^-$.
Optimizing on the $M$-paths, taking expectation, using stationarity and bounding the travel time of the last part of the path
by its length, we get
\[
\E[T^{\text{nice}}_\epsilon(0,\lfloor nz\rfloor)] \le q \E[T_\epsilon^M(0, \lfloor pz \rfloor )] + \|r\|_1.
\]
Thus
\begin{align}
\frac 1 n \E[T^{\text{nice}}_\epsilon(0,\lfloor nz\rfloor)]
 &  \le \frac q n \E[T_\epsilon^M(0, \lfloor pz \rfloor )] + \frac{\|r\|_1}n \nonumber \\
 &  \le \frac {pq} n (\mu_\epsilon(z)+2\eta) + \frac{\|r\|_1}n. \label{e:majETnice}
\end{align}
The desired inequality $\mu_\epsilon(z) \geq  \limsup_n \E[T^{\text{nice}}_\epsilon(0,\lfloor nz\rfloor)] /n$ will follow from \eqref{e:majETnice} as expected, but proving this implication requires a few lines. From \eqref{e:q} we get
\begin{equation}\label{e:blop}
q = \min_{i \in \{1,\dots,d_1\}} \frac{ \lfloor nz \rfloor_i - 1 - M}{\lfloor pz \rfloor_i}.
\end{equation}
Using \eqref{e:pgrand2} we deduce, for $n$ large enough, 
\begin{equation}\label{e:majq}
q \le \min_{i \in \{1,\dots,d_1\}} \frac{ nz_i }{(1-\eta)pz_i} = \frac{n}{p(1-\eta)}.
\end{equation}
From \eqref{e:blop} we also get, for $n$ large enough,
\[
q \ge \min_{i \in \{1,\dots,d_1\}} \frac{ nz_i  - 2 - M}{pz_i} = \frac n p - \max_{i\in\{1,\dots,d_1\}} \frac{2+M}{p z_i}
\]
and then
\begin{align*}
r & = \lfloor nz \rfloor^- - q\lfloor pz \rfloor \\
& \preceq nz - \frac n p \lfloor pz \rfloor + \max_{i\in\{1,\dots,d_1\}} \frac{2+M}{p z_i}\lfloor pz \rfloor \\
& \preceq nz - \frac n p pz + \frac n p \1_{d_1} + \max_{i\in\{1,\dots,d_1\}} \frac{2+M}{p z_i}\lfloor pz \rfloor \\
& \preceq \frac n p \1_{d_1} + \max_{i\in\{1,\dots,d_1\}} \frac{2+M}{p z_i}\lfloor pz \rfloor
\end{align*}
and thus
\begin{equation}\label{e:majr1}
\|r\|_1 \le \frac {nd_1}p + \max_{i\in\{1,\dots,d_1\}} \frac{2+M}{p z_i} \|\lfloor pz \rfloor\|_1.
\end{equation}
From \eqref{e:majETnice}, \eqref{e:majq}, \eqref{e:majr1} and \eqref{e:pgrand3} we get
\[
\limsup_{n \to \infty} \frac 1 n \E[T^{\text{nice}}_\epsilon(0,\lfloor nz\rfloor)] \le \frac{\mu_\epsilon(z)+2\eta}{1-\eta}  + \eta.
\]
As this holds for any $\eta>0$ we get
\[
\limsup_{n\to \infty} \frac 1 n \E[T^{\text{nice}}_\epsilon(0,\lfloor nz\rfloor)] \le \mu_\epsilon(z).
\]
This proves the lemma.
 \qed

\paragraph{Aknowledgements :} Research was partially supported by the ANR project PPPP (ANR-16-CE40-0016) and the Labex MME-DII (ANR 11-LBX-0023-01).

\bibliographystyle{plain}

\end{document}